\begin{document}
	
\theoremstyle{plain}
\newtheorem{lemma}{Lemma}
\numberwithin{lemma}{section}
\newtheorem{proposition}[lemma]{Proposition}
\newtheorem{corollary}[lemma]{Corollary}
\newtheorem{theorem}[lemma]{Theorem}
	
\theoremstyle{definition}
\newtheorem{definition}[lemma]{Definition}
\newtheorem{example}[lemma]{Example}

\theoremstyle{remark}
\newtheorem{remark}[lemma]{Remark}

\newcommand{\period}{\text{.}}
\newcommand{\comma}{\text{,}}

\newcommand{\rca}{\textsf{RCA}}
\newcommand{\wkl}{\textsf{WKL}}
\newcommand{\aca}{\textsf{ACA}}
\newcommand{\atr}{\textsf{ATR}}
\newcommand{\pa}{\textsf{PA}}
\newcommand{\idone}{\textsf{ID}_1}
\newcommand{\pica}{\Pi^1_1\textsf{-CA}_{0}}
\newcommand{\picaminus}{\Pi^1_1\textsf{-CA}^{-}_{0}}
\newcommand{\pitr}{\Pi^1_1\textsf{-TR}_{0}}
\newcommand{\deltaonecr}{\Delta^1_1\textsf{-CR}}
\newcommand{\deltaoneca}{\Delta^1_1\textsf{-CA}}
\newcommand{\zfc}{\textsf{ZFC}}
\newcommand{\on}{\textsf{On}}
\newcommand{\wo}{\textsf{WO}}
\newcommand{\lo}{\textsf{LO}}
\newcommand{\set}{\textsf{Set}}
\newcommand{\dil}{\textsf{Dil}}
\newcommand{\Lim}{\textsf{Lim}}
\newcommand{\id}{\textsf{Id}}
\newcommand{\n}{\mathbb{N}}
\newcommand{\z}{\mathbb{Z}}
\newcommand{\q}{\mathbb{Q}}
\newcommand{\rr}{\mathbb{R}}
\newcommand{\supp}{\textsf{supp}}
\newcommand{\fin}{\textsf{fin}}
\newcommand{\bachmann}{\vartheta(\varepsilon_{\Omega+1})}
\newcommand{\smallveblen}{\vartheta(\Omega^\omega)}
\newcommand{\phiomega}{\varphi(\omega, 0)}
\newcommand{\psiOmegaomega}{\psi(\Omega^\omega)}
\newcommand{\ot}{\textsf{OT}(\vartheta)}
\newcommand{\rng}{\textsf{rng}}

\newcommand{\restr}[2]{#1 \mathbin{\upharpoonright} #2}

\title{On inverse Goodstein sequences}
\author[P. Uftring]{Patrick Uftring}

\address{Patrick Uftring, University of W\"urzburg, Institute of Mathematics, Emil-Fischer-Stra{\ss}e~40, 97074 W\"urzburg, Germany}
\email{patrick.uftring@uni-wuerzburg.de}
\thanks{This work has been funded by the Deutsche Forschungsgemeinschaft (DFG, German Research Foundation) -- Project number 460597863.}
\subjclass{03B30, 03F35, 03F15, 03E10}
\keywords{Goodstein sequence, dilators, Bachmann-Howard ordinal, ordinal collapsing function, well ordering principles, reverse mathematics, $\Pi^1_1$-comprehension}

\begin{abstract}
	In the late 1980s, Abrusci, Girard and van de Wiele defined a variant of Goodstein sequences: the so-called inverse Goodstein sequence. In their work, they show that it terminates precisely at the Bachmann-Howard ordinal. This reveals that a proof of this fact requires substantial consistency strength.
	Moreover, the authors could show that sequences of this kind terminate even if the hereditary base change at the heart of their construction is replaced by a generalization using arbitrary dilators.
	
	It has been a conjecture by Andreas Weiermann that this more general result has a connection to Bachmann-Howard fixed points and is, therefore, equivalent to one of the most famous strong set existence principles from reverse mathematics: $\Pi^1_1$-comprehension. In this article, we prove this conjecture to be correct.
	Moreover, we show that the ordinal at which such sequences terminate is, in a fundamental way, isomorphic to the $1$-fixed point of their dilator, a new concept introduced by Freund and Rathjen. This yields explicit notation systems and a general method for specifying such ordinals.
	
	Also, using the notation systems provided by $1$-fixed points, we can reproduce the result that the Goodstein sequence terminates at the Bachmann-Howard ordinal in a weak system. Additionally, we perform a similar computation for a variant of Goodstein sequences, which terminates at a predicative~ordinal.
\end{abstract}

\maketitle

\section{Introduction}

One of the most fascinating insights of proof theory lies in the fact that there are number theoretic statements that require surprisingly strong principles in order to be proven. An example for such a statement is given by the \emph{termination of Goodstein sequences.}

In the 1940s, Goodstein devised the following construction (cf.~\cite{Goodstein44}): Take an arbitrary natural number (e.g.~$5$) and express it as a term in \emph{hereditary base-$2$ notation} ($2^{2^{2^0}} + 2^0$), i.e., a variant of base-$2$ notation where also all exponents are, hereditarily, given in base-$2$ notation. The resulting term only consists of numbers less or equal to $2$ combined by the operations of addition, multiplication, and exponentiation. Now, replace all occurrences of $2$ by $3$ ($3^{3^{3^0}} + 3^0$), evaluate this term ($28$) and subtract $1$ ($27$). Let us summarize this construction:
\begin{equation*}
	5 \quad \rightsquigarrow \quad 2^{2^{2^0}} + 2^0 \quad \rightsquigarrow \quad 3^{3^{3^0}} + 3^0 \quad \rightsquigarrow \quad 28 \quad \rightsquigarrow \quad 27\period
\end{equation*}
In the next step, we apply all of this to $27$ but instead of hereditary base-$2$ notation, we interpret $27$ as a term in hereditary base-$3$ notation and replace all occurrences of $3$ by $4$. In the following steps, we continue this pattern: If we are in the $n$-th step of our construction, we compute the hereditary base-$(n+1)$ notation of our number, replace all occurrences of $n+1$ by $n+2$, subtract $1$ from the result of the evaluation and repeat the process. We collect the numbers produced by each step in a sequence, called the \emph{Goodstein sequence} for our given starting value.

It is no surprise that for sufficiently large starting values, this construction yields greater and greater elements in each next step and the subtraction of $1$ is practically negligible; the effect of incrementing the hereditary base is simply too strong. However, Goodstein could prove the surprising result that his construction must always terminate, i.e.~reach $0$, for any starting value. In fact, he considered an even broader statement that allowed for a large class of hereditary base substitutions.

The usual proof assigns an element of the ordinal $\varepsilon_0$ to the result of each step and proves that the sequence in $\varepsilon_0$ that emerges by this process must be strictly descending. Thus, if some Goodstein sequence does not terminate, this yields the contradiction that $\varepsilon_0$ is ill founded.

Proof theorists, especially those that work in the field of \emph{reverse mathematics}, ask how strong statements of second order arithmetic like this are. To be more precise: Which axioms do we need to add to our reasoning in order to prove, e.g, the termination of Goodstein sequences? Of course, working in $\zfc$ does not require any additional axioms since the fact that $\varepsilon_0$ exists as an ordinal is a standard result of most common set theories. Therefore, logicians in reverse mathematics consider very weak logical systems like $\aca_0$, which only consist of basic axioms about natural numbers and weak inductive principles.

Kirby and Paris could show that $\pa$ (a system that is very similar to $\aca_0$) is not strong enough to prove that Goodstein sequences, as we presented them here, terminate for arbitrary starting values (cf.~\cite{KP82}, see also \cite[Theorem~2.6]{Rathjen15} for a more general version). However, $\aca_0$ is (by far) not the strongest system that reverse mathematics provides and already $\atr_0$, the next of the so-called \emph{Big Five}, easily proves Goodstein's original result. Thus, a natural question that we may ask is the following: Is there a way to reformulate the termination of Goodstein sequences so that its proof needs even stronger principles than what $\atr_0$ does provide?\footnote{It should be noted that the following approach is not the only positive answer to this question. See \cite{ADWW20} for a different variant of Goodstein sequences, whose termination also cannot be proven in~$\atr_0$.}

In \cite{Girard81}, Girard introduced so-called \emph{dilators}. These are endofunctors on the category of ordinals $\on$ that preserve pullbacks and direct limits. One can show that there is a unique dilator $G: \on \to \on$ that naturally extends the substitution of hereditary bases: Each morphism $G(\iota_{n}^{n+1})$ realizes the substitution of base $n+1$ by $n+2$ for any positive natural number $n \in \n$, where $\iota_{n}^{n+1}$ is the canonical embedding from $n$ to $n+1$. In the following, we call $G$ the \emph{Goodstein dilator}. Thus, we can define any Goodstein sequence $(g_n)_{n \in \n}$ with starting value $m \in \n$ in the following compact form:
\begin{itemize}
	\item $g_0 := m$,
	\item $g_{n+1}$: If $g_n > 0$, then we set $g_{n+1} := G(\iota_{n+1}^{n+2})(g_n) - 1$. Otherwise, if $g_n = 0$, we set $g_{n+1} := 0$.
\end{itemize}
From here, Abrusci, Girard, and van de Wiele (cf.~\cite{Abrusci87, AGV90}) constructed a procedure that makes use of the fact that $G$ is defined on \emph{all} ordinals: Instead of subtracting~$1$ after the base change, they \emph{add} $1$. Of course, with this modification our sequence can never reach $0$. Thus, we now consider the sequence to be terminated if a certain upper bound is reached:
\begin{definition}\label{def:zfc_inverse_d_sequence}
	For every dilator $D$ and every ordinal $\alpha$, we define $(A_{\alpha + \gamma}^{D, \alpha})_{\gamma \in \on}$ the \emph{increasing $D$-sequence with respect to $\alpha$} as follows: For all $\gamma \geq \alpha$, we have
	\begin{equation*}
		A_{\gamma}^{D, \alpha} :=
		\begin{cases}
			\sup_{\alpha \leq \delta < \gamma} (D(\iota_{\delta}^{\gamma})(A_{\delta}^{D, \alpha}) + 1) & \text{ if } A_{\delta}^{D, \alpha} < D(\delta) \text{ for all } \delta \text{ with } \alpha \leq \delta < \gamma \comma\\
			D(\gamma) & \text{ otherwise,}
		\end{cases}
	\end{equation*}
	where $\iota_\delta^\gamma$ denotes the canonical embedding from $\delta$ to $\gamma$.
\end{definition}
We say that the sequence terminates if we have reached an ordinal $\gamma$ satisfying $A_{\gamma}^{D, \alpha} = D(\gamma)$. We also call the increasing $G$-sequence (where $G$ is the Goodstein dilator) the \emph{inverse Goodstein sequence}.

We define a function that yields the ordinals at which our sequences terminate:
\begin{definition}
	We define a partial function $\nu : \subseteq\dil \times \on \to \on$ such that for any dilator $D$ and ordinal $\alpha$, $\nu(D, \alpha)$ is the smallest $\gamma \geq \alpha$ satisfying $A_{\gamma}^{D, \alpha} = D(\gamma)$ if it exists.
\end{definition}
The first main result by Abrusci, Girard, and van de Wiele is the following:
\begin{theorem}[$\zfc$]\label{thm:d_seq_terminates}
	For any dilator $D$, there is some (unique) ordinal $\alpha$ such that $\nu(D, 0) = \alpha$ holds.
\end{theorem}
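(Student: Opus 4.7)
The plan is to argue by contradiction: assume that $A_\gamma^{D,0} < D(\gamma)$ for every ordinal $\gamma$ (so the sequence never terminates), and then exhibit an ordinal at which the construction is nevertheless forced to catch up to $D$.

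First I would produce a suitable ordinal $\kappa$. Since $D$ is a dilator, it preserves direct limits and sends order embeddings to order embeddings, so the class function $\gamma \mapsto D(\gamma)$ on $\on$ is normal (strictly increasing and continuous at limits). A standard fixed-point iteration then yields a limit ordinal $\kappa > 0$ with $D(\kappa) = \kappa$.

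Next I would study the auxiliary function $f : \kappa \to \kappa$ given by $f(\delta) := D(\iota_\delta^\kappa)(A_\delta^{D,0})$, which is well-defined by our assumption since $A_\delta^{D,0} < D(\delta)$ implies $f(\delta) < D(\kappa) = \kappa$. For $\delta_1 < \delta_2 < \kappa$, the recursive clause applied at $\gamma = \delta_2$ gives $D(\iota_{\delta_1}^{\delta_2})(A_{\delta_1}^{D,0}) + 1 \leq A_{\delta_2}^{D,0}$, so in particular $D(\iota_{\delta_1}^{\delta_2})(A_{\delta_1}^{D,0}) < A_{\delta_2}^{D,0}$. Applying the order embedding $D(\iota_{\delta_2}^\kappa)$ and using functoriality $D(\iota_{\delta_2}^\kappa) \circ D(\iota_{\delta_1}^{\delta_2}) = D(\iota_{\delta_1}^\kappa)$ then shows $f(\delta_1) < f(\delta_2)$. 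A routine transfinite induction then gives $f(\delta) \geq \delta$ for every $\delta < \kappa$.

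Finally I would evaluate $A_\kappa^{D,0}$ directly. Each $f(\delta) + 1 \leq \kappa$ bounds the supremum by $\kappa$, while $f(\delta) \geq \delta$ together with $\kappa$ being a limit forces $\sup_{\delta < \kappa}(f(\delta) + 1) \geq \sup_{\delta < \kappa}(\delta + 1) = \kappa$. Hence $A_\kappa^{D,0} = \kappa = D(\kappa)$, contradicting the non-termination assumption. Uniqueness of the value $\nu(D,0)$ is immediate from its definition as the least witness. The only delicate ingredient is the existence of the fixed point $\kappa$, but this is a standard consequence of the normality of the function induced by $D$ on $\on$ and poses no obstacle in $\zfc$.
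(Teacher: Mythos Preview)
Your argument contains a genuine gap at the very first step: the claim that the class function $\gamma \mapsto D(\gamma)$ is normal is false. Preservation of direct limits as a \emph{functor} does not imply that the induced object function on ordinals is continuous. The simplest counterexample is the dilator $D(X) = X + 1$: for a limit ordinal $\lambda$ one has $D(\lambda) = \lambda + 1$, while $\sup_{\gamma < \lambda} D(\gamma) = \sup_{\gamma < \lambda}(\gamma + 1) = \lambda$. The point is that the connecting maps $D(\iota_\gamma^{\gamma'})$ send the top element of $\gamma + 1$ to the top element of $\gamma' + 1$, so the colimit of the diagram really is $\lambda + 1$; but this is invisible if you only look at the objects. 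In particular there is no ordinal $\kappa$ with $D(\kappa) = \kappa$ for this dilator, so the fixed point on which your whole construction rests need not exist. (The function $\gamma \mapsto D(\gamma)$ is not strictly increasing either --- the constant dilator $D \equiv \beta$ shows this --- but continuity is the part you actually need for the fixed-point iteration.)

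The rest of your argument is fine: if one \emph{does} have a limit ordinal $\kappa$ with $D(\kappa) = \kappa$, the strictly increasing map $f(\delta) = D(\iota_\delta^\kappa)(A_\delta^{D,0})$ and the computation $A_\kappa^{D,0} = \kappa$ go through exactly as you wrote. The paper's two proofs avoid the non-existent fixed point in different ways. The first (via Fodor's Lemma) picks a regular uncountable $\kappa$ with $D(\alpha) < \kappa$ for all $\alpha < \kappa$ --- this is Lemma~\ref{lem:dilator_below_kappa} and does \emph{not} require $D(\kappa) = \kappa$ --- and then uses the support function together with Fodor to trap unboundedly many sequence values in a single $D(\delta)$. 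The second proof uses a Bachmann-Howard fixed point of $D$, collapsing the sequence $\sigma_\beta = D(\iota_\beta^\alpha)(A_\beta^{D,0})$ through $\vartheta$ and deriving a contradiction from the existence of an upper bound $\tau$.
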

Moreover, they explicitly computed the point at which the inverse Goodstein sequence terminates.
\begin{proposition}[$\zfc$]\label{prop:goodstein_bachmann}
	Let $G$ be the Goodstein dilator. We have the equality $\nu(G, 0) = \bachmann$, where $\bachmann$ is the Bachmann-Howard ordinal.
\end{proposition}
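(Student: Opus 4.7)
My plan is to prove the two inequalities $\nu(G, 0) \leq \bachmann$ and $\nu(G, 0) \geq \bachmann$ separately, using the Bachmann--Howard collapsing function $\vartheta$ as the key bookkeeping device for translating between stages of the recursion and elements of $G(\gamma)$.

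The first step is to understand the Goodstein dilator on uncountable inputs. By functoriality of $G$ and the fact that hereditary base-$\Omega$ notation iterates $\Omega$-exponentiation, one expects (and should verify directly) that $G(\Omega) = \varepsilon_{\Omega + 1}$. The defining dilator properties (preservation of direct limits and pullbacks) then imply that every element of $G(\gamma)$ is of the form $G(\iota_\delta^\gamma)(a)$ for some minimal ``support'' ordinal $\delta$ and some $a \in G(\delta)$; this support datum is the main tool for tracking where a given value of the inverse sequence lies within $G(\gamma)$.

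For the upper bound $\nu(G, 0) \leq \bachmann$, I would show that $A_{\bachmann}^{G, 0} = G(\bachmann)$. The strategy is to construct, from the recursion, a surjection from the stages $\gamma < \bachmann$ onto the elements of $G(\bachmann)$, exploiting the fact that $\bachmann = \vartheta(\varepsilon_{\Omega+1})$ is closed under the ordinal operations appearing in hereditary base expressions (sum, product, base-$\bachmann$ exponentiation). The $+1$ in Definition~\ref{def:zfc_inverse_d_sequence} guarantees that every ordinal below $G(\bachmann)$ is eventually surpassed. For the lower bound $\nu(G, 0) \geq \bachmann$, I would prove by transfinite induction on $\gamma < \bachmann$ that $A_\gamma^{G, 0} < G(\gamma)$, by bounding $A_\gamma^{G, 0}$ from above in terms of a countable $\vartheta$-value that tracks the recursion, and exhibiting a witness element of $G(\gamma)$ strictly above this bound.

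The main obstacle is the correspondence between the countable recursion of Definition~\ref{def:zfc_inverse_d_sequence} and the uncountable parameters ($\Omega$ and $\varepsilon_{\Omega+1}$) needed to analyze $G$ on large inputs. The argument is essentially a miniature ordinal analysis: one has to show that the recursion synchronizes precisely with the $\vartheta$-notation system of $\bachmann$, stabilizing neither too early nor too late. The ``$1$-fixed point'' technology announced in the abstract is designed to automate exactly this bookkeeping; a direct proof would need to translate between the inverse Goodstein dynamics and $\vartheta$-notations by hand.
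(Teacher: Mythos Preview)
Your outline is in the spirit of the original Abrusci--Girard--van de Wiele argument, but it differs substantially from the route the paper takes, and a few of your formulations would need tightening before they could be made to work.

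The paper does not carry out a direct transfinite induction on the values $A_\gamma^{G,0}$. Instead, it proves the second-order version (Proposition~\ref{prop:goodstein_bachmann_second_order}) by identifying the termination point with the $1$-fixed point $\psi_1(G)$ (Theorem~\ref{thm:d_sequence_fp_is_1_fp}) and then constructing an equimorphism between $\psi_1(G)$ and $\bachmann$. One direction uses that a suborder of $\bachmann$ (the $\vartheta$-terms in an $\Omega$-normal form with collapsed coefficients) is an \emph{initial} Bachmann--Howard fixed point of $G$, and that $\psi_1(\omega \circ G)$---which is always a Bachmann--Howard fixed point by \cite[Theorem~4.2]{FR21}---embeds into $\psi_1(G)$ (Lemma~\ref{lem:subset_of_psi_G_is_fp_of_omega_G}). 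The other direction uses \cite[Theorem~4.1]{FR21} to find a $1$-fixed point inside any Bachmann--Howard fixed point. The set-theoretic statement then follows from Lemma~\ref{lem:zfc_fp_is_soa_fp} and uniqueness (Corollary~\ref{cor:strong_fp_unique}). So the paper's proof is a chain of abstract fixed-point comparisons rather than a stage-by-stage analysis of the sequence.

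On the substance of your plan: for the upper bound, ``a surjection from the stages onto $G(\bachmann)$'' is not the right target; what you need is that the sequence $\gamma \mapsto G(\iota_\gamma^{\bachmann})(A_\gamma^{G,0})$ is cofinal in $G(\bachmann)$, and closure of $\bachmann$ under base-$\bachmann$ arithmetic does not by itself give this---you still have to control where the sequence actually lands. The paper obtains this direction essentially for free from the Bachmann--Howard fixed-point proof of Theorem~\ref{thm:d_seq_terminates} (see Remark~\ref{rem:bachmann_not_exact_introduction}): once you know $\bachmann$ is a Bachmann--Howard fixed point of $G$, that proof gives $\nu(G,0) \leq \bachmann$ immediately. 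For the lower bound, your description (``bound $A_\gamma^{G,0}$ by a $\vartheta$-value tracking the recursion'') is correct in outline but is exactly the hand-crafted bookkeeping that the $1$-fixed-point machinery is meant to replace; carrying it out directly amounts to redoing the AGV computation, which the paper avoids.
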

Since $\bachmann$ is much larger than the proof theoretic ordinal (see \cite{rathjen99} for an introduction to this topic) of $\atr_0$, this entails that $\atr_0$ is not able to prove this result. In fact, they showed this property for $\idone$, a system that is closely related to $\picaminus$, which is a weakened version of $\pica$, the strongest of the Big Five.

From the standpoint of reverse mathematics, these results are not completely satisfying: The deep insight of this field is that most natural theorems of mathematics are (over a weak base system of $\rca_0$) equivalent to one of the Big Five: $\rca_0$, $\wkl_0$, $\aca_0$, $\atr_0$, or $\pica$.

In second order arithmetic, the termination of the inverse Goodstein sequence can be formulated as a so-called $\Pi^1_1$-statement (``for all sets, some arithmetical statement holds''). Logic tells us that, over $\rca_0$, statements of this form cannot be equivalent to any of the Big Five (except for $\rca_0$ itself). Of course, we can already rule out all four systems below $\pica$ since $\atr_0$ cannot prove the termination. However, even a $\Pi^1_2$-statement (``for all sets, there exists a set such that some arithmetical statement holds'') is not enough for an equivalence with $\pica$ (see~\cite{Marcone96}).

A natural form of statements that \emph{can} be equivalent to such stronger set existence principles as $\pica$ is given by a quantification over dilators. An instance of such a statement is the following: A certain linear order $\vartheta(D)$, called the \emph{Bachmann-Howard fixed point of $D$}, is well founded for any dilator $D$. In fact, this is equivalent to $\pica$ over $\rca_0$ (cf.~\cite{Freund18Higher, Freund18PhD, FreundCategorical, Freund19, Freund20}). Clearly, Theorem~\ref{thm:d_seq_terminates} is also a (non-trivial) statement that quantifies over dilators. Hence, it has been conjectured by Andreas Weiermann that this theorem might also be equivalent to $\pica$ over $\rca_0$. In fact, we will see that it is a consequence of this statement about well founded Bachmann-Howard fixed points (see Section~\ref{sec:set_theory}). But we will also see that these fixed points do not always give precise bounds on $\nu(D, 0)$ for dilators~$D$.

The definition of Bachmann-Howard fixed points is based, as the name suggests, on a certain notation system for the Bachmann-Howard ordinal that uses the collapsing function $\vartheta$, which was used by Rathjen and Weiermann to show the characterization of Kruskal's Theorem using the Ackermann ordinal (cf.~\cite{RW93}). However, this is not the only possible way to notate this ordinal: In \cite{Buchholz86}, Buchholz uses a system that employs a different collapsing function $\psi$. In their recent publication~\cite{FR21}, Freund and Rathjen defined $\nu$-fixed points $\psi_\nu(D)$ for well orders $\nu$ and dilators~$D$ based on this collapsing function. The well foundedness of $\psi_\nu(D)$ for arbitrary $\nu$ and $D$ manages, similar to the notation system by Buchholz, to reach ordinals that are much larger than the Bachmann-Howard ordinal. In fact, this statement is equivalent to $\pitr$ over $\rca_0$, which lies beyond even $\pica$. If we restrict ourselves to $\nu := 1$, it still is equivalent to $\pica$ over $\rca_0$. Of course, for our purposes this latter statement is more interesting.

The main result of this paper is the following: For any dilator $D$, the order $\nu(D, 0)$ is exactly the same as the $1$-fixed point of $D$. This connection between increasing $D$-sequences and $1$-fixed points for dilators $D$ is so deep that we can already prove it in $\rca_0$. Thus, we can show that (a variant in second order arithmetic of) Theorem~\ref{thm:d_seq_terminates} is equivalent to $\pica$ over $\rca_0$. However, this connection goes beyond the realm of reverse mathematics and yields a general concise way to explicitly notate ordinals that are produced by increasing $D$-sequences.

It should also be noted that this article is only one of many works that profit from the ideas of abstract collapsing principles such as Bachmann-Howard fixed points and $\nu$-fixed points (cf.~\cite{aguilera2022functorial, FreundPatterns, FreundKruskalFriedman, Freund22, FRW22}).
Let us finish with a short outline of this paper:

In Section \ref{sec:set_theory}, we give two very different proofs of Theorem \ref{thm:d_seq_terminates}. The first proof uses Fodor's lemma, a useful result from set theory about regressive functions from stationary sets into uncountable regular cardinals (cf.~\cite{Fodor56}). The second proof uses Bachmann-Howard fixed points.

In Section \ref{sec:second_order_arithmetic}, we give a definition that faithfully represents increasing $D$-sequences in second order arithmetic.

In Section \ref{sec:one_fixed_points}, we show that this definition bears a strong connection with $1$-fixed points. Here, we also prove that a formulation of Theorem \ref{thm:d_seq_terminates} in the context of reverse mathematics is equivalent to $\pica$ over $\rca_0$.

In Section \ref{sec:inverse_Goodstein}, we use the correspondence between increasing $D$-sequences and $1$-fixed points in order to calculate $\nu(G, 0) = \bachmann$ for the Goodstein dilator $G$. Here, we also use connections between Bachmann-Howard fixed points and $1$-fixed points in order to give purely syntactical arguments for the equivalence of different notation systems of the Bachmann-Howard ordinal.

In Section \ref{sec:weak_inverse_Goodstein}, we consider a weak variant of the inverse Goodstein sequence and calculate $\nu(D, 0) = \varphi_\omega(0)$ for the corresponding dilator $D$. We use similar methods from the previous sections and purely syntactical arguments.

In Section \ref{sec:uniqueness}, we closely inspect properties of our second order definition for increasing $D$-sequences and study the strength of the uniqueness for variations of this definition. The main result of this section is that already the uniqueness of so-called \emph{regular} increasing $D$-sequences for all dilators $D$ is equivalent to $\pica$ over $\rca_0$.

\subsection*{Acknowledgments} The idea for this project and the conjecture that there might be a connection between increasing $D$-sequences and Bachmann-Howard fixed points which leads to an equivalence of their termination and $\Pi^1_1$-comprehension is due to Andreas Weiermann.

I would also like to thank my advisor Anton Freund and all participants of \emph{Trends in Proof Theory 2023} at the university of Ghent, where I had the opportunity to present early results. This work has been funded by the Deutsche Forschungsgemeinschaft (DFG, German Research Foundation) -- Project number~460597863.

\section{Results in set theory}\label{sec:set_theory}

As it turns out, later values of $\nu$, i.e.~$\nu(D, \alpha)$ for a dilator $D$ and $\alpha > 0$, can already be computed from $\nu(E, 0)$ for the right choice of $E$. In fact, we have the following correspondence (cf.~\cite[Proposition~1.3]{AGV90}):
\begin{equation*}
	\nu(D, \alpha) = \alpha + \nu(D \circ (\alpha + \id), 0)\period
\end{equation*}
With this in mind, our studies reduce to the behavior of $\nu(D, 0)$ for dilators $D$.
In order to get a first intuition for increasing $D$-sequences, we begin this section with the computation of $\nu(D, 0)$ for some simple (families of) dilators $D$:
\begin{example}\label{ex:various_d_sequence_fixed_points}\mbox{}
	\begin{enumerate}[label=(\roman*)]
		\item $D$ with $D(0) = 0$.
				
		$A_{0}^{D, 0} = \sup_{0 \leq \delta < 0} (D(\iota_{\delta}^{0})(A_{\delta}^{D, 0}) + 1) = 0 = D(0)$.\\
		Thus, we conclude $\nu(D, 0) = 0$. Using the same argument, we can also see that the first member of any $D$-sequence is equal to $0$.
		\item $D \equiv \beta$ for $\beta \in \on$, i.e., $D(\gamma) = \beta$ and $D(\iota_{\delta}^{\gamma}) = \iota_{\beta}^{\beta}$ for all $\delta < \gamma \in \on$.
		
		We show $A_{\gamma}^{D, 0} = \gamma$ for all $\gamma \leq \beta$. Assume that this claim has already been proven for all $\delta < \gamma$. Then, we have the following:
		\begin{equation*}
			A_{\gamma}^{D, 0} = \sup_{\delta < \gamma} (D(\iota_{\delta}^{\gamma})(A_{\delta}^{D, 0}) + 1) = \sup_{\delta < \gamma} (A_{\delta}^{D, 0} + 1) = \sup_{\delta < \gamma} (\delta + 1) = \gamma\period
		\end{equation*}
		We conclude $A_{\gamma}^{D, 0} = \gamma < \beta = D(\gamma)$ for all $\gamma < \beta$, but for $\gamma := \beta$, we have $A_{\beta}^{D, 0} = \beta = D(\beta)$. Thus, $\nu(D, 0) = \beta$.
		\item $D = D_1 + D_2$ with $D_1(0) = 0$ and under the assumption that $\nu(D_2, 0)$ exists.
		
		Let $\alpha := \nu(D_2, 0)$. We show inductively that for all $\gamma < \alpha$, we have $A_{\gamma}^{D, 0} = D_1(\gamma) + A_{\gamma}^{D_2, 0}$. Let $\gamma \leq \alpha$ and assume that our claim has already been shown for all $\delta < \gamma$. Then, we have:
		\begin{align*}
			A_{\gamma}^{D, 0} &= \sup_{\delta < \gamma} (D(\iota_{\delta}^{\gamma})(A_{\delta}^{D, 0}) + 1) = \sup_{\delta < \gamma} ((D_1 + D_2)(\iota_{\delta}^{\gamma})(D_1(\delta) + A_{\delta}^{D_2, 0}) + 1)\\
			&= \sup_{\delta < \gamma} (D_1(\gamma) + D_2(\iota_{\delta}^{\gamma})(A_{\delta}^{D_2, 0}) + 1) = D_1(\gamma) + A_{\gamma}^{D_2, 0}\period
		\end{align*}
		We conclude $A_{\gamma}^{D, 0} < D(\gamma)$ for all $\gamma < \alpha$ but for $\gamma := \alpha$, we have an equality $A_{\alpha}^{D, 0} = D(\alpha)$. Thus, $\nu(D, 0) = \nu(D_2, 0)$.
	\end{enumerate}
\end{example}

While the definition of dilators as endofunctors that preserve pullbacks and direct limits is nice and compact, we choose a different one that is more explicit and closer to the definition that we will use in second order arithmetic (cf.~\cite{Freund18PhD}):

\begin{definition}
	We define an endofunctor $[\cdot]^{<\omega}$ on $\set$, the category of sets, with
	\begin{align*}
		[S]^{<\omega} &:= \text{ the set of finite subsets of $S$, and}\\
		[f]^{<\omega}(F) &:= f(F)
	\end{align*}
	for sets $S$, $T$, and $F$ (where $F$ is a finite subset of $S$), and morphisms $f: S \to T$.
\end{definition}
\begin{definition}
	A dilator consists of
	\begin{enumerate}[label=(\roman*)]
		\item an endofunctor $D$ on the category of well orders $\wo$ that maps ordinals to ordinals
		\item a natural transformation $\supp^D: D \Rightarrow [\cdot]^{<\omega}$ such that any $\sigma \in D(\alpha)$ lies in the range of $D(\iota)$ for the canonical injection $\iota: \supp^D_\alpha(\sigma) \hookrightarrow \alpha$, for each well order $\alpha$.\footnote{For purposes of readability, we omit the forgetful functor $F: \wo \to \set$ that should be composed both after $D$ and before $[\cdot]^{<\omega}$ in the definition of $\supp^D$.} This property is called the \emph{support condition}.
	\end{enumerate}
\end{definition}
Now, we can give a concrete definition of the Goodstein dilator:
\begin{definition}\label{def:goodstein_dilator_set_theory}
	For the purposes of this paper, we will call the following dilator $G$ the \emph{Goodstein dilator}. Let $\alpha$ be some ordinal, then we define:
	\begin{equation*}
		G(\alpha) := (1 + \alpha)_{\omega} := \sup_{n < \omega} (1 + \alpha)_{n}\comma
	\end{equation*}
	where the subscript $n$ notates a tower of height $n$.
	Consider another ordinal $\beta$ together with an embedding $f : \alpha \to \beta$. Using the Cantor normal form, any element $\sigma \in G(\alpha)$ can uniquely be written as the following sum:
	\begin{equation*}
		{(1 + \alpha)^{\gamma_0} \cdot (1 + \delta_0) + \dots + (1 + \alpha)^{\gamma_{n-1}} \cdot (1 + \delta_{n-1})}
	\end{equation*}
	for ${n \in \n}$, ${\gamma_i \in G(\alpha)}$, and ${\delta_i \in \alpha}$ for all ${i < n}$ such that $\gamma_0 > \dots > \gamma_{n-1}$ holds. Now, the morphism $G(f)$ maps $\sigma$ to
	\begin{equation*}
		{(1 + \beta)^{G(f)(\gamma_0)} \cdot (1 + f(\delta_0)) + \dots + (1 + \beta)^{G(f)(\gamma_{n-1})} \cdot (1 + f(\delta_{n-1}))}\period
	\end{equation*}
	The support of $\sigma$ is given by
	\begin{equation*}
		\supp^G_\alpha(\sigma) := \bigcup \{\supp^G_\alpha(\gamma_i) \cup \{\delta_i\} \mid i < n\}\period
	\end{equation*}
	Finally, our definition can naturally be extended from ordinals to well orders.
\end{definition}

We continue with the first proof of Theorem \ref{thm:d_seq_terminates} using \emph{Fodor's Lemma}. This proof is already mentioned by Abrusci, Girard, and van de Wiele in both works \cite{Abrusci87} and~\cite{AGV90}. However, they do not give any details. The following construction is due to Anton Freund.
We recall some standard notions from set theory that are required for the formulation of Fodor's Lemma:
\begin{definition}[Prerequisites for Fodor's Lemma]\mbox{}
	\begin{itemize}
		\item A cardinal $\kappa$ is called \emph{regular} if and only if any embedding $f: \alpha \to \kappa$ with $\sup_{\beta \in \alpha} f(\beta) = \kappa$ already implies $\alpha = \kappa$.
		\item Given an ordinal $\kappa$, we call $C \subseteq \kappa$ a \emph{club} subset of $\kappa$ if and only if it is both closed and unbounded:
		\begin{itemize}
			\item $C$ is \emph{closed} if for any subset $D \subseteq C$ with $\sup(D) \in \kappa$ we already have $\sup(D) \in C$.
			\item $C$ is \emph{unbounded} if for any element $\alpha \in \kappa$, we can find some $\beta$ with $\alpha < \beta < \kappa$ and $\beta \in C$.
		\end{itemize}
		\item Given a cardinal $\kappa$, we call $S \subseteq \kappa$ a \emph{stationary} subset of $\kappa$ if and only if its intersection with any club subset of $\kappa$ is non-empty.
		\item A function $f: S \to T$ for subsets $S$ and $T$ of the ordinals is called \emph{regressive} if and only if $f(\alpha) < \alpha$ holds for every $\alpha \in S \setminus \{0\}$.
	\end{itemize}
\end{definition}
\begin{lemma}[$\zfc$, Fodor's Lemma]
	Given an uncountable and regular cardinal~$\kappa$, a stationary subset $S \subseteq \kappa$, and a regressive function $f: S \to \kappa$, we can find a stationary subset $S'$ of $\kappa$ with $S' \subseteq S$ and a value $\delta \in \kappa$ such that $f(\alpha) = \delta$ holds for any $\alpha \in S'$.
\end{lemma}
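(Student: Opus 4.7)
The plan is to argue by contradiction. Assume no pair $(\delta, S')$ as in the statement exists. Then for every $\delta \in \kappa$ the set $S_\delta := f^{-1}(\{\delta\}) \cap S$ fails to be stationary, so for each $\delta$ I can fix a club $C_\delta \subseteq \kappa$ disjoint from $S_\delta$. All the strength of the hypothesis now has to be extracted from these $\kappa$-many clubs, and the standard tool for this is the \emph{diagonal intersection}
\[
C := \{\alpha < \kappa : \alpha \in C_\delta \text{ for every } \delta < \alpha\}\text{.}
\]

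The core of the argument is to verify that $C$ is itself a club in $\kappa$. Closure is straightforward: if $\alpha < \kappa$ is the supremum of some $D \subseteq C$, then for each fixed $\delta < \alpha$ the tail $\{\beta \in D : \beta > \delta\}$ lies inside $C_\delta$ (by the defining property of $C$) and still has supremum $\alpha$, so closure of $C_\delta$ yields $\alpha \in C_\delta$; varying $\delta$ gives $\alpha \in C$. For unboundedness, given an arbitrary $\beta < \kappa$, I would recursively build an increasing $\omega$-sequence $\beta < \gamma_0 < \gamma_1 < \cdots$ with $\gamma_{n+1} \in \bigcap_{\delta \leq \gamma_n} C_\delta$, invoking the auxiliary fact that the intersection of strictly fewer than $\kappa$ clubs in a regular uncountable $\kappa$ is again a club and hence unbounded. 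The limit $\gamma := \sup_n \gamma_n$ lies below $\kappa$ by regularity (its cofinality is at most $\omega < \kappa$), and for any $\delta < \gamma$ the tail $\{\gamma_m : \gamma_n \geq \delta \text{ and } m > n\}$ sits inside $C_\delta$ with supremum $\gamma$, so closure of $C_\delta$ places $\gamma$ in it; hence $\gamma \in C$.

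Once $C$ is known to be a club, the contradiction falls out easily. Since $S$ is stationary, $S$ meets the club $C \setminus \{0\}$ (still unbounded and closed), giving some $\alpha \in S \cap C$ with $\alpha > 0$. Regressiveness forces $f(\alpha) < \alpha$, so the definition of $C$ yields $\alpha \in C_{f(\alpha)}$; but at the same time $\alpha \in S_{f(\alpha)}$, contradicting the original choice $C_{f(\alpha)} \cap S_{f(\alpha)} = \emptyset$.

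The main obstacle I anticipate is the unboundedness half of the club-property of $C$: this is the single step where both the regularity and uncountability of $\kappa$ are genuinely used, and it silently relies on the auxiliary lemma that intersections of fewer than $\kappa$ clubs remain clubs in a regular uncountable $\kappa$. The remaining work is essentially bookkeeping around the definitions of \emph{stationary}, \emph{club}, and \emph{regressive}.
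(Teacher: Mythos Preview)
The paper does not supply its own proof of Fodor's Lemma; it simply refers the reader to Fodor's original 1956 article. Your argument via the diagonal intersection of clubs is the standard modern proof and is correct as written.
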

For a proof of this lemma, we refer to Fodor's original result \cite[Satz~2]{Fodor56}.
\begin{lemma}[$\zfc$]\label{lem:dilator_below_kappa}
	Let $D$ be a dilator. Then, there exists some cardinal $\kappa$ such that for any regular cardinal $\lambda \geq \kappa$, we have that $D(\alpha) < \lambda$ holds for any $\alpha < \lambda$.
\end{lemma}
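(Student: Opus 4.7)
The plan is to turn the support condition into a cardinality bound on $D(\alpha)$, and then upgrade this bound to the required strict ordinal inequality by exploiting that $\lambda$ is a cardinal.

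First I would unpack the support condition: every $\sigma \in D(\alpha)$ lies in the image of $D(\iota_s)$, where $s := \supp^D_\alpha(\sigma)$ is a finite subset of $\alpha$ of some size $n = |s|$, and $\iota_s$ is the canonical embedding into $\alpha$ induced by the increasing enumeration of $s$. Consequently the assignment $(s, \tau) \mapsto D(\iota_s)(\tau)$ is a surjection from pairs consisting of a finite subset $s \subseteq \alpha$ and an element $\tau \in D(|s|)$ onto $D(\alpha)$, which immediately yields the bound
\[
|D(\alpha)| \;\leq\; \aleph_0 \cdot |[\alpha]^{<\omega}| \cdot \sup_{n < \omega} |D(n)|.
\]

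Next I would set $\mu := \aleph_0 + \sup_{n < \omega} |D(n)|$ and take $\kappa := \mu^+$. Given a regular cardinal $\lambda \geq \kappa$ and some $\alpha < \lambda$, I would distinguish two cases. If $\alpha$ is finite then $|D(\alpha)| \leq \mu < \lambda$ directly. If $\alpha$ is infinite then $|[\alpha]^{<\omega}| = |\alpha|$ and $|\alpha| < \lambda$, so $|D(\alpha)| \leq |\alpha| \cdot \mu = \max(|\alpha|, \mu) < \lambda$, since both factors are infinite and strictly below the uncountable cardinal $\lambda$. Either way $|D(\alpha)| < \lambda$, and because $\lambda$ is a cardinal, the ordinal $D(\alpha)$ is then itself strictly below $\lambda$.

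The only mildly delicate point is spelling out the surjectivity step cleanly from the support condition; once that is done, the rest reduces to routine cardinal arithmetic inside $\zfc$, and I do not anticipate any real obstacle. Note that regularity of $\lambda$ is not actually used here; it will only enter in the later application of Fodor's Lemma.
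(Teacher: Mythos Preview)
Your proof is correct and takes essentially the same approach as the paper: bound $|D(\alpha)|$ via the support condition (the paper writes the bound as $|D(\omega)| \times |\alpha^\omega|$, using that $D(n)\le D(\omega)$ and counting finite sequences rather than finite subsets, but this is equivalent to your $\mu\cdot|[\alpha]^{<\omega}|$), and then conclude $D(\alpha)<\lambda$ from $|D(\alpha)|<\lambda$ since $\lambda$ is a cardinal. Your closing remark is also accurate: the paper explicitly invokes regularity of $\lambda$ to argue that the set of finite sequences stays below $\lambda$, whereas your direct cardinal-arithmetic computation shows this hypothesis is not actually needed at this stage.
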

\begin{proof}
	Let $\kappa$ be an uncountable cardinal with $D(\omega) < \kappa$. We show that this choice for $\kappa$ satisfies our claim.
	
	Let $\lambda \geq \kappa$ be some regular cardinal and assume $\alpha < \lambda$. Recalling the support condition for dilators, for any element $\gamma \in D(\alpha)$ there is some natural number $n \in \n$ together with an embedding $f : n \to \alpha$ such that there exists a unique $\sigma \in D(n)$ with $D(f)(\sigma) = \gamma$. Therefore, in order to find some upper bound on the cardinality of $D(\alpha)$, we simply count the amount of choices for $n$, $f$, and $\sigma$. Since such embeddings $f$ correspond to finite (descending) sequences of $\alpha$, and $D(n) \leq D(\omega)$ holds for any $n \in \n$, we have
	\begin{equation*}
		| D(\alpha) | \leq | D(\omega) | \times | \alpha^\omega |\period
	\end{equation*}
	By assumption, we have $D(\omega) < \kappa \leq \lambda$. Moreover, $\alpha^\omega < \lambda$ follows from $\alpha < \lambda$ since our regular $\lambda$ contains $\omega$ and is closed under products. We arrive at $|D(\alpha)| < \lambda$. Thus, we conclude $|D(\alpha)| < \lambda$ and, since $\lambda$ is a cardinal, $D(\alpha) < \lambda$.
\end{proof}

\begin{proof}[Proof of Theorem \ref{thm:d_seq_terminates} using Fodor's Lemma]
	As $\nu(D, 0)$ maps to the smallest possible value $\alpha$ with ${A_{\alpha}^{D, 0} = D(\alpha)}$ if it is defined, the uniqueness part is clear.
	
	Assume that $D$ is a dilator and, using Lemma \ref{lem:dilator_below_kappa}, let $\kappa$ be an uncountable regular cardinal with $D(\gamma) < \kappa$ for all $\gamma < \kappa$. Furthermore, assume that $A_{\gamma}^{D, 0} < D(\gamma)$ holds for any $\gamma < \kappa$. In the following, we will lead these assumptions to a contradiction.
	
	Let $\Lim_{\kappa}$ denote the club set of all limit ordinals below $\kappa$. We define a function $f : \Lim_{\kappa} \to \kappa$ such that $f(\gamma)$ is the smallest ordinal greater than any element of $\supp_{\gamma}(A_{\gamma}^{D, 0})$. We show that $f$ is regressive, which also makes sure that $f$ is well defined in the sense that it maps below $\kappa$.
	
	If the support of $A_{\gamma}^{D, 0}$ is empty, then $f(\gamma)$ maps to $0$. Since any element in $\Lim_{\kappa}$ must be greater than $0$, we know that $f$ is regressive for such $\gamma$. If $\supp_{\gamma}(A_{\gamma}^{D, 0})$ is not empty, then $f(\gamma)$ is equal to $\delta + 1$ where $\delta$ is the biggest element of this set. By definition of the support, we have $\delta < \gamma$. Moreover, since $\gamma$ is a limit ordinal, we even have $f(\gamma) = \delta + 1 < \gamma$. We conclude that $f$ is regressive on its whole domain.
	
	Now, we apply Fodor's Lemma. This yields some element $\delta \in \kappa$ such that $S := f^{-1}(\delta) \subseteq \Lim_{\kappa}$ is stationary with respect to $\kappa$. By definition of $f$, we have $\supp_{\gamma}(A_{\gamma}^{D, 0}) \subseteq \delta$ for any $\gamma \in S$. For our purposes, we are only interested in the fact that $S$ is unbounded in $\kappa$, i.e., for any $\gamma \in \kappa$, there is some $\gamma' \in S$ with $\gamma' > \gamma$. In order to make a later argument simpler, we restrict ourselves to $S' := S \setminus \delta$. Clearly, this subset of $S$ is still unbounded in $\kappa$. Using the fact that $\kappa$ is regular, we conclude that there is an embedding $g : \kappa \to S'$. By definition of $f$ and $S$, this embedding satisfies $\supp_{g(\gamma)}(A_{g(\gamma)}^{D, 0}) \subseteq \delta$ for any $\gamma \in \kappa$.
	
	Using the support condition, we can show that any of the elements in our $D$-sequence with index in $S'$ can be mapped to by something that lives in $D(\delta)$. The important detail of our argument is that this can be done in an order-preserving manner.
	
	In order to make this precise, let $h : S' \to D(\delta)$ be the function that maps any index $\gamma \in S'$ to the unique element $h(\gamma) \in D(\delta)$ with $D(\iota_{\delta}^{\gamma})(h(\gamma)) = A_{\gamma}^{D, 0}$. Such a value $h(\gamma)$ always exists because of the support condition in combination with $\supp_{\gamma}(A_{\gamma}^{D, 0}) \subseteq \delta$. For the existence of $\iota_{\delta}^{\gamma}$, we use the fact that, by definition, any considered element $\gamma \in S'$ is greater or equal to $\delta$.
	
	We want to show that $h$ is an embedding. Let $\gamma, \gamma' \in S'$ with $\gamma < \gamma'$ and assume $h(\gamma) \geq h(\gamma')$. Then, we have
	\begin{equation*}
		A^{D, 0}_{\gamma'} > D(\iota_\gamma^{\gamma'})(A^{D, 0}_\gamma) = D(\iota_\delta^{\gamma'})(h(\gamma)) \geq D(\iota_\delta^{\gamma'})(h(\gamma')) = A^{D, 0}_{\gamma'}\comma
	\end{equation*}
	which clearly is a contradiction.
	
	For the final steps of this proof, we combine $g$ and $h$ into ${h \circ g : \kappa \to D(\delta)}$. Recall that at the beginning, $\kappa$ was chosen to satisfy $D(\delta) < \kappa$ for $\delta < \kappa$. This renders the existence of an embedding like $h \circ g$ impossible. Because of this contradiction, we conclude that there must have been some ordinal $\alpha$ (even below $\kappa$) satisfying the equality $A_{\alpha}^{D, 0} = D(\alpha)$.
\end{proof}

\begin{remark}
	This proof also gives us a first upper bound on $\nu(D, 0)$ for a dilator~$D$: We have $\nu(D, 0) < \kappa$ where $\kappa$ is the least uncountable regular cardinal greater than $D(\omega)$. For the Goodstein dilator $G$ (and any other dilator that we consider in second order arithmetic), this entails that $\nu(G, 0)$ is countable.
\end{remark}

The next proof makes use of so-called \emph{Bachmann-Howard fixed points}. An argument using this technique has been conjectured by Weiermann.
The concept was first introduced by Freund (cf.~\cite{Freund18Higher, Freund18PhD, FreundCategorical, Freund19, Freund20}). While he mostly uses these fixed points in the context of reverse mathematics, he also shows that they exist in ``regular'' set theory. With our proof, we give further evidence that Bachmann-Howard fixed points are a valuable tool outside of second order arithmetic.
\begin{definition}
	Given a dilator $D$, we call an ordinal $\alpha$ a \emph{Bachmann-Howard fixed point} if there exists a map
	\begin{equation*}
		\vartheta: D(\alpha) \to \alpha
	\end{equation*}
	satisfying the following for any $\sigma, \tau \in D(\alpha)$:
	\begin{enumerate}[label=(\roman*)]
		\item If $\sigma < \tau$ and $\supp^D_\alpha(\sigma) \subseteq \vartheta(\tau)$, then we have $\vartheta(\sigma) < \vartheta(\tau)$.
		\item We have $\supp^D_\alpha(\sigma) \subseteq \vartheta(\sigma)$.
	\end{enumerate}
	We call $\vartheta$ a \emph{Bachmann-Howard collapse}.
\end{definition}

\begin{theorem}[$\zfc$]
	For any dilator $D$, there is a Bachmann-Howard fixed point.
\end{theorem}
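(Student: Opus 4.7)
The plan is the classical construction of an ordinal collapsing function, tailored to the dilator~$D$. The strategy is to first define a global collapse $\vartheta$ on $D(\Omega)$ for a suitably large uncountable regular cardinal~$\Omega$, and then cut down to a closed initial segment $\alpha < \Omega$ on which $\vartheta$ witnesses the required properties.

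First I would invoke Lemma~\ref{lem:dilator_below_kappa} to fix an uncountable regular cardinal~$\Omega$ such that $D(\beta) < \Omega$ for every $\beta < \Omega$. Next, by transfinite recursion along the well order $D(\Omega)$, I would define $\vartheta : D(\Omega) \to \Omega$ by letting $\vartheta(\sigma)$ be the least ordinal $\beta$ such that (a)~$\supp^D_\Omega(\sigma) \subseteq \beta$ and (b)~$\vartheta(\tau) < \beta$ for every $\tau < \sigma$ with $\supp^D_\Omega(\tau) \subseteq \beta$. To see that such a $\beta$ exists below $\Omega$, I would start from $\beta_0 := \max(\supp^D_\Omega(\sigma)) + 1$ and iterate $\beta_{n+1} := \sup(\{\beta_n\} \cup \{\vartheta(\tau) + 1 : \tau < \sigma,\ \supp^D_\Omega(\tau) \subseteq \beta_n\})$. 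The support condition bounds the number of relevant $\tau$'s by $|D(\beta_n)| < \Omega$, so regularity of $\Omega$ keeps every $\beta_n$, and hence $\beta_\omega := \sup_n \beta_n$, below $\Omega$; a finite-support argument then shows that $\beta_\omega$ already satisfies~(a) and~(b), so the least valid $\beta$ lies below $\Omega$.

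To produce the fixed point itself I would close $0$ under $\vartheta$: set $\alpha_0 := 0$, $\alpha_{n+1} := \sup\{\vartheta(\sigma) + 1 : \sigma \in D(\Omega),\ \supp^D_\Omega(\sigma) \subseteq \alpha_n\}$, and $\alpha := \sup_n \alpha_n$, which stays below~$\Omega$ by the same cardinality bound together with regularity. Since supports are finite, every $\sigma \in D(\Omega)$ with $\supp^D_\Omega(\sigma) \subseteq \alpha$ has its support in some $\alpha_n$, and therefore $\vartheta(\sigma) < \alpha_{n+1} \leq \alpha$. By the support condition, the set of such $\sigma$ is identified with $D(\alpha)$ via $D(\iota_\alpha^\Omega)$; hence $\vartheta$ restricts to a map $D(\alpha) \to \alpha$. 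Properties~(i) and~(ii) of a Bachmann-Howard collapse are then immediate from clauses~(b) and~(a) of the recursive definition, respectively.

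The main obstacle is the well-definedness of~$\vartheta$, that is, verifying at every stage of the recursion that some valid $\beta < \Omega$ exists. This is precisely where Lemma~\ref{lem:dilator_below_kappa} becomes essential: without the bound $|D(\beta)| < \Omega$ for $\beta < \Omega$, the stage-by-stage closure could escape~$\Omega$ and the definition would fail to terminate; the same bound is then re-used to guarantee that the outer closure producing $\alpha$ also stays below $\Omega$.
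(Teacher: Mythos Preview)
Your construction is correct. The paper does not give its own proof of this theorem; it simply states ``For a proof of this theorem, we refer to Section~2 of \cite{Freund19}.'' Your approach---fixing a regular $\Omega$ via Lemma~\ref{lem:dilator_below_kappa}, defining $\vartheta$ on $D(\Omega)$ by transfinite recursion along $D(\Omega)$ as the least $\beta$ satisfying the two closure clauses, and then closing $0$ under $\vartheta$ in $\omega$ steps to obtain $\alpha<\Omega$---is the standard construction and is essentially what is carried out in the cited reference, so there is nothing further to compare.
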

For a proof of this theorem, we refer to Section 2 of \cite{Freund19}.
\begin{proof}[Proof of Theorem \ref{thm:d_seq_terminates} using Bachmann-Howard fixed points]
	Similar to the previous proof, the uniqueness of $\nu(D, 0)$ is clear. Let $\alpha$ be a Bachmann-Howard fixed point of $D$ with collapse $\vartheta: D(\alpha) \to \alpha$. We define a sequence $(\sigma_\beta)_{\beta < \alpha}$ in $D(\alpha)$ with $\sigma_\beta := D(\iota_\beta^\alpha)(A^D_\beta)$ for each $\beta < \alpha$.
	In order to derive a contradiction, we assume that $A^D_\beta < D(\beta)$ holds for any ordinal $\beta$. In particular, $\sup_{\beta < \alpha}(\sigma_\beta + 1) =: \tau < D(\alpha)$.
	
	The first important properties of $(\sigma_\beta)_{\beta < \alpha}$ are that it is a strictly increasing sequence and that we have $\supp^D_\alpha(\sigma_\beta) \subseteq \beta$ for any $\beta < \alpha$. For the former, we simply apply the definition:
	\begin{equation*}
		\sigma_\beta = D(\iota_\beta^\alpha)(\sup_{\gamma < \beta}(D(\iota_\gamma^\beta)(A^D_\gamma) + 1)) > D(\iota_\gamma^\alpha)(A^D_\gamma) = \sigma_\gamma
	\end{equation*}
	holds for $\gamma < \beta < \alpha$. The latter is given by $\supp^D_\alpha(\sigma_\beta) = \supp^D_\beta(A^D_\beta) \subseteq \beta$.
	
	Next, we put the sequence through our collapse and show that its order is preserved. For a proof by contradiction, assume that $\beta < \alpha$ is the smallest ordinal such that there exists a $\gamma < \beta$ with $\vartheta(\sigma_\gamma) \geq \vartheta(\sigma_\beta)$. Given $\beta$, we also choose $\gamma$ to be the smallest ordinal with this property. Because of the minimality of $\beta$, we know that $\rho \mapsto \vartheta(\sigma_\rho)$ for $\rho < \beta$ is an embedding. Thus, we have $\vartheta(\sigma_\rho) \geq \rho$ for any $\rho < \beta$. In particular, this holds for any element in $\supp^D_\alpha(\sigma_\gamma)$. Using the minimality of $\gamma$, we conclude $\rho \leq \vartheta(\sigma_\rho) < \vartheta(\sigma_\beta)$ for any $\rho \in \supp^D_\alpha(\sigma_\gamma) \subseteq \gamma$. Combining this with $\sigma_\gamma < \sigma_\beta$ yields the contradiction $\vartheta(\sigma_\gamma) < \vartheta(\sigma_\beta)$. Thus, $\beta \mapsto \vartheta(\sigma_\beta)$ for $\beta < \alpha$ is an embedding.
	
	Consider $\tau \in D(\alpha)$ with $\sigma_\beta < \tau$ for all $\beta < \alpha$ from the beginning. In particular, we have $\sigma_{\vartheta(\tau)} < \tau$ and, since every element in the support of the left hand side lies below $\vartheta(\tau)$, also $\vartheta(\sigma_{\vartheta(\tau)}) < \vartheta(\tau)$. However, since $\beta \mapsto \vartheta(\sigma_\beta)$ for $\beta < \alpha$ is an embedding, we have $\vartheta(\tau) \leq \vartheta(\sigma_{\vartheta(\tau)})$. Clearly, this is a contradiction.
\end{proof}

\begin{remark}\label{rem:bachmann_not_exact_introduction}
	This proof has the potential to give much better upper bounds on $\nu(D, 0)$ for dilators $D$ than the previous proof since the smallest Bachmann-Howard fixed point $\alpha$ depends on the whole dilator $D$ (instead of only $D(\omega)$). We have $\nu(D, 0) \leq \alpha$ for the (smallest such) fixed point $\alpha$. Notice that inequality is not strict this time. As we will see later, this proof already entails $\nu(G, 0) \leq \bachmann$ for the Goodstein dilator $G$ where $\bachmann$ is the Bachmann-Howard ordinal. Still, there are cases in which the fixed point is strictly larger than $\nu(D, 0)$. An example for this is the \emph{weak Goodstein dilator} $W$ that we will study in Section~\ref{sec:weak_inverse_Goodstein}. We will see that $\nu(W, 0) = \varphi_\omega(0)$ holds, whereas even the smallest Bachmann-Howard fixed point of $W$ cannot be smaller than $\vartheta(\Omega^\omega)$, which is much larger.
\end{remark}
We can also already state the following corollary. However, for now this result comes without an explicit value for $\nu(G, 0)$ where $G$ is the Goodstein dilator.
\begin{corollary}[$\zfc$]
	The inverse Goodstein sequence terminates.
\end{corollary}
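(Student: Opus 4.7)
The plan is to deduce this as an immediate instance of Theorem \ref{thm:d_seq_terminates} applied to the Goodstein dilator $G$. First I would verify that $G$ as given in Definition \ref{def:goodstein_dilator_set_theory} really satisfies the formal definition of a dilator: functoriality of $G$ follows directly from its action on Cantor normal forms (composition of order embeddings acts coordinatewise on the finite list of coefficients and exponents), and the support condition is immediate because any $\sigma \in G(\alpha)$, written in Cantor normal form involving finitely many ordinals from $\alpha$, manifestly lies in the image of $G(\iota)$ for the inclusion $\iota \colon \supp^G_\alpha(\sigma) \hookrightarrow \alpha$. Naturality of $\supp^G$ reduces to the same observation: $G(f)$ replaces each $\delta_i$ by $f(\delta_i)$ and each $\gamma_i$ by $G(f)(\gamma_i)$, which commutes with the inductive definition of the support.

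Once $G$ is confirmed to be a dilator in the formal sense, Theorem \ref{thm:d_seq_terminates} supplies a unique ordinal $\alpha$ with $\nu(G, 0) = \alpha$. Unfolding the definition of $\nu$, this means $A_\alpha^{G, 0} = G(\alpha)$, so the increasing $G$-sequence — which, by the convention established just after Definition \ref{def:zfc_inverse_d_sequence}, \emph{is} the inverse Goodstein sequence — reaches its termination condition at $\alpha$. There is no genuine obstacle here: all the real content sits in Theorem \ref{thm:d_seq_terminates}, and the sharp identification of the value $\nu(G, 0) = \bachmann$ predicted by Proposition \ref{prop:goodstein_bachmann} is left to Section \ref{sec:inverse_Goodstein}.
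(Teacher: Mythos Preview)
Your proposal is correct and follows exactly the paper's approach: apply Theorem~\ref{thm:d_seq_terminates} to the Goodstein dilator. The additional verification that $G$ satisfies the dilator axioms is a welcome elaboration of what the paper leaves implicit.
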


\begin{proof}
	Apply Theorem \ref{thm:d_seq_terminates} to the Goodstein dilator.
\end{proof}

\section{Inverse $D$-sequences in second order arithmetic}\label{sec:second_order_arithmetic}
From now on, we are working in second order arithmetic. To be more precise: the system of $\rca_0$ from reverse mathematics. The standard literature on this topic is \cite{Simpson09} by Simpson. For the definition of dilators in second order arithmetic, we first consider the more general notion of \emph{predilators}, that are defined on linear orders:
\begin{definition}
	A predilator consists of
	\begin{enumerate}[label=(\roman*)]
		\item an endofunctor $D$ on the category of linear orders $\lo$ and
		\item a natural transformation $\supp^D: D \Rightarrow [\cdot]^{<\omega}$ such that any $\sigma \in D(X)$ lies in the range of $D(\iota)$ for the canonical injection $\iota: \supp^D_X(\sigma) \hookrightarrow X$, for each linear order $X$.\footnote{Again, we are omitting forgetful functors.}
	\end{enumerate}
	If it is clear from context, we omit the superscript $D$ in $\supp^D$. A predilator is called a \emph{dilator} if it preserves well orders.
\end{definition}
The support condition, i.e.~(ii), has the effect that $D$ is already defined by its behavior on \emph{finite} linear orders. This makes it possible to represent predilators and, hence, dilators in the language of second order arithmetic. More information about this can be found in \cite[Section~2]{Freund20}. Also, it should be noted that Girard's original definition of predilators also requires a further monotonicity condition: $D(f) \leq D(g)$ is supposed to hold for morphisms $f, g: X \to Y$ with $f(x) \leq g(x)$ for all $x \in X$. In the case of dilators, this property is implied.

In the following, we will need some notation around linear orders: 
Given a linear order $X$ and two finite sets $Y, Z \subseteq X$, we write $Y <_{\fin} Z$ (or $Y \leq_{\fin} Z$) if for any element $y \in Y$ there is some $z \in Z$ with $y < z$ (or $y \leq z$). For singleton subsets, we simply write the element instead of the set, e.g.~$\{y\} <_{\fin} Z$ may be written as $y <_{\fin} Z$ for $y \in X$ and a finite $Z \subseteq X$.

For two linear orders $X$ and $Y$ with $X \subseteq Y$, we write $\iota_X^Y$ for the canonical injection from $X$ into $Y$. Given a linear order $X$ together with an element $x \in X$, we define $\restr{X}{x}$ to be the suborder of all elements in $X$ strictly below $x$. Finally, given a function $f: X \to Y$ between linear orders $X$ and $Y$ with $x \in X$, we define $\restr{f}{(\restr{X}{x})}: \restr{X}{x} \to \restr{Y}{f(x)}$ with $\restr{f}{(\restr{X}{x})}(x') := f(x')$ for $x' \in \restr{X}{x}$. Notice that we are also restricting the codomain. This will become convenient during later proofs.

We define increasing $D$-sequences in the context of second order arithmetic. Since we do not have the notion of an ordinal in this setting (and also want to extend this idea to linear orders), we cannot simply define $\nu(D, 0)$ for (pre)dilators~$D$ in~$\rca_0$. Instead, we give linear orders that satisfy properties similar to that of $\nu(D, 0)$ a name: termination point.
\begin{definition}\label{def:soa_d_sequence_fixed_point}
	Let $D$ be a predilator. We call the linear order $X$ together with a sequence $(A_{x}^{D})_{x \in X}$ with $A_{x}^{D} \in D(\restr{X}{x})$ for any $x \in X$ a \emph{$D$-sequence termination point} if and only if the following are satisfied:
	\begin{enumerate}
		\item $A_{x}^{D}$ is the smallest value in $D(\restr{X}{x})$ greater than any of $D(\iota_{\restr{X}{y}}^{\restr{X}{x}})(A_{y}^{D})$ for~$y <_{X} x$.
		\item For any element $\sigma \in D(X)$, there is some $x \in X$ with $\sigma \leq D(\iota_{\restr{X}{x}}^{X})(A_{x}^{D})$.
	\end{enumerate}
	We call a $D$-sequence termination point \emph{regular} if and only if the following holds additionally:
	\begin{enumerate}
		\item[(3)] Consider the relation $\prec$ that is defined by
		\begin{equation*}
			x \prec y \quad :\Longleftrightarrow \quad x \in \supp_{\restr{X}{y}}(A_{y}^{D})
		\end{equation*}
		for any $x, y \in X$. We require that this relation comes with a height function $h: X \to \n$ that satisfies $h(x) < h(y)$ for any $x, y \in X$ with $x \prec y$.
	\end{enumerate}
	Moreover, we call a $D$-sequence termination point \emph{strong} if and only if it is regular and satisfies the following strengthening of condition (2):
	\begin{enumerate}
		\item[(4)] For any element $\sigma \in D(X)$, there is a smallest $x \in X$ with $\supp_{X}(\sigma) \leq_{\fin} x$ and $\sigma \leq D(\iota_{\restr{X}{x}}^{X})(A_{x}^{D})$.
		
	\end{enumerate}
\end{definition}
While it might already be clear how conditions (1) and (2) faithfully represent increasing $D$-sequences from set theory, conditions (3) and (4) appear to be rather ad hoc. Later, it will become apparent that they are required for a one-to-one correspondence with $1$-fixed points. For the moment, we can just accept them since they always hold in the most important case when the underlying order $X$ is well founded:

\begin{lemma}[$\rca_0$]\label{lem:wf_fp_strong}
	Given some predilator $D$, any well founded $D$-sequence termination point is already strong.
\end{lemma}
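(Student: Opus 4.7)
My plan is to prove conditions (3) and (4) separately, both leveraging well-foundedness of $X$.

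For condition (4), given $\sigma \in D(X)$, condition (2) supplies some $x_0 \in X$ with $\sigma \leq D(\iota_{\restr{X}{x_0}}^X)(A_{x_0}^D)$. The key auxiliary observation is monotonicity: for $y < x$ in $X$, condition (1) gives $A_x^D > D(\iota_{\restr{X}{y}}^{\restr{X}{x}})(A_y^D)$, and functoriality via $D(\iota_{\restr{X}{x}}^X) \circ D(\iota_{\restr{X}{y}}^{\restr{X}{x}}) = D(\iota_{\restr{X}{y}}^X)$ together with order-preservation by $D$ yields $D(\iota_{\restr{X}{x}}^X)(A_x^D) > D(\iota_{\restr{X}{y}}^X)(A_y^D)$. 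Hence the bound on $\sigma$ persists for every $x \geq x_0$. Setting $x_1$ to be the $X$-maximum of the finite set $\{x_0\} \cup \supp_X(\sigma)$ yields both $\supp_X(\sigma) \leq_{\fin} x_1$ and $\sigma \leq D(\iota_{\restr{X}{x_1}}^X)(A_{x_1}^D)$. Well-foundedness of $X$ then produces the smallest such $x$.

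For condition (3), I would construct $h: X \to \n$ by recursion along $\prec$. Since $\prec$ is contained in $<$ and hence well-founded, and each predecessor set $\supp_{\restr{X}{y}}(A_y^D)$ is finite, I set $h(y) := \max(\{0\} \cup \{h(x) + 1 : x \prec y\})$. The value $h(y)$ should lie in $\n$ because the $\prec$-tree below $y$ is finitely branching and well-founded, and hence of finite depth.

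The main obstacle is formalizing this height construction within $\rca_0$: the direct argument that a well-founded finitely branching tree has finite depth typically invokes K\"onig's lemma, which is not available. I anticipate circumventing this by describing $h(y)$ via a formula using only the explicit support data together with the well-foundedness of $<$ on $X$, so that the function $h$ can be extracted under $\rca_0$'s limited comprehension, with well-foundedness of $X$ used to rule out infinite $\prec$-descending sequences that would witness an infinite value of $h$. Handling this finiteness argument carefully is the most delicate point in the proof.
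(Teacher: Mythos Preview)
Your proposal is correct and follows essentially the same route as the paper. For condition~(4) the paper does precisely what you do (pass from the witness supplied by~(2) to the maximum with the support, then use well-foundedness of $X$ to extract the least witness); for condition~(3) the paper also defines $h(y)$ as the length of the longest $\prec$-chain starting at $y$, and, as you anticipate, the delicate point is making this total in $\rca_0$. The paper's concrete device for this is a primitive recursive bounding function on codes: since supports are finite sets coded as natural numbers, one can compute, for each $x$ and $n$, an explicit bound on the codes of all elements reachable from $x$ by a $\prec$-chain of length $n$, so the search for such chains is bounded; totality of $h$ is then proved by assuming it fails and extracting an infinite $<$-descending sequence in $X$. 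Your sketch points in exactly this direction, so what remains is to write out that bounding function and the termination argument explicitly.
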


\begin{proof}
	Let $X$ be some well founded $D$-sequence termination point. For requirement (3), we first observe that $x \prec y$ implies $x < y$ for any $x, y \in X$ since $\supp_{\restr{X}{y}}(A_{y}^{D}) <_{\fin} y$ holds. Therefore, $\prec$ must be well founded. We proceed with the definition of our height function $h: X \to \n$. Note that this can be done quite easily by considering the tree that the relation $\prec$ induces for an arbitrary element and applying bounded K\H{o}nig's Lemma (cf.~\cite[Definition IV.1.3 and Lemma IV.1.4]{Simpson09}), which is available in $\wkl_0$, to this structure. However, we want to show that our claim can already be proven in $\rca_0$ using an argument that is quite similar to that of the proof for \cite[Proposition~19]{Uftring23}:
	
	Recall that supports of dilators are given by finite sets that we internally code as natural numbers. Thus, we can effectively reason about the elements of any support. Let $f: X \times \n \to \n$ be the following primitive recursive function:
	\begin{equation*}
		f(x, n) :=
		\begin{cases}
			x & \text{if } n = 0\comma\\
			\max \{f(x, n-1), l \mid \text{$k, l \in \n$, $k \leq f(x, n-1)$, and $l \prec k$} \} & \text{otherwise.}
		\end{cases}
	\end{equation*}
	Note that this function gives the \emph{code} of $x$ in the case $n = 0$ and interprets $\prec$ as a relation on natural numbers in the case $n > 0$. Using a simple induction, we can show that for any sequence $s \in X^*$ that begins with $x \in X$ and descends with respect to $\prec$, the value of $f(x, |s| - 1)$ is an upper bound to the code of any member in $s$. Assuming a reasonable coding of sequences, this can be turned into a function $g: X \to \n$ such that $g(x, n)$ is an upper bound to the codes of all sequences $s \in X^*$ of length $n$ that begin with $x \in X$ and descend with respect to $\prec$.
	
	Let $h: X \to \n$ be the program that for each $x \in X$ searches for and yields the least number $n \in \n$ such that there is no sequence $s \in X^*$ of length $n$ that begins with $x$ and descends with respect to $\prec$. For each potential candidate $n$, this program simply searches for such a sequence by considering all codes below $g(x, n)$. In the case where $n$ is minimal such that no sequence with this property can be found, it terminates and gives $n$ as its result.
	
	We prove that $h$ must always terminate: Assume that it does not. Then, there must be some $x \in X$ such that we can perform the following construction: Let $(x_n)_{n \in \n}$ be the sequence such that, for any $n \in \n$, $x_n$ is the smallest element in $X$ (with respect to the order on $X$) such that there exists a sequence $s \in X^*$ of length $n$ that begins with $x_n$ and descends with respect to $\prec$. With the help of $g$, we can effectively search for all such sequences and simply produce the minimum of their first member. Now, we claim that this yields an infinite descending sequence in $X$: Given an arbitrary number $n \in \n$, we know that there must be a sequence $s$ that has length $n+1$, starts with $x_{n+1}$, and descends with $\prec$. Clearly, the sequence $s'$ that is identical to $s$ but misses its first element has length $n$, descends in $\prec$, and begins with an element $y \in X$ such that $y \prec x_{n+1}$ and, thus, $y < x_{n+1}$ holds. With $x_n \leq y$, which clearly holds by definition of $(x_n)_{n \in \n}$, this results in a contradiction and, hence, our claim that $h$ must terminate.
	
	The final step in our proof of requirement (3) is to show the main property that we demand of $h$: For any $x, y \in X$ with $x \prec y$, we want that $h(x) < h(y)$ holds. Consider a sequence that realizes the value of $h(x)$, i.e., let $s \in X^*$ be a sequence of length $h(x) - 1$ that begins with $x$ and descends with respect to $\prec$. Now, we can extend $s$ into a sequence $s'$ that begins with $y$ and continues with $s$. Clearly, this sequence witnesses that $h(x) < h(y)$ must hold.
	
	For requirement (4), we first use (2) and assume that $x \in X$ satisfies the inequality $\sigma \leq D(\iota_{\restr{X}{x}}^{X})(A_{x}^{D})$. Now, if $\supp_{X}(\sigma) \leq_{\fin} x$ does not hold, let $y$ be the greatest element of $\supp_{X}(\sigma)$. Clearly, we have $x <_{X} y$. Using (1), we know that $A_{y}^{D}$ is greater than $D(\iota_{\restr{X}{x}}^{\restr{X}{y}})(A_{x}^{D})$. Thus, we have
	\begin{equation*}
		\sigma \leq D(\iota_{\restr{X}{x}}^{X})(A_{x}^{D}) = D(\iota_{\restr{X}{y}}^{X})(D(\iota_{\restr{X}{x}}^{\restr{X}{y}})(A_{x}^{D})) < D(\iota_{\restr{X}{y}}^{X})(A_{y}^{D})\period
	\end{equation*}
	Finally, since $X$ is well founded, we know that there is a smallest such value.
\end{proof}
Next, we make the vague feeling that our definition correctly represents increasing $D$-sequences concrete. First, it should be clear that for any (second order arithmetic) dilator $D$ on linear orders, there is a (set-theoretic) dilator $\tilde{D}$ on well orders such that we can find a natural isomorphism $\eta: \restr{D}{\wo} \Rightarrow \tilde{D}$. This also holds in the other direction if we require that $\tilde{D}(n)$ is countable for every $n \in \n$, i.e., for every set theoretic dilator $\tilde{D}$ (with this restriction on $\tilde{D}$ for all $n \in \n$), there is a second order arithmetic dilator $D$ such that $\eta$ exists.

\begin{lemma}[$\zfc$]\label{lem:zfc_fp_is_soa_fp}
	Given a (second order arithmetic) dilator $D$, consider its set-theoretic realization $\tilde{D}$. We abbreviate $\alpha := \nu(\tilde{D}, 0)$.
	There is a sequence $(A^D_\gamma)_{\gamma \in \alpha}$ realizing that $\alpha$ is a strong $D$-sequence termination point in the sense of Definition~\ref{def:soa_d_sequence_fixed_point}.
	Moreover, for any $\gamma \in \alpha$, the order represented by $A^D_\gamma$, i.e.~$\restr{D(\gamma)}{A^D_\gamma}$, is isomorphic to $A^{\tilde{D}, 0}_\gamma$.
\end{lemma}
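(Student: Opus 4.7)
The plan is to pull the set-theoretic sequence $(A^{\tilde{D}, 0}_\gamma)_{\gamma < \alpha}$ back along the natural isomorphism $\eta : \restr{D}{\wo} \Rightarrow \tilde{D}$ described in the paragraph preceding the lemma. For each $\gamma < \alpha$, minimality of $\alpha = \nu(\tilde{D}, 0)$ forces $A^{\tilde{D}, 0}_\gamma < \tilde{D}(\gamma)$, so I would set $A^D_\gamma := \eta_\gamma^{-1}(A^{\tilde{D}, 0}_\gamma) \in D(\gamma)$, identifying $\restr{\alpha}{\gamma}$ with $\gamma$ as a well order. The isomorphism $\restr{D(\gamma)}{A^D_\gamma} \cong A^{\tilde{D}, 0}_\gamma$ claimed in the lemma is then just the restriction of $\eta_\gamma$, using that an ordinal, viewed as a well order, coincides with the order of its predecessors inside $\tilde{D}(\gamma)$.

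Next I would verify conditions~(1) and~(2) of Definition~\ref{def:soa_d_sequence_fixed_point}. Condition~(1) drops out directly from the recursive clause $A^{\tilde{D}, 0}_\gamma = \sup_{\delta < \gamma}(\tilde{D}(\iota_\delta^\gamma)(A^{\tilde{D}, 0}_\delta) + 1)$, since a supremum of successors is by definition the least ordinal strictly above each summand; naturality of $\eta$ with respect to the embeddings $\iota_\delta^\gamma$ transfers the statement verbatim to the second order side. For condition~(2), I would first observe that minimality of $\alpha$ ensures the supremum clause of Definition~\ref{def:zfc_inverse_d_sequence} applies again at $\gamma = \alpha$, producing $\tilde{D}(\alpha) = A^{\tilde{D}, 0}_\alpha = \sup_{\delta < \alpha}(\tilde{D}(\iota_\delta^\alpha)(A^{\tilde{D}, 0}_\delta) + 1)$. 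Given any $\sigma \in D(\alpha)$, I would push $\sigma$ through $\eta_\alpha$, extract some $\delta < \alpha$ with $\eta_\alpha(\sigma) \leq \tilde{D}(\iota_\delta^\alpha)(A^{\tilde{D}, 0}_\delta)$, and pull back through $\eta$.

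Conditions~(3) and~(4) then come for free: the underlying order $\alpha$ is a genuine ordinal and hence well founded, so Lemma~\ref{lem:wf_fp_strong} automatically upgrades our $D$-sequence termination point to a strong one. The one delicate point, I expect, is the bookkeeping around the equality $A^{\tilde{D}, 0}_\alpha = \tilde{D}(\alpha)$: one has to check carefully that it arises via the supremum clause rather than the ``otherwise'' branch (which would demand a failure of $A^{\tilde{D}, 0}_\delta < \tilde{D}(\delta)$ for some $\delta < \alpha$, contradicting minimality of $\alpha$), so that the sup expansion is genuinely available for condition~(2). After this is handled, the remainder of the argument is routine naturality of $\eta$.
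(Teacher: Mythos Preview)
Your proposal is correct and follows essentially the same route as the paper: define $A^D_\gamma := \eta_\gamma^{-1}(A^{\tilde{D},0}_\gamma)$, verify conditions~(1) and~(2) by transporting the supremum characterization through the natural isomorphism $\eta$, and invoke Lemma~\ref{lem:wf_fp_strong} for conditions~(3) and~(4). Your explicit remark that minimality of $\alpha$ forces the supremum branch of Definition~\ref{def:zfc_inverse_d_sequence} at stage $\alpha$ is a point the paper leaves implicit, so your write-up is, if anything, slightly more careful there.
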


\begin{proof}
	We write $\eta: \restr{D}{\wo} \Rightarrow \tilde{D}$ for the natural isomorphism that connects the second order representation $D$ with its set-theoretic counterpart $\tilde{D}$. We define $A^D_\gamma := \eta^{-1}_\gamma(A^{\tilde{D}, 0}_\gamma)$ for all $\gamma < \alpha$. In light of Lemma \ref{lem:wf_fp_strong}, we only have to show that this constitutes a $D$-sequence termination point, i.e., conditions (3) and (4) can be skipped.
	
	For condition (1), we assume that $\gamma < \alpha$ is the smallest value such that $A^D_\gamma$ violates the requirements. First, assume that there exists some $\delta < \gamma$ such that $A^D_\gamma \leq D(\iota_\delta^\gamma)(A^D_\delta)$ holds. We conclude $A^{\tilde{D}, 0}_\gamma = \eta_\gamma(A^D_\gamma) \leq \eta_\gamma(D(\iota_\delta^\gamma)(A^D_\delta)) = \tilde{D}(\iota_\delta^\gamma)(\eta_\delta(A^D_\delta)) = \tilde{D}(\iota_\delta^\gamma)(A^{\tilde{D}, 0}_\delta) < \tilde{D}(\iota_\delta^\gamma)(A^{\tilde{D}, 0}_\delta) + 1$. Thus, we arrive at the contradiction that already $A^{\tilde{D}, 0}_\gamma$ was not the correct supremum to all $D(\iota_\delta^\gamma)(A^{\tilde{D}, 0}_\delta)$ for~$\delta < \gamma$.
	
	Otherwise, assume that there is a value $\sigma \in D(\gamma)$ strictly smaller than $A^D_\gamma$ with $\sigma > D(\iota_\delta^\gamma)(A^D_\delta)$ for all $\delta < \gamma$. We conclude $\eta_\gamma(\sigma) > \eta_\gamma(D(\iota_\delta^\gamma)(A^D_\delta)) = \tilde{D}(\iota_\delta^\gamma)(\eta_\delta(A^{\tilde{D}, 0}_\delta)) = \tilde{D}(\iota_\delta^\gamma)(A^D_\delta)$. This entails $\eta_\gamma(\sigma) \geq \tilde{D}(\iota_\delta^\gamma)(A^D_\delta) + 1$. Moreover, $\eta_\gamma(\sigma) < \eta_\gamma(A^D_\gamma) = A^{\tilde{D}, 0}_\gamma$ holds. Again, we are left with the contradiction that $A^{\tilde{D}, 0}_\gamma$ is not the correct supremum to all $D(\iota_\delta^\gamma)(A^{\tilde{D}, 0}_\delta)$ for $\delta < \gamma$.
	
	For condition (2), let $\sigma \in D(\alpha)$ be arbitrary. Since $A^{\tilde{D}, 0}_\alpha = D(\alpha)$, there is some $\gamma < \alpha$ with $\eta_\alpha(\sigma) < \tilde{D}(\iota_\gamma^\alpha)(A^{\tilde{D}, 0}_\gamma) + 1 \leq \tilde{D}(\iota_\gamma^\alpha)(\eta_\gamma(A^D_\gamma)) + 1 = \eta_\alpha(D(\iota_\gamma^\alpha)(A^D_\gamma)) + 1$. Since $\eta_\alpha$ is an embedding, this entails our claim $\sigma \leq D(\iota_\gamma^\alpha)(A^D_\gamma)$.
\end{proof}
Later, in Corollary \ref{cor:strong_fp_unique}, we will see that strong $D$-sequence termination points are unique. Thus, we also get the reversal of Lemma \ref{lem:zfc_fp_is_soa_fp}, i.e., in $\zfc$, any $D$-sequence termination point is isomorphic to $\nu(\tilde{D}, 0)$, where $\tilde{D}$ is the set-theoretic realization of the coded dilator~$D$.

The last notion that we introduce in this section is that of a homomorphism between two $D$-sequences. This will be of importance when we talk about the uniqueness of termination points since this property can, of course, only ever hold up to isomorphism.
\begin{definition}\label{def:d_seq_homomorphism}
	Given a predilator $D$ and two linear orders $X$ and $Y$, we call a map $f: X \to Y$ a \emph{$D$-sequence homomorphism} from a termination point realized by $(A^D_x)_{x \in X}$ to one given by $(B^D_y)_{y \in Y}$ if and only if $f$ is an embedding and satisfies the equality
	\begin{equation*}
		D(\restr{f}{(\restr{X}{x})})(A^D_x) = B^D_{f(x)}
	\end{equation*}
	for any $x \in X$. We call $f$ a $D$-sequence \emph{isomorphism} if and only if $f$ is an isomorphism of linear orders.
\end{definition}
With the final corollary of this section, we show that this notion of isomorphism is the right one:
\begin{corollary}[$\rca_0$]
	Consider a predilator $D$ and two linear orders $X$ and $Y$ with $D$-sequence termination points $(A^D_x)_{x \in X}$ and $(B^D_y)_{y \in Y}$ such that there is an isomorphism~${f: X \to Y}$ from the former termination point to the latter. Then, $f^{-1}$ is also an isomorphism from the latter termination point to the former one.
\end{corollary}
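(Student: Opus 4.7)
The plan is to exploit functoriality of $D$ and the fact that the restriction operation $\restr{(\cdot)}{(\restr{\cdot}{\cdot})}$ behaves well under composition. Fix $y \in Y$ and write $x := f^{-1}(y)$, so $f(x) = y$. I want to verify that $f^{-1}$ is a $D$-sequence homomorphism, i.e.\ that
\begin{equation*}
D(\restr{f^{-1}}{(\restr{Y}{y})})(B^D_y) = A^D_x\period
\end{equation*}
Since $f$ is an order isomorphism, it restricts to an order isomorphism $\restr{f}{(\restr{X}{x})} : \restr{X}{x} \to \restr{Y}{f(x)} = \restr{Y}{y}$, and its set-theoretic inverse is exactly $\restr{f^{-1}}{(\restr{Y}{y})} : \restr{Y}{y} \to \restr{X}{f^{-1}(y)} = \restr{X}{x}$, so the codomains match without any fuss.

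Now I invoke the assumption that $f$ is a $D$-sequence homomorphism at $x$ to get $D(\restr{f}{(\restr{X}{x})})(A^D_x) = B^D_{f(x)} = B^D_y$. Applying the endofunctor $D$ to the morphism $\restr{f^{-1}}{(\restr{Y}{y})}$ and composing, functoriality of $D$ yields
\begin{equation*}
D(\restr{f^{-1}}{(\restr{Y}{y})})(B^D_y) = D(\restr{f^{-1}}{(\restr{Y}{y})} \circ \restr{f}{(\restr{X}{x})})(A^D_x) = D(\id_{\restr{X}{x}})(A^D_x) = A^D_x\comma
\end{equation*}
which is exactly what was wanted. Since $f^{-1}$ is also an order isomorphism $Y \to X$, it is therefore a $D$-sequence isomorphism in the sense of Definition~\ref{def:d_seq_homomorphism}.

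The only conceivable obstacle is the bookkeeping around codomain restrictions — making sure that $\restr{f}{(\restr{X}{x})}$ and $\restr{f^{-1}}{(\restr{Y}{y})}$ are genuine two-sided inverses in $\lo$, rather than merely inverse as functions, so that the composition inside $D(-)$ really becomes $\id_{\restr{X}{x}}$ on the nose. This is built into the definition since $f$ is an order isomorphism and the codomain of each restriction is chosen to coincide with the image. Once that is observed, the argument is a one-line application of functoriality, and can be formalised in $\rca_0$ because it uses only finitary manipulations inside the coded predilator.
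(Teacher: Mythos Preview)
Your proof is correct and follows essentially the same approach as the paper's own argument: both fix $y\in Y$, set $x=f^{-1}(y)$, invoke the homomorphism equation for $f$ at $x$, and then use functoriality of $D$ together with the fact that $\restr{f^{-1}}{(\restr{Y}{y})}\circ\restr{f}{(\restr{X}{x})}$ is the identity on $\restr{X}{x}$ to conclude. The only cosmetic difference is that the paper writes this identity as $\iota_{\restr{X}{x}}^{\restr{X}{x}}$ whereas you write $\id_{\restr{X}{x}}$; your explicit remark about the codomain bookkeeping is a welcome clarification of exactly this point.
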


\begin{proof}
	First, it is clear that $f^{-1}: Y \to X$ is an isomorphism of linear orders. Thus, we only have to show that the property $D(\restr{f^{-1}}{(\restr{Y}{y})})(B^D_y) = A^D_{f^{-1}(y)}$ holds for every $y \in Y$.
	
	Let $y \in Y$ be arbitrary and define $x \in X$ to be the unique element with $f(x) = y$. Then, we have
	\begin{align*}
		D(\restr{f^{-1}}{(\restr{Y}{y})})(B^D_y) &= D(\restr{f^{-1}}{(\restr{Y}{y})})(B^D_{f(x)})\\
		&= D(\restr{f^{-1}}{(\restr{Y}{y})} \circ \restr{f}{(\restr{X}{x})})(A^D_{x})\\
		&= D(\restr{f^{-1}}{(\restr{Y}{f(x)})} \circ \restr{f}{(\restr{X}{x})})(A^D_{x})\\
		&= D(\iota_{\restr{X}{x}}^{\restr{X}{x}})(A^D_x) = A^D_x = A^D_{f^{-1}(y)}\period\qedhere
	\end{align*}
\end{proof}

\section{Parallels with $1$-fixed points}\label{sec:one_fixed_points}
In this section, we explore the deep connection between increasing $D$-sequences and $1$-fixed points. We begin with the definition of the more general notation of a $\nu$-fixed point where $\nu$ is an arbitrary well order. This concept is due to Freund and Rathjen and inspired by Buchholz's ordinal notation systems (cf.~\cite{Buchholz86}):

\begin{definition}[\cite{FR21}]\label{def:nu_fp}
	Consider a well order $\nu$ together with a predilator $D$. A \emph{$\nu$-collapse} of $D$ is a linear order $X$ together with an embedding $\pi: X \to \nu \times D(X)$ such that two properties hold:
	Let the relation $\triangleleft$ on $X$ be defined by the equivalence
	\begin{equation*}
		s \triangleleft t \quad :\Longleftrightarrow \quad s \in \supp_X(\tau) \text{ for } \pi(t) = (\alpha, \tau)\period
	\end{equation*}
	The first condition is that $\triangleleft$ respects some height function $h: X \to \n$, i.e., it satisfies $h(s) < h(t)$ for any elements $s, t \in X$ with $s \triangleleft t$.	
	
	With this, we can apply recursion along this relation $\triangleleft$ in order to define two functions $G^D_\gamma: X \to [D(X)]^{<\omega}$ and $G_\gamma: D(X) \to [D(X)]^{<\omega}$ simultaneously:
	\begin{align*}
		G^D_\gamma(t) &:=
		\begin{cases}
			\{\tau\} \cup G_\gamma(\tau) & \text{if } \pi(t) = (\alpha, \tau) \text{ with } \alpha \geq \gamma\comma\\
			\emptyset & \text{if } \pi(t) = (\alpha, \tau) \text{ with } \alpha < \gamma\comma
		\end{cases}\\
		G_\gamma(\tau) &:= \bigcup \{G^D_\gamma(s) \mid s \in \supp_X(\tau)\}\period
	\end{align*}
	The second condition is the so-called \emph{range condition}:
	\begin{equation*}
		\rng(\pi) = \{(\alpha, \tau) \in \nu \times D(X) \mid G_\alpha(\tau) <_{\fin} \tau\}\period
	\end{equation*}
	We call the order $X$ a $\nu$-fixed point of $D$ if an embedding $\pi$ with these properties exists.
\end{definition}
Given a dilator $D$, we write $\psi_\nu(D)$ and $\pi: \psi_\nu(D) \to D(\psi_\nu(D))$ for the canonical $\nu$-fixed point of $D$ and its embedding, respectively. The existence is due to Theorem~2.9 from \cite{FR21}. Moreover, we will sometimes make use of an order $\psi_\nu^+(D)$ with $\psi_\nu(D) \subseteq \psi_\nu^+(D)$ such that there exists an isomorphism $\pi^+: \psi_\nu^+(D) \to D(\psi_\nu^+(D))$ with $\pi^+(x) = (D(\iota_{\psi_1(D)}^{\psi_1^+(D)}) \circ \pi)(x)$ for all $x \in \psi_\nu(D)$.
For our purposes, we only need $\nu$-fixed points with $\nu := 1$. The next lemma shows that, in this case, the definition of $G_0$ can be simplified significantly:
\begin{lemma}[$\rca_0$]\label{lem:1_fp_simplyfied_G}
	Let $D$ be a predilator and $X$ a linear order.
	For the definition of $1$-fixed points, we identify the codomain $1 \times D(X)$ of $\pi$ with $D(X)$ in the canonical way. We arrive at a notion that is equivalent to $1$-fixed points if we replace our current definition of the function $G_0$ with the following one:
	\begin{equation*}
		G_0(\tau) := \{\pi(s) \mid s \in \supp_X(\tau)\}\period
	\end{equation*}
\end{lemma}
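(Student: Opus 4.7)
The plan is to keep the remaining data --- the embedding $\pi$, the height function $h$, and the relation $\triangleleft$ --- fixed, and show that the original and simplified range conditions cut out the same subset of $D(X)$. Write $G_0$ for the original recursive version and $G_0'(\tau) := \{\pi(s) \mid s \in \supp_X(\tau)\}$ for the simplified one. The starting observation is that when $\nu = 1$ the first clause of the mutual recursion is always taken, so $G^D_0(s) = \{\pi(s)\} \cup G_0(\pi(s))$ for every $s \in X$. In particular, whenever $s \in \supp_X(\tau)$ one has $\pi(s) \in G_0(\tau)$, yielding the pointwise inclusion $G_0'(\tau) \subseteq G_0(\tau)$ for every $\tau \in D(X)$.

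For the direction ``original range condition $\Rightarrow$ simplified'' the containment above already handles one half: any $\tau \in \rng(\pi)$ satisfies $G_0(\tau) <_{\fin} \tau$ and hence $G_0'(\tau) <_{\fin} \tau$. Conversely, if $G_0'(\tau) <_{\fin} \tau$ for some $\tau \in D(X)$, then $\pi(s) < \tau$ for each $s \in \supp_X(\tau)$; each such $\pi(s)$ lies in $\rng(\pi)$, so the original range condition yields $G_0(\pi(s)) <_{\fin} \pi(s) < \tau$, and combining these gives $G^D_0(s) <_{\fin} \tau$. Taking the union over $s \in \supp_X(\tau)$ gives $G_0(\tau) <_{\fin} \tau$, so $\tau \in \rng(\pi)$.

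The reverse direction ``simplified $\Rightarrow$ original'' is where the real work sits. Assuming the simplified range condition, I would prove by induction on $n \in \n$ that for every $t \in X$ with $h(t) \leq n$ one has $G_0(\pi(t)) <_{\fin} \pi(t)$. Given such a $t$ and some $s \in \supp_X(\pi(t))$, we have $s \triangleleft t$ and therefore $h(s) < h(t) \leq n$, so inductively $G_0(\pi(s)) <_{\fin} \pi(s)$; at the same time the simplified range condition applied to $\pi(t) \in \rng(\pi)$ forces $\pi(s) < \pi(t)$. These combine to $G^D_0(s) <_{\fin} \pi(t)$, and taking the union over $s$ yields $G_0(\pi(t)) <_{\fin} \pi(t)$. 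This establishes $\rng(\pi) \subseteq \{\tau \mid G_0(\tau) <_{\fin} \tau\}$; the reverse inclusion is immediate from $G_0'(\tau) \subseteq G_0(\tau)$ together with the assumed simplified range condition. The main (but still routine) obstacle is confirming that the induction can be carried out in $\rca_0$. Since $G_0$ is defined by primitive recursion along $\triangleleft$ with bound controlled by $h$, the values $G_0(\pi(t))$ are computed as finite sets in a $\Delta^0_1$ manner, and the induction formula ``for all $t \in X$ with $h(t) \leq n$, $G_0(\pi(t)) <_{\fin} \pi(t)$'' is of a complexity that the base system can accommodate, in the same spirit as the bookkeeping in the proof of Lemma~\ref{lem:wf_fp_strong}.
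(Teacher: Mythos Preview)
Your proof is correct and follows essentially the same route as the paper: the inclusion $G_0'(\tau) \subseteq G_0(\tau)$, together with induction along the height function to recover the original range condition from the simplified one. The paper packages the two directions into a single argument by first extracting the common consequence $\rng(\pi) \subseteq \{\tau \mid G_0'(\tau) <_{\fin} \tau\}$ from either hypothesis and then proving the two sets equal via a minimal-counterexample argument along $h$, but the substance is identical; if anything, your direct treatment of the ``original $\Rightarrow$ simplified'' direction is slightly cleaner, since applying the original range condition to each $\pi(s)$ avoids any induction there.
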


\begin{proof}
	Let $G'_0$ be the original definition of $G_0$ as it is given by Definition \ref{def:nu_fp}. Consider the sets
	\begin{equation*}
		A := \{\tau \in D(X) \mid G'_0(\tau) <_{\fin} \tau\} \quad \text{and} \quad B := \{\tau \in D(X) \mid G_0(\tau) <_{\fin} \tau\}\period
	\end{equation*}
	Under the assumption that our collapse $\pi: X \to D(X)$ already satisfies the first condition for $1$-fixed points, we want to show that $\rng(\pi) = A$ holds if and only if $\rng(\pi) = B$ does. Since $G_0(t)$ is always a subset of $G'_0(t)$ for $t \in X$, we have $A \subseteq B$. Thus, under any of both assumptions, we conclude $\rng(\pi) \subseteq B$.
	
	Now, we take this common conclusion $\rng(\pi) \subseteq B$ and show that it entails $A = B$, which results in our claim. We prove $B \subseteq A$: Assume, for contradiction, that there is some $\tau \in B \setminus A$. Using $\Delta^0_1$-induction along the height function provided by the first condition for $1$-fixed points, we can assume that $\tau$ is minimal with this property, i.e., for all $\pi(s) \in B$ with $s \in \supp_X(\tau)$, we already have $\pi(s) \in A$.
	
	Consider some element $\sigma \in G'_0(\tau)$. If $\sigma = \pi(s)$ holds for some $s \in \supp_X(\tau)$, then we have $\sigma \in G_0(\tau) <_{\fin} \tau$. Otherwise, if $\sigma \in G'_0(\pi(s))$ does hold for some $s \in \supp_X(\tau)$, then $\pi(s) \in \rng(\pi) \subseteq B$ already implies $\pi(s) \in A$ by assumption. Thus, we have $\sigma \in G'_0(\pi(s)) <_{\fin} \pi(s) \in G_0(\tau) <_{\fin} \tau$. We conclude $G'_0(\tau) <_{\fin} \tau$ and, therefore, the contradiction $\tau \in A$.
\end{proof}
We come to the central result of this paper: The following theorem makes the deep connection between strong $D$-sequence termination points and $1$-fixed points apparent in form of a one-to-one correspondence between the two concepts.
\begin{theorem}[$\rca_0$]\label{thm:d_sequence_fp_is_1_fp}
	For any predilator $D$ and any linear order $X$, the following are equivalent:
	\begin{enumerate}[label=(\alph*)]
		\item The linear order $X$ is a strong termination point of the $D$-sequence $(A_{x}^{D})_{x \in X}$.
		\item The linear order $X$ together with some embedding $\pi : X \to D(X)$ is a $1$-fixed point.
	\end{enumerate}
	Moreover, the correspondence between sequence and embedding in both directions of the proof can be given by
	\begin{equation}\label{eq:definition_pi_from_goodstein}
		\pi(x) = D(\iota_{\restr{X}{x}}^{X})(A_{x}^{D})\tag{$*$}
	\end{equation}
	for every $x \in X$.
\end{theorem}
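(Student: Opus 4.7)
My plan is to establish both directions of the equivalence by using the formula $\pi(x) = D(\iota_{\restr{X}{x}}^{X})(A_x^D)$ as a dictionary and showing that the four clauses in Definition~\ref{def:soa_d_sequence_fixed_point} translate precisely into the requirements for a $1$-fixed point in the simplified form of Lemma~\ref{lem:1_fp_simplyfied_G}: that $\pi$ is an embedding, that the relation $\triangleleft$ admits a height function, and that the range condition $\rng(\pi) = \{\tau \in D(X) \mid G_0(\tau) <_{\fin} \tau\}$ holds.

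For the direction $(a)\Rightarrow(b)$, I would first use condition (1) to verify that $\pi$ is strictly increasing, and observe that $\supp_X(\pi(t))$ coincides with $\supp_{\restr{X}{t}}(A_t^D)$ under the canonical inclusion, so the relation $\triangleleft$ agrees with the relation $\prec$ from condition (3) and the height function transfers directly. The inclusion $\rng(\pi) \subseteq \{\tau \mid G_0(\tau) <_{\fin} \tau\}$ is immediate since $\supp_X(\pi(x)) \subseteq \restr{X}{x}$. The converse inclusion is where the strong clause (4) is indispensable: given $\tau$ with $G_0(\tau) <_{\fin} \tau$, I let $x$ be the smallest element with $\supp_X(\tau) \leq_{\fin} x$ and $\tau \leq \pi(x)$ and claim $\tau = \pi(x)$. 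Assuming $\tau < \pi(x)$, if $x \in \supp_X(\tau)$ then $\pi(x) \in G_0(\tau) <_{\fin} \tau$ yields the contradictory $\pi(x) < \tau$; otherwise $\supp_X(\tau) <_{\fin} x$, so $\tau$ lifts to some $\tau' < A_x^D$ in $D(\restr{X}{x})$ and condition (1) furnishes $y < x$ with $\tau' \leq D(\iota_{\restr{X}{y}}^{\restr{X}{x}})(A_y^D)$. Setting $z := \max(\{y\} \cup \supp_X(\tau)) < x$ then produces an element strictly below $x$ that again satisfies both requirements of (4), contradicting minimality. This last splicing of conditions (1) and (4) is the most delicate step.

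For the direction $(b)\Rightarrow(a)$, the range condition applied to $\pi(x)$ itself gives $\supp_X(\pi(x)) \subseteq \restr{X}{x}$, so I can define $A_x^D$ as the unique preimage of $\pi(x)$ under $D(\iota_{\restr{X}{x}}^{X})$. Condition (3) is inherited verbatim from the height function provided by the $1$-fixed point. For (1), strict monotonicity of $\pi$ gives that $A_x^D$ strictly dominates each $D(\iota_{\restr{X}{y}}^{\restr{X}{x}})(A_y^D)$; for minimality I assume some $\beta < A_x^D$ in $D(\restr{X}{x})$ also dominates them, observe that $\tau := D(\iota_{\restr{X}{x}}^{X})(\beta)$ then satisfies $G_0(\tau) <_{\fin} \tau$, apply the range condition to obtain $w \in X$ with $\pi(w) = \tau$, and derive a contradiction using that $\pi$ is an embedding (since $\pi(w)$ would need to sit strictly between each $\pi(y)$ with $y < x$ and $\pi(x)$). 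For the strong clause (4), I split on whether $\sigma \in \rng(\pi)$: if yes, the preimage is the sought minimum; if no, the range condition forces some $s \in \supp_X(\sigma)$ with $\pi(s) \geq \sigma$, and then $m := \max(\supp_X(\sigma))$ is readily verified to be the minimum, since any $z < m$ fails $\supp_X(\sigma) \leq_{\fin} z$.

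The main obstacle in both directions is the interplay between the range condition of a $1$-fixed point (an equation describing which $\tau$ are hit by $\pi$) and the infimum clause (1) of a termination point: the forward direction uses (1) to locate a witness below $A_x^D$ and then promotes it to contradict minimality in (4), while the reverse direction needs (4) to pin down $A_x^D$ as the genuine infimum. Once this interplay is handled, the rest is a routine unpacking of definitions that can be carried out entirely in $\rca_0$.
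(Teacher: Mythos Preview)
Your proposal is correct and follows essentially the same approach as the paper's proof: in both directions you use the dictionary $\pi(x)=D(\iota_{\restr{X}{x}}^{X})(A_x^D)$, identify $\triangleleft$ with $\prec$ to transfer the height function, and handle the range condition by the same case split (your split on $x\in\supp_X(\tau)$ versus $x\notin\supp_X(\tau)$ is, given $\supp_X(\tau)\leq_{\fin} x$, the same as the paper's split on $\supp_X(\sigma)\geq_{\fin} x$ versus $\supp_X(\sigma)<_{\fin} x$), including the redefinition $z:=\max(\{y\}\cup\supp_X(\tau))$ to contradict minimality in~(4). The reverse direction is likewise organized identically, with your contradiction ``$\pi(w)$ sits strictly between each $\pi(y)$ and $\pi(x)$'' being just a rephrasing of the paper's observation that $\sigma=D(\iota_{\restr{X}{w}}^{\restr{X}{x}})(A_w^D)$ for some $w<x$.
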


\begin{proof}\mbox{}
	
	``(a) $\Rightarrow$ (b)'': Starting from the strong $D$-sequence, we construct the $1$-fixed point using $\pi$ as defined in (\ref{eq:definition_pi_from_goodstein}). We begin by showing that $\pi$ is an embedding. Let $x, y \in X$ be arbitrary with $x < y$, then:
	\begin{equation*}
		\pi(x) = D(\iota_{\restr{X}{x}}^{X})(A_{x}^{D}) = D(\iota_{\restr{X}{y}}^{X}) (D(\iota_{\restr{X}{x}}^{\restr{X}{y}})(A_{x}^{D})) < D(\iota_{\restr{X}{y}}^{X})(A_{y}^{D}) = \pi(y)\period
	\end{equation*}
	Here, we used the property that $A_{y}^{D}$ is greater than $D(\iota_{\restr{X}{x}}^{\restr{X}{y}})(A_{x}^{D})$, which can be derived from requirement (1).
	
	Next, we prove that the relation $\triangleleft$ has a compatible height function $h: \n \to X$. In fact, this is already given by requirement (3). Let $y \in X$ be arbitrary. Then we have the following:
	\begin{equation*}
		\supp_{X}(\pi(y)) = \supp_{X}(D(\iota_{\restr{X}{y}}^{X})(A_{y}^{D})) = \supp_{\restr{X}{y}}(A_{y}^{D})\period
	\end{equation*}
	Thus, we conclude $x \triangleleft y \Leftrightarrow x \prec y$ for all $x, y \in X$, which lets us simply copy the height function from the assumed $D$-sequence termination point.
	
	Now, we prove that $G_0(\pi(x)) <_{\fin} \pi(x)$ holds for any $x \in X$. We only need to make sure that $\pi(y) <_{D(X)} \pi(x)$ holds for any $y \in \supp_{X}(\pi(x))$. However, with $\supp_{X}(\pi(x)) <_{\fin} x$, we immediately have $y <_{X} x$ and, therefore, $\pi(y) <_{D(X)} \pi(x)$.
	
	Finally, we have to show that $G_0(\sigma) <_{\fin} \sigma$ for $\sigma \in D(X)$ already implies the existence of some $x \in X$ with $\sigma = \pi(x)$. Using requirement (4), let $x \in X$ be the smallest element satisfying both $\supp_{X}(\sigma) \leq_{\fin} x$ and $\sigma \leq \pi(x)$. We proceed by case distinction:
	
	If $\supp_{X}(\sigma) <_{\fin} x$, then we claim that already $\sigma = \pi(x)$ holds. In order to derive a contradiction, assume the strict inequality $\sigma < \pi(x) = D(\iota_{\restr{X}{x}}^{X})(A_{x}^{D})$. Using our assumption $\supp_{X}(\sigma) <_{\fin} x$, there is a unique $\tau \in D(\restr{X}{x})$ satisfying $\sigma = D(\iota_{\restr{X}{x}}^{X})(\tau)$. We conclude $\tau < A_{x}^{D}$. Moreover, assume that there is some $y < x$ with $\tau \leq D(\iota_{\restr{X}{y}}^{\restr{X}{x}})(A_{y}^{D})$. This yields $\sigma \leq D(\iota_{\restr{X}{y}}^{X})(A_{y}^{D}) = \pi(y)$. The element $y$ can be chosen to satisfy $\supp_{X}(\sigma) \leq_{\fin} y$ by simply redefining $y := \max(\{y\} \cup \supp_X(\sigma))$ if it should not be the case. Note that this keeps the property $y < x$ intact since we assumed $\supp_X(\sigma) <_{\fin} x$ to hold. But now, this entails that (4) wrongly claimed $x$ to be the smallest element with this property. We conclude that $\tau$ is an element in $D(\restr{X}{x})$ smaller than $A_{x}^{D}$ but greater than any $D(\iota_{\restr{X}{y}}^{\restr{X}{x}})(A_{y}^{D})$ for $y < x$. This contradicts (1), which claims $A_{x}^{D}$ to be the smallest element with this property. Finally, we see that our assumption $\sigma < D(\iota_{\restr{X}{x}}^{X})(A_{x}^{D})$ at the beginning must have been wrong, and we arrive at our claim $\sigma = D(\iota_{\restr{X}{x}}^{X})(A_{x}^{D}) = \pi(x)$.
	
	If $\supp_{X}(\sigma) \geq_{\fin} x$, then there must be some element $y \in \supp_{X}(\sigma)$ with $x \leq y$. We derive $\sigma \leq \pi(x) \leq \pi(y) \in G_0(\sigma)$. Clearly, this violates $G_0(\sigma) <_{\fin} \sigma$.
	
	``(b) $\Rightarrow$ (a)'': We begin by showing that $\supp_{X}(\pi(x)) <_{\fin} x$ holds for any $x \in X$. Assume that this is not the case for some $x \in X$. Then, we find $y \in \supp_{X}(\pi(x))$ with $x \leq y$. Since $\pi$ is an embedding, we conclude $\pi(x) \leq \pi(y)$. Finally, this violates $G_0(\pi(x)) <_{\fin} \pi(x)$. With this, we can define $A_{x}^{D}$ to be the unique element satisfying $\pi(x) = D(\iota_{\restr{X}{x}}^{X})(A_{x}^{D})$ for any $x \in X$. We continue by showing that this sequence $(A_{x}^{D})_{x \in X}$ satisfies all necessary requirements of a strong $D$-sequence termination point according to Definition \ref{def:soa_d_sequence_fixed_point}:
	
	Condition (1): By definition, it is clear that $A_{x}^{D}$ lives in $D(\restr{X}{x})$ for any $x \in X$. Next, we see that
	\begin{equation*}
		D(\iota_{\restr{X}{x}}^{X})(A_{x}^{D}) = \pi(x) > \pi(y) = D(\iota_{\restr{X}{y}}^{X})(A_{y}^{D}) = D(\iota_{\restr{X}{x}}^{X}) D(\iota_{\restr{X}{y}}^{\restr{X}{x}})(A_{y}^{D})
	\end{equation*}
	holds for any $ y <_{X} x$. Therefore, since $D(\iota_{\restr{X}{x}}^{X})$ is an embedding, we conclude $A_{x}^{D} > D(\iota_{\restr{X}{y}}^{\restr{X}{x}})(A_{y}^{D})$ for any $y <_{X} x$. Now, suppose that there is some element $\sigma \in D(\restr{X}{x})$ smaller than $A_{x}^{D}$ with the same property. Let $\tau := D(\iota_{\restr{X}{x}}^{X})(\sigma)$. We show $G_0(\tau) <_{\fin} \tau$. Assume, for contradiction, $G_0(\tau) \not<_{\fin} \tau$, then, there is some $\rho \in G_0(\tau)$ with $\rho \geq \tau$. From the definition of $G_0$, we conclude that $\rho = \pi(s)$ holds for some $s \in \supp_{X}(\tau)$.
	
	We proceed by case distinction: If we are in the case $s < x$, then we have
	\begin{equation*}
		D(\iota_{\restr{X}{x}}^{X})(\sigma) = \tau \leq \pi(s) = D(\iota_{\restr{X}{s}}^{X})(A_{s}^{D}) = D(\iota_{\restr{X}{x}}^{X}) D(\iota_{\restr{X}{s}}^{\restr{X}{x}})(A_{s}^{D})\period
	\end{equation*}
	Thus, $\sigma \leq D(\iota_{\restr{X}{s}}^{\restr{X}{x}})(A_{s}^{D})$ for $s < x$, which entails that $\sigma$ was not a counterexample to $A_{x}^{D}$ being the smallest element greater than each of $D(\iota_{\restr{X}{y}}^{\restr{X}{x}})(A_{y}^{D})$ for $y < x$.
	
	Otherwise, we are in the case $x \leq s$. But this clearly violates $\supp_{X}(\tau) <_{\fin} x$, which means that $\sigma$ could not have been an element of $D(\restr{X}{x})$. These contradictions yield $G_0(\tau) <_{\fin} \tau$. Now, the condition on the range of $\pi$ tells us that there is some element $y \in X$ with $\pi(y) = \tau$. Thus, we have
	\begin{equation*}
		D(\iota_{\restr{X}{x}}^{X}) (D(\iota_{\restr{X}{y}}^{\restr{X}{x}})(A_{y}^{D})) = D(\iota_{\restr{X}{y}}^{X})(A_{y}^{D}) = \pi(y) = \tau = D(\iota_{\restr{X}{x}}^{X})(\sigma)\period
	\end{equation*}
	We conclude $\sigma = D(\iota_{\restr{X}{y}}^{\restr{X}{x}})(A_{y}^{D})$, which again entails that $\sigma$ was not a counterexample to $A_{x}^{D}$.
	
	Conditions (2) and (4): Let $\sigma \in D(X)$ be arbitrary. We prove the conditions by case distinction on the truth of $G_0(\sigma) <_{\fin} \sigma$:
	
	If the relation holds, then there is some $x \in X$ with $\pi(x) = \sigma$. Moreover, we have $\supp_{X}(\sigma) <_{\fin} x$. Therefore, since $\pi$ is an embedding, $x$ is the smallest value satisfying both $\supp_{x}(\sigma) \leq_{\fin} x$ and $\sigma \leq \pi(x) = D(\iota_{\restr{X}{x}}^{X})(A_{x}^{D})$.
	
	If the relation does not hold, there is some $x \in \supp_{X}(\sigma)$ with $\pi(x) \geq \sigma$, just like above. Assume that $x$ is the largest element in $\supp_{X}(\sigma)$. Since $\pi$ is an embedding, $\pi(x) \geq \sigma$ still holds. This entails $\supp_{X}(\sigma) \leq_{\fin} x$ and $\sigma \leq \pi(x) = D(\iota_{\restr{X}{x}}^{X})(A_{x}^{D})$. Now, since $x$ is an element of $\supp_{X}(\sigma)$, any smaller candidate $y < x$ will not satisfy $\supp_{X}(\sigma) \leq_{\fin} y$. Therefore, $x$ is the claimed value.
	
	Condition (3): In the proof of direction ``(a) $\Rightarrow$ (b)'', we have already seen the equivalence $x \triangleleft y \Leftrightarrow x \prec y$ for $x, y \in X$. Therefore, if $\triangleleft$ has a height function, then so does $\prec$.
\end{proof}

\begin{remark}\label{rem:cond_1_cases}
	Inspection of the proof for condition (1) yields the following: Let $x \in X$ with $\sigma \in D(\restr{X}{x})$ be such that $\sigma < A^D_x$ holds. Then, we do not only know that there must be some $y < x$ with $\sigma \leq D(\iota_{\restr{X}{y}}^{\restr{X}{x}})(A^D_y)$ but also that $y$ can be chosen to satisfy one of the following:
	\begin{itemize}
		\item $\sigma = D(\iota_{\restr{X}{y}}^{\restr{X}{x}})(A^D_y)$, or
		\item $y \in \supp_X(D(\iota_{\restr{X}{x}}^X)(\sigma)) = \supp_{\restr{X}{x}}(\sigma)$.
	\end{itemize}
	This observation will play a key role in keeping the proof of Proposition \ref{prop:if_D_sequence_non_iso_fp_linear_orders} short.
\end{remark}
With Theorem \ref{thm:d_sequence_fp_is_1_fp}, we get existence and uniqueness results for free as they have already been proved for $1$-fixed points.
\begin{lemma}[$\rca_0$]\label{lem:unique_hom}
	For any predilator $D$, there is a unique homomorphism between any two strong $D$-sequence termination points.
\end{lemma}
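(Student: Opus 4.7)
The plan is to exploit Theorem \ref{thm:d_sequence_fp_is_1_fp} in order to transfer the claim to a statement about $1$-fixed points. Given two strong termination points $(X, (A^D_x)_{x \in X})$ and $(Y, (B^D_y)_{y \in Y})$, the theorem supplies embeddings $\pi_X \colon X \to D(X)$ and $\pi_Y \colon Y \to D(Y)$ exhibiting $X$ and $Y$ as $1$-fixed points of $D$. A short computation using (\ref{eq:definition_pi_from_goodstein}), functoriality of $D$, and the injectivity of $D(\iota_{\restr{Y}{f(x)}}^{Y})$ shows that an embedding $f \colon X \to Y$ is a $D$-sequence homomorphism in the sense of Definition \ref{def:d_seq_homomorphism} if and only if the naturality equation $\pi_Y \circ f = D(f) \circ \pi_X$ holds. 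So it suffices to prove existence and uniqueness of an embedding $f$ satisfying this equation.

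For uniqueness, I would use $\Sigma^0_1$-induction on the values of the height function $h \colon X \to \n$ provided by condition (3) of Definition \ref{def:soa_d_sequence_fixed_point}. If $f$ and $g$ both satisfy the naturality equation and $x \in X$ is fixed, every element of $\supp_X(\pi_X(x))$ stands in the relation $\prec$ to $x$ and thus has strictly smaller height. The induction hypothesis yields $f = g$ on this support, so $D(f)(\pi_X(x)) = D(g)(\pi_X(x))$ by the support condition, forcing $\pi_Y(f(x)) = \pi_Y(g(x))$, and hence $f(x) = g(x)$ since $\pi_Y$ is an embedding.

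For existence, I would recursively construct $f$ along the same height function, simultaneously verifying that $f$ remains an embedding. Assuming $f$ has been defined on all $s \in X$ with $h(s) < h(x)$, I set $\tau := D(f)(\pi_X(x)) \in D(Y)$, which is well defined by the support condition since only values of $f$ at elements $s \prec x$ enter. Using the simplified description of $G_0$ from Lemma \ref{lem:1_fp_simplyfied_G}, one has $G_0(\tau) = \{D(f)(\pi_X(s)) \mid s \prec x\}$ after applying the partial naturality equation already established by the recursion, and each such element is strictly below $\tau$ because $\pi_X(s) < \pi_X(x)$ and $D(f)$ preserves strict order. The range condition then provides a unique $y \in Y$ with $\pi_Y(y) = \tau$, which I take as $f(x)$. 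The main obstacle I anticipate is the bookkeeping needed to formalise this in $\rca_0$, but condition (3) of strongness is designed precisely so that the recursion can be carried out as primitive recursion on $\n$ via the height function, making the remaining verifications routine.
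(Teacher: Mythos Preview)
Your proposal is correct and follows essentially the same approach as the paper: both reduce the problem via Theorem~\ref{thm:d_sequence_fp_is_1_fp} to the existence and uniqueness of an embedding $f$ satisfying $\pi_Y \circ f = D(f) \circ \pi_X$ between the associated $1$-fixed points. The only difference is packaging: the paper invokes \cite[Proposition~2.1]{FR21} as a black box for this last step, whereas you unpack that proposition by giving the direct recursion/induction along the height function.
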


\begin{proof}
	Consider two $D$-sequence termination points given by the linear orders $X$ and $Y$ together with the sequences $(A^D_x)_{x \in X}$ and $(B^D_y)_{y \in Y}$, respectively. Let $(X, \pi)$ and $(Y, \kappa)$ be the corresponding $1$-fixed points given by Theorem \ref{thm:d_sequence_fp_is_1_fp}. For the proof of existence, Proposition 2.1 from \cite{FR21} yields an embedding $f: X \to Y$ satisfying $\kappa \circ f = D(f) \circ \pi$. For any $x \in X$, we have
	\begin{align*}
		D(\iota_{\restr{Y}{f(x)}}^{Y} \circ \restr{f}{(\restr{X}{x})})(A^D_x) &= D(f \circ \iota_{\restr{X}{x}}^X)(A^D_x) = D(f)(\pi(x)) = \kappa(f(x))\\
		&= D(\iota_{\restr{Y}{f(x)}}^{Y})(B^D_{f(x)})\period
	\end{align*}
	Since $D(\iota_{\restr{Y}{f(x)}}^{Y})$ is an embedding, we conclude $D(\restr{f}{(\restr{X}{x})})(A^D_x) = B^D_{f(x)}$ for any~$x \in X$.
	
	Now, for the uniqueness, let $f, g: X \to Y$ be two homomorphisms from the former to the latter $D$-sequence. Let $h \in \{f, g\}$ be one of them. For any $x \in X$, we have
	\begin{align*}
		D(h)(\pi(x)) &= D(h \circ \iota_{\restr{X}{x}}^{X})(A^D_x) = D(\iota_{\restr{Y}{h(x)}}^{Y} \circ \restr{h}{(\restr{X}{x})})(A^D_x) = D(\iota_{\restr{Y}{h(x)}}^{Y})(B^D_{h(x)})\\
		&= \kappa(h(x))\period
	\end{align*}
	Thus, we have $D(h) \circ \pi = \kappa \circ h$ for both choices of $h \in \{f, g\}$. Invoking Proposition 2.1 from \cite{FR21} again, we know that any such embedding must be unique.
\end{proof}

\begin{corollary}[$\rca_0$]\label{cor:strong_fp_unique}
	For any predilator $D$, there exists a strong $D$-sequence termination point $X$ which is unique up to isomorphism.
\end{corollary}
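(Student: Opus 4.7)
The plan is to combine Theorem~\ref{thm:d_sequence_fp_is_1_fp} with the existence of canonical $1$-fixed points and with the uniqueness of homomorphisms from Lemma~\ref{lem:unique_hom}. For the existence part, I would take the canonical $1$-fixed point $\psi_1(D)$ together with its embedding $\pi : \psi_1(D) \to D(\psi_1(D))$, whose existence is guaranteed by Theorem~2.9 of \cite{FR21}. Applying direction ``(b)~$\Rightarrow$~(a)'' of Theorem~\ref{thm:d_sequence_fp_is_1_fp} to this pair immediately yields a strong $D$-sequence termination point on the underlying order $\psi_1(D)$, with sequence defined via the formula $(\ref{eq:definition_pi_from_goodstein})$.

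For uniqueness up to isomorphism, suppose $(X, (A^D_x)_{x \in X})$ and $(Y, (B^D_y)_{y \in Y})$ are two strong $D$-sequence termination points. By Lemma~\ref{lem:unique_hom} there is a unique $D$-sequence homomorphism $f : X \to Y$ and, symmetrically, a unique $D$-sequence homomorphism $g : Y \to X$. The plan is a standard back-and-forth argument: the composition $g \circ f : X \to X$ is itself a $D$-sequence homomorphism from $(X,(A^D_x)_{x \in X})$ to itself, since embeddings compose to embeddings and the compatibility equation $D(\restr{(g\circ f)}{(\restr{X}{x})})(A^D_x) = A^D_x$ follows from functoriality of $D$ together with applying the compatibility equation once for $f$ and once for $g$. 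The identity on $X$ is also such a homomorphism, using that $\restr{\id_X}{(\restr{X}{x})}$ is the identity on $\restr{X}{x}$. By the uniqueness clause of Lemma~\ref{lem:unique_hom}, this forces $g \circ f = \id_X$, and symmetrically $f \circ g = \id_Y$. Hence $f$ is a bijective embedding of linear orders, i.e.~an isomorphism, and together with the compatibility equation it is an isomorphism of $D$-sequence termination points.

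There is no genuine obstacle here: all the conceptual work has already been carried out in Theorem~\ref{thm:d_sequence_fp_is_1_fp} (which reduces the existence question for strong termination points to the existence of $1$-fixed points) and in Lemma~\ref{lem:unique_hom} (which via the same correspondence imports uniqueness of embeddings between $1$-fixed points from Proposition~2.1 of~\cite{FR21}). The only thing to double-check is that $\rca_0$ suffices for the back-and-forth step, but this is immediate, as it only uses the compositional properties of homomorphisms and the uniqueness already proved.
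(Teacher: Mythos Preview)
Your proposal is correct and follows essentially the same argument as the paper: existence via Theorem~2.9 of \cite{FR21} combined with direction ``(b)~$\Rightarrow$~(a)'' of Theorem~\ref{thm:d_sequence_fp_is_1_fp}, and uniqueness by applying Lemma~\ref{lem:unique_hom} in both directions and then using its uniqueness clause to force $g\circ f=\id_X$ and $f\circ g=\id_Y$. A minor terminological quibble: what you call a ``back-and-forth argument'' is really just the standard categorical argument that mutually inverse morphisms arise from uniqueness of endomorphisms, not a Cantor-style back-and-forth construction.
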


\begin{proof}
	For the existence, we invoke Theorem 2.9 from \cite{FR21}, which yields a $1$-fixed point for any predilator $D$, that can be turned into a strong $D$-sequence termination point using the direction ``(b) $\Rightarrow$ (a)'' of our Theorem \ref{thm:d_sequence_fp_is_1_fp}.
	
	Now, for the uniqueness, consider some predilator $D$ together with two strong $D$-sequence termination points given by linear orders $X$ and $Y$ together with sequences $(A_{x}^{D})_{x \in X}$ and $(B_{y}^{D})_{y \in Y}$, respectively. Applying Lemma \ref{lem:unique_hom} twice, we get two $D$-sequence homomorphisms $f: X \to Y$ and $g: Y \to X$. Now, $g \circ f$ is a homomorphism from the \emph{former} $D$-sequence termination point to itself. Likewise, $f \circ g$ is a homomorphism from the \emph{latter} termination point to itself. By Lemma \ref{lem:unique_hom}, these must be unique. Finally, since already the identities on both $X$ and $Y$ are $D$-sequence homomorphisms, this entails that $g \circ f$ and $f \circ g$ must be the identities on $X$ and~$Y$, respectively. We conclude that $f$ (and also $g$) is a $D$-sequence isomorphism.
\end{proof}
We finish this section with a theorem that proves Weiermann's conjecture about the strength of Theorem \ref{thm:d_seq_terminates} from the introduction. By exploiting the connection with $1$-fixed points and the results due to Freund and Rathjen that do the heavy lifting for us, its proof only takes a few lines.
\begin{theorem}[$\rca_0$]\label{thm:pica_wf_fp}
	The following are equivalent:
	\begin{enumerate}[label=(\alph*)]
		\item $\Pi^1_1$-comprehension.
		\item Any dilator $D$ has a well founded (strong) $D$-sequence termination point.
	\end{enumerate}
\end{theorem}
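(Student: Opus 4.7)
The plan is to reduce the theorem directly to the known characterization of $\pica$ in terms of well foundedness of $1$-fixed points, due to Freund and Rathjen. Since Theorem~\ref{thm:d_sequence_fp_is_1_fp} establishes a one-to-one correspondence between strong $D$-sequence termination points and $1$-fixed points of $D$ via the formula $\pi(x) = D(\iota_{\restr{X}{x}}^X)(A^D_x)$, and this correspondence does not alter the underlying linear order $X$, well foundedness is preserved in both directions of the translation. Thus the content of statement~(b) is, up to a syntactic rewriting, exactly the statement that every dilator has a well founded $1$-fixed point.

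For the direction (a)~$\Rightarrow$~(b), I would argue as follows: assuming $\pica$, invoke the result of Freund and Rathjen (cf.~\cite{FR21}) that each dilator $D$ admits a well founded $1$-fixed point $(\psi_1(D), \pi)$. Then apply direction ``(b)~$\Rightarrow$~(a)'' of Theorem~\ref{thm:d_sequence_fp_is_1_fp} to $(\psi_1(D), \pi)$ in order to obtain a strong $D$-sequence termination point on the linear order $\psi_1(D)$, which is well founded by construction.

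For the reversal (b)~$\Rightarrow$~(a), start from a dilator $D$ and let $X$ together with a sequence $(A^D_x)_{x \in X}$ realize a well founded strong $D$-sequence termination point. Direction ``(a)~$\Rightarrow$~(b)'' of Theorem~\ref{thm:d_sequence_fp_is_1_fp} then supplies an embedding $\pi : X \to D(X)$ making $X$ a $1$-fixed point of $D$. Since $X$ is well founded, this produces a well founded $1$-fixed point of the arbitrary dilator $D$; applying the Freund--Rathjen reversal for $1$-fixed points (the case $\nu := 1$ of the equivalence discussed in the introduction) yields $\pica$.

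The main obstacle is purely bookkeeping: one must be careful that the formal equivalence being cited is precisely ``$\pica$ iff every dilator admits a well founded $1$-fixed point'' in the sense of Definition~\ref{def:nu_fp}, rather than some variant with a different collapsing function. Once this is confirmed, the theorem follows by a short sequence of invocations and no further combinatorial work is needed, which is why the proof can be expected to occupy only a few lines.
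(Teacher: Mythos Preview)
Your proposal is correct and follows essentially the same route as the paper: cite the Freund--Rathjen equivalence between $\pica$ and the existence of well founded $1$-fixed points (Theorem~1.6 of \cite{FR21}), and then transfer via Theorem~\ref{thm:d_sequence_fp_is_1_fp}. The only point the paper makes explicit that you leave implicit is the invocation of Lemma~\ref{lem:wf_fp_strong} to justify that a well founded termination point is automatically strong, so that Theorem~\ref{thm:d_sequence_fp_is_1_fp} applies in the direction (b)~$\Rightarrow$~(a); you assume ``well founded strong'' directly, which is harmless given how~(b) is phrased.
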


\begin{proof}
	Theorem 1.6 from \cite{FR21} proves over $\rca_0$ that $\Pi^1_1$-comprehension is equivalent to the statement that any dilator $D$ has a well founded $1$-fixed point. Combining this with our Theorem \ref{thm:d_sequence_fp_is_1_fp} yields the claim. Recall that, by Lemma \ref{lem:wf_fp_strong}, any well founded $D$-sequence termination point is already strong.
\end{proof}

\section{The inverse Goodstein sequence}\label{sec:inverse_Goodstein}
We begin this chapter with a definition of the Goodstein dilator in second order arithmetic. It is, of course, similar to Definition \ref{def:goodstein_dilator_set_theory}. We do this since we have to redefine it as an endofunctor on linear orders. Moreover, we call it \emph{predilator} instead of \emph{dilator} since it is not immediately clear in weak systems that this functor preserves well orders. In fact, this property is not provable in $\aca_0$, as we will see in Proposition \ref{prop:goodstein_dilator}.
\begin{definition}
	We define an endofunctor $G$ on the category of linear orders, as follows:
	Given a linear order $X$, the set $G(X)$ is given by terms
	\begin{equation*}
		\sigma := (1 + X)^{\gamma_0} \cdot (1 + \delta_0) + \dots + (1 + X)^{\gamma_{n-1}} \cdot (1 + \delta_{n-1})
	\end{equation*}
	for $\gamma_i \in G(X)$ and $\delta_i \in X$ with $i < n$. Moreover, we demand $\gamma_0 > \dots > \gamma_{n-1}$. The order is given by simultaneous recursion as follows: Consider another term
	\begin{equation*}
		\tau := (1 + X)^{\gamma'_0} \cdot (1 + \delta'_0) + \dots + (1 + X)^{\gamma'_{m-1}} \cdot (1 + \delta'_{m-1})
	\end{equation*}
	for $\gamma'_i \in G(X)$ and $\delta'_i \in X$ with $i < m$. We have $\sigma < \tau$ if and only if one of the following holds:
	\begin{enumerate}[label=(\roman*)]
		\item The term $\tau$ is a proper extension of $\sigma$, i.e., $m > n$ holds with $\gamma_i = \gamma'_i$ and $\delta_i = \delta'_i$ for all $i < n$.
		\item There is some $i < \min(n, m)$ with $\gamma_i < \gamma'_i$ or both $\gamma_i = \gamma'_i$ and $\delta_i < \delta'_i$. Moreover, we have $\gamma_j = \gamma'_j$ and $\delta_j = \delta'_j$ for all $j < i$.
	\end{enumerate}
	Given a morphism $f: X \to Y$ between two linear orders $X$ and $Y$, the morphism $G(f)$ maps our element $\sigma$ from above to
	\begin{equation*}
		G(f)(\sigma) := (1 + Y)^{G(f)(\gamma_0)} \cdot (1 + f(\delta_0)) + \dots + (1 + Y)^{G(f)(\gamma_{n-1})} \cdot (1 + f(\delta_{n-1}))\period
	\end{equation*}
	Finally, we define $\supp_X(\sigma) := \bigcup \{\supp_X(\gamma_i) \cup \{\delta_i\} \mid i < n\}$. We call $G$ the \emph{Goodstein predilator}. Moreover, the increasing $G$-sequence is also known as the \emph{inverse Goodstein sequence}.
\end{definition}
A standard argument shows that $G(X)$ is a linear order and that $G(f)$ is an embedding for any linear orders $X$ and $Y$ with $f: X \to Y$. With this, one can easily check in $\rca_0$ that $G$ is a predilator.
Moreover, the restriction of this predilator to well orders is naturally isomorphic to the dilator given by Definition~\ref{def:goodstein_dilator_set_theory} in the introduction.

As already mentioned, the system $\aca_0$ does not suffice for showing that $G$ is a dilator. For this, we need the slightly stronger system $\aca'_0$. First, recall that $\aca_0$ can be defined by adding the axiom that $X \mapsto \omega^X$ preserves well orders to~$\rca_0$ (cf.~\cite[Theorem~2.6]{Hirst94}).
\begin{definition}\label{def:exponentiation}
	Given a linear order $X$, we define a linear order $\omega^X$ as follows: The set is given by finite sequences $\langle x_0, \dots, x_{n-1} \rangle \in X^*$ with $x_i \in X$ for all $i < n$ and $x_i \geq x_j$ for all $i < j < n$.
	
	Given two sequences $x := \langle x_0, \dots, x_{n-1} \rangle$ and $y := \langle y_0, \dots, y_{m-1} \rangle$ in $\omega^X$, we have~$x < y$ if and only if one of the following holds:
	\begin{enumerate}[label=(\roman*)]
		\item The sequence $y$ is a proper extension of $x$, i.e., $m > n$ holds together with $x_i = y_i$ for all $i < n$.
		\item There is some $i < \min(n, m)$ with $x_i < y_i$ and $x_j = y_j$ for all $j < i$.
	\end{enumerate}
\end{definition}
Now, we use a definition of $\aca'_0$ that is given by an equivalence to the actual definition (cf.~\cite[Definition~4.2]{MM09} and \cite[Theorem~5.22]{MM11}).
\begin{definition}
	For any well order $X$, we define the order $\omega^{\langle n, X\rangle}$ for $n \in \n$ inductively as follows: $\omega^{\langle 0, X\rangle} := X$ and $\omega^{\langle n + 1, X\rangle} := \omega^{(\omega^{\langle n, X\rangle})}$.
	Over $\rca_0$, the system of $\aca'_0$ can be defined by adding the following principle as an axiom: $\omega^{\langle n, X\rangle}$ is well founded for any well order $X$ and number $n \in \n$.
\end{definition}

\begin{proposition}[$\rca_0$]\label{prop:goodstein_dilator}
	The following are equivalent:
	\begin{enumerate}[label=(\alph*)]
		\item $\aca'_0$
		\item The Goodstein predilator is a dilator.
	\end{enumerate}
\end{proposition}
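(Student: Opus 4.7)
My plan is to exploit a syntactic depth stratification of $G(X)$ and reduce well-foundedness of $G(X)$ to that of the iterated $\omega$-exponential, which is exactly the content of $\aca'_0$.

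Set $\mathrm{depth}(0) := 0$ and, for $\sigma := (1+X)^{\gamma_0}(1+\delta_0) + \dots + (1+X)^{\gamma_{n-1}}(1+\delta_{n-1})$, $\mathrm{depth}(\sigma) := 1 + \max_i \mathrm{depth}(\gamma_i)$; write $G_{\leq k}(X)$ for the suborder of terms of depth at most $k$. The first step, provable in $\rca_0$, is that depth is monotone under the order on $G(X)$: $\sigma \leq \tau$ implies $\mathrm{depth}(\sigma) \leq \mathrm{depth}(\tau)$. This goes by induction on $\mathrm{depth}(\tau)$ and a case analysis on the two comparison clauses (i) and (ii); once the first differing position is isolated, the strict decrease of exponents forces every later exponent of $\sigma$ to be bounded by an exponent of $\tau$ to which the inductive hypothesis applies. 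An immediate corollary is that every descending sequence in $G(X)$ is eventually contained in some $G_{\leq k}(X)$.

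For $(\mathrm{a}) \Rightarrow (\mathrm{b})$, assume $\aca'_0$ and a well order $X$. I would build, by induction on $k$, an order preserving embedding of $G_{\leq k}(X)$ into an order obtained from $1+X$ by $k$ applications of $Y \mapsto \omega^Y$, which is well founded by $\aca'_0$. The base $k=1$ uses the identification $G_{\leq 1}(X) \cong 1+X$ and an embedding $\alpha \mapsto \omega^\alpha$ into $\omega^{1+X}$; at the inductive step one replaces the outer base $(1+X)$ by $\omega$, applies the previous embedding to each exponent $\gamma_i$, and uses $1+X \hookrightarrow \omega^{\langle k-1, 1+X\rangle}$ to absorb the coefficient $1+\delta_i$ inside an $\omega$-power. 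Combined with the depth-monotonicity observation, this yields well-foundedness of $G(X)$, so $G$ is a dilator.

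For $(\mathrm{b}) \Rightarrow (\mathrm{a})$, assume $G$ is a dilator and let $X$ be a well order. I would construct by induction on $n$ an order preserving embedding $e_n \colon \omega^{\langle n, X\rangle} \hookrightarrow G(\omega + X)$: for $n = 0$, send $x \in X$ to the depth-$1$ term $(1+\omega+X)^{0}(1+(\omega+x))$; for the step, write an element of $\omega^{\langle n+1, X\rangle}$ in Cantor normal form $\omega^{a_0}m_0 + \dots + \omega^{a_{s-1}}m_{s-1}$, with strictly decreasing $a_i \in \omega^{\langle n, X\rangle}$ and finite multiplicities $m_i \geq 1$, and map it to $\sum_i (1+\omega+X)^{e_n(a_i)}(1 + (m_i - 1))$, interpreting $m_i - 1$ in $\omega \subseteq \omega + X$. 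Since $G$ is a dilator, $G(\omega+X)$ is well founded, so each $\omega^{\langle n, X\rangle}$ is well founded, which is the chosen axiomatization of $\aca'_0$.

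The main obstacle is the $(\mathrm{a}) \Rightarrow (\mathrm{b})$ embedding: translating base-$(1+X)$ Cantor normal forms, whose coefficients are transfinite elements of $1+X$, into base-$\omega$ forms inside $\omega^{\langle k, 1+X\rangle}$ requires a careful nested construction and a parallel induction to verify preservation of both comparison clauses. The depth-monotonicity lemma, while combinatorially elementary, is what licenses reducing well-foundedness of $G(X)$ to that of each $G_{\leq k}(X)$ and is the conceptual heart of the argument.
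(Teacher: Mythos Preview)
Your proposal is correct and follows essentially the same strategy as the paper: depth-stratify $G(X)$, prove depth is order-monotone, and embed each $G_{\leq k}(X)$ into an iterated $\omega$-exponential tower over a slight enlargement of $X$; the reverse direction likewise embeds $\omega^{\langle n, X\rangle}$ into $G$ applied to $X$ augmented by a copy of $\omega$. The paper's details differ only cosmetically---it uses $2n+1$ rather than $k$ levels of exponentiation (your count is a bit optimistic, since each inductive step must absorb both an exponent and a transfinite coefficient, costing two levels), adjoins a top element $\top$ to $X$ rather than working with $1+X$, and for the converse places the auxiliary copy of $\mathbb{N}$ above $X$ rather than below.
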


\begin{proof}
	``(a) $\Rightarrow$ (b)'': Given some well order $X$, we define $X' := X \cup \{\top\}$ where $\top$ is a new top element. Moreover, we define a family of functions: For every $n \in \n$, we define $f_n : X' \to \omega^{\langle 2n + 1, X'\rangle}$ as follows:
	\begin{equation*}
		f_0(x) := \langle x \rangle \quad \text{ and } \quad f_{n+1}(x) := \langle \langle f_n(x) \rangle \rangle\period
	\end{equation*}
	Using a simple induction, it can be seen that $f_n$ is an embedding for any $n \in \n$.
	Next, we have want to define functions $g_n : G(X) \to \omega^{\langle 2n + 1, X' \rangle}$ for every $n \in \n$ with $g_0 \equiv \langle\rangle$ and
	\begin{multline*}
		g_{n+1}((1+X)^{\gamma_0} \cdot (1 + \delta_0) + \dots + (1+X)^{\gamma_{m-1}} \cdot (1 + \delta_{m-1}))\\:= \langle \langle g_n(\gamma_0), f_n(\delta_0)\rangle, \dots, \langle g_n(\gamma_{m-1}), f_n(\delta_{m-1})\rangle, \langle f_n(\top) \rangle\rangle\period
	\end{multline*}
	We want to show that every $g_n$ is well defined, i.e., that every element in the range of $g_n$ satisfies the restriction imposed on elements in $\omega^{\langle 2n + 1, X' \rangle}$, and that is an embedding into $\omega^{\langle 2n + 1, X'\rangle}$. However, neither of these requirements hold at the moment, in general. We need to restrict the domains of $g_n$. For this restriction, we make use of a height function $h : G(X) \to \n$ with $h(0) := 1$ and
	\begin{equation*}
		h((1+X)^{\gamma_0} \cdot (1 + \delta_0) + \dots + (1+X)^{\gamma_{n-1}} \cdot (1 + \delta_{n-1})) := 1 + \max \{h(\gamma_i) \mid i < n\}\period 
	\end{equation*}
	Let us write $\restr{G(X)}{n}$ for the suborder of $G(X)$ that only consists of elements $\sigma \in G(X)$ with $h(\sigma) \leq n$. For each $n$, we restrict the domain of $g_n$ to $\restr{G(X)}{n}$.
	Using $\Pi^0_1$-induction, we show for every $n \in \n$ that $g_n$ (now restricted to arguments $\sigma$ with $h(\sigma) \leq n$) is an embedding into $\omega^{\langle 2n + 1, X'\rangle}$ and produces values that are greater or equal to anything in the range of $f_n$.
	
	For $n = 0$, there is no $\sigma \in G(X)$ with $h(\sigma) \leq n$. Assume $n > 0$ and let $\sigma$ be of the form $(1 + X)^{\gamma_0} \cdot (1 + \delta_0) + \dots + (1 + X)^{\gamma_{m-1}} \cdot (1 + \delta_{m-1})$ for ${\gamma_i \in G(X)}$ and $\delta_i \in X$ with~$i < m$. We begin by showing that $g_n$ maps $\sigma$ to an element of $\omega^{\langle 2n + 1, X' \rangle}$. For this, we begin by considering $\langle g_{n-1}(\gamma_i), f_{n-1}(\delta_i) \rangle$ for any $i < m$. By induction hypothesis, we know that $g_{n-1}(\gamma_i)$ must be greater or equal to $f_{n-1}(\delta_i)$. Moreover, we have that both values live in $\omega^{\langle 2n - 1, X'\rangle}$. We conclude $\langle g_{n-1}(\gamma_i), f_{n-1}(\delta_i) \rangle \in \omega^{\langle 2n, X' \rangle}$. Next, we consider two such elements $\langle g_{n-1}(\gamma_i), f_{n-1}(\delta_i) \rangle$ and $\langle g_{n-1}(\gamma_j), f_{n-1}(\delta_j) \rangle$ with $i < j < m$. Clearly, the former is greater or equal to the latter since $\gamma_i > \gamma_j$ holds and $g_{n-1}$ is an embedding on values with height less than $n$. Finally, $\langle g_{n-1}(\gamma_i), f_{n-1}(\delta_i) \rangle$ is greater than $\langle f_{n-1}(\top) \rangle$ since $g_{n-1}(\gamma_i)$ is greater or equal to $f_{n-1}(\top)$. We conclude that $g_n(\sigma)$ is an element of $\omega^{\langle 2n + 1, X' \rangle}$.
	
	Next, we want to show that $g_n(\sigma) \geq f_n(x)$ holds for any $x \in X'$. By definition, we have $g_n(\sigma) \geq \langle \langle f_{n-1}(\top) \rangle \rangle = f_n(\top) \geq f_n(x)$.
	The only remaining claim of our induction step is that $g_n$ is an embedding on elements of height less or equal to $n$. For this, consider a further element $\tau \in G(X)$ with $h(\tau) \leq n$ of the form $\tau = (1 + X)^{\gamma'_0} \cdot (1 + \delta'_0) + \dots + (1 + X)^{\gamma'_{k - 1}} \cdot (1 + \delta'_{k - 1})$ with $\gamma'_i \in G(X)$ and $\delta'_i \in X$ for $i < k$. Now, assume $\sigma < \tau$. If $\tau$ is an extension of $\sigma$, then $g_n(\sigma)$ and $g_n(\tau)$ coincide at the first $m$-many positions. At the $(m+1)$-th position, $g_n(\sigma)$ has the value $\langle f_{n-1}(\tau) \rangle$ and $g_n(\tau)$ has the value $\langle g_{n-1}(\gamma'_m), f_{n-1}(\delta'_m) \rangle$. From the previous considerations, we know that the former lies strictly below the latter.
	
	If $\tau$ is not an extension of $\sigma$, then there is some index $i < \min(m, k)$ such that we have $(1 + X)^{\gamma_i} \cdot (1 + \delta_i) < (1 + X)^{\gamma'_i} \cdot (1 + \delta'_i)$ and both $\gamma_j = \gamma'_j$ and $\delta_j = \delta'_j$ hold for all $j < i$. Hence, the first $i$-many positions of $g_n(\sigma)$ and $g_n(\tau)$ coincide. If we have $\gamma_i < \gamma'_i$, then $g_{n-1}$ being an embedding on values with height less than $n$ implies $g_{n-1}(\gamma_i) < g_{n-1}(\gamma'_i)$ and, therefore, $\langle g_{n-1}(\gamma_i), f_{n-1}(\delta_i) \rangle < \langle g_{n-1}(\gamma'_i), f_{n-1}(\delta'_i) \rangle$. Otherwise, if $\gamma_i = \gamma'_i$ and $\delta_i < \delta'_i$ hold, the same is implied by $f_{n-1}$ being an embedding. In both cases, we conclude $g_n(\sigma) < g_n(\tau)$.
	
	We want to make sure that there is a bound on the height of each element in a descending sequence in $G(X)$. For this, we show by induction along $h(\sigma) + h(\tau)$ for two terms $\sigma$ and $\tau$ of $G(X)$ that $h(\sigma) < h(\tau)$ implies $\sigma < \tau$. If $h(\sigma) = 1$, then we have both $\sigma = 0$ and $h(\tau) \neq 1$. Thus, $\tau > 0$ holds. Assume now that both $h(\sigma)$ and $h(\tau)$ are greater than $1$, i.e., both $\sigma$ and $\tau$ are greater than $0$. Using the previous explicit representations for $\sigma$ and $\tau$, we know by induction hypothesis both $h(\gamma_0) \geq \dots \geq h(\gamma_{m-1})$ and $h(\gamma'_0) \geq \dots \geq h(\gamma'_{k-1})$. Therefore, $h(\gamma_0) = h(\sigma) - 1 < h(\tau) - 1 = h(\gamma'_0)$. By induction hypothesis, we conclude $\gamma_0 < \gamma'_0$. Finally, this implies $\sigma < \tau$.
	
	For the final steps of this direction, consider some descending sequence $(\sigma_n)_{n \in \n}$ in $G(X)$. By the previous result, we know $h(\sigma_n) \leq h(\sigma_0)$ for any $n \in \n$. Therefore, we conclude that $(g_{h(\sigma_0)}(\sigma_n))_{n \in \n}$ is a descending sequence in $\omega^{\langle 2h(\sigma_0) + 1, X' \rangle}$. Thus, if $\aca'_0$ holds and $\omega^{\langle n, X' \rangle}$ is well founded for any $n \in \n$, then so is $G(X)$.
	
	``(b) $\Rightarrow$ (a)'': Let $X$ be a linear order. We want to show that $\omega^{\langle n, X \rangle}$ is well founded under the assumption that the Goodstein predilator is a dilator. Let $X + \n$ denote the disjoint union of $X$ and $\n$ such that we consider any $x \in X$ to be strictly smaller than any $n \in \n$ and we have two embeddings $\iota_0: X \to X + \n$ and $\iota_1: \n \to X + \n$ with disjoint ranges.
	We define a family $f_n: \omega^{\langle n, X \rangle} \to G(X)$ for $n \in \n$ as follows:
	\begin{align*}
		f_0(x) &:= (1 + X)^0 \cdot (1 + \iota_0(x))\comma\\
		f_{n+1}(\langle \rangle) &:= 0\period
	\intertext{Let $\sigma := \langle \sigma_0, \dots, \sigma_{m-1} \rangle \in \omega^{\langle n+1, X \rangle}$ be arbitrary with $m > 0$ and let $k \leq m$ be the greatest value such that $\sigma_i = \sigma_j$ holds for any $i < j < k$. Then, we set:}
		f_{n+1}(\sigma) &:= (1 + X)^{f_n(\sigma_0)} \cdot (1 + \iota_1(k)) + f_{n+1}(\langle \sigma_k, \dots, \sigma_{n-1} \rangle)\period
	\end{align*}
	Using $\Pi^0_1$-induction along $n$ and the usual arguments, we can see that $f_n$ is an embedding for any $n \in \n$:
	
	For $n = 0$, it is clear that $f_n$ is an embedding. Assume that our claim has already been shown for $n \in \n$ and let $\sigma := \langle \sigma_0, \dots, \sigma_{m-1} \rangle$ and $\tau := \langle \tau_0, \dots, \tau_{k-1} \rangle$ be arbitrary elements of $\omega^{\langle n+1, X \rangle}$ with $\sigma < \tau$. By side induction along $m + k$, we show that this entails $f_{n+1}(\sigma) < f_{n+1}(\tau)$. If $\sigma = \langle \rangle$ holds, our claim is clear. Assume that $\sigma$ and $\tau$ are both different from $\langle \rangle$ and let both $i \leq m$ and $j \leq k$ be maximal such that $\langle \sigma_0, \dots, \sigma_{i-1} \rangle$ and $\langle \tau_0, \dots, \tau_{j-1} \rangle$, respectively, only consist of members that are pairwise equal. By assumption, we conclude that both $i$ and $j$ must be positive. From $\sigma < \tau$, we know that $\sigma_0 \leq \tau_0$ must hold. If this inequality is strict, then by induction hypothesis, we have $f_n(\sigma_0) < f_n(\tau_0)$ and our claim follows immediately. Otherwise, we have $\sigma_0 = \tau_0$ and easily derive that $i \leq j$ must hold. If this inequality is strict, then we have $(1 + X)^{f_n(\sigma_0)} \cdot (1 + \iota_1(i)) < (1 + X)^{f_n(\tau_0)} \cdot (1 + \iota_1(j))$ and our claim follows. Otherwise, assume that $i = j$ holds. By definition of $i$ and $j$ together with their maximality, we see that $\sigma' := \langle \sigma_i, \dots, \sigma_{m-1} \rangle < \langle \tau_j, \dots, \tau_{k-1} \rangle =: \tau'$ must hold. By side induction, we find $f_{n+1}(\sigma') < f_{n+1}(\tau')$ and, hence, $f_{n+1}(\sigma) < f_{n+1}(\tau)$ by definition of $f_{n+1}$.
	
	Therefore, if the Goodstein predilator is a dilator, then $\omega^{\langle n, X \rangle}$ is well founded for any $n \in \n$ and the characteristic axioms of $\aca'_0$ are satisfied.
\end{proof}
\begin{corollary}[$\rca_0$]
	The system $\pica$ proves that the termination point of the inverse Goodstein sequence is well founded.
\end{corollary}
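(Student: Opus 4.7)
The plan is to chain together the three main results that the preceding sections have already put in place: the characterization of ``$G$ is a dilator'' via $\aca'_0$ (Proposition~\ref{prop:goodstein_dilator}), the reverse-mathematical equivalence between $\pica$ and the existence of well founded strong $D$-sequence termination points (Theorem~\ref{thm:pica_wf_fp}), and the uniqueness result for strong termination points (Corollary~\ref{cor:strong_fp_unique}).

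Concretely, I would first observe that $\pica$ proves $\aca'_0$, since the latter is a very weak arithmetical principle comfortably below the strength of $\Pi^1_1$-comprehension. Invoking Proposition~\ref{prop:goodstein_dilator} inside $\pica$, this gives that the Goodstein predilator $G$ is in fact a dilator. Next, I would apply direction ``(a) $\Rightarrow$ (b)'' of Theorem~\ref{thm:pica_wf_fp} to $G$, which yields a well founded strong $G$-sequence termination point $X$ together with its witnessing sequence $(A^G_x)_{x \in X}$. Since Corollary~\ref{cor:strong_fp_unique} guarantees that this strong termination point is unique up to $G$-sequence isomorphism, it is legitimate to speak of \emph{the} termination point of the inverse Goodstein sequence, and the order just produced realizes it.

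There is no real obstacle here beyond bookkeeping; the subtle point is only to justify referring to ``the'' termination point in the statement, which is precisely what the uniqueness corollary provides. I expect the entire write-up to consist of essentially one line combining Proposition~\ref{prop:goodstein_dilator}, Theorem~\ref{thm:pica_wf_fp}, and Corollary~\ref{cor:strong_fp_unique}.
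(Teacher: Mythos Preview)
Your proposal is correct and matches the paper's proof essentially line for line: the paper also notes that $\pica$ implies $\aca'_0$ and then combines Proposition~\ref{prop:goodstein_dilator} with Theorem~\ref{thm:pica_wf_fp}. Your additional invocation of Corollary~\ref{cor:strong_fp_unique} to justify the definite article is a nice bit of extra care that the paper leaves implicit.
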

\begin{proof}
	Since $\aca'_0$ is implied by $\pica$, this follows by combining Theorem \ref{thm:pica_wf_fp} and Proposition \ref{prop:goodstein_dilator}.
\end{proof}
Similar to Proposition \ref{prop:goodstein_bachmann}, we will show in Proposition \ref{prop:goodstein_bachmann_second_order} that the strong $G$-sequence termination point is \emph{equimorphic} to the Bachmann-Howard ordinal for the Goodstein predilator $G$, i.e., both linear orders embed into each other. In sufficiently strong systems, this entails an isomorphism.

In second order arithmetic, there are no unique representations of ordinals. Thus, we have to decide for a notation system. We use the one published by Rathjen and Weiermann in \cite{RW93} closely matching the presentation given in \cite{FreundSecond}. We choose this system because it is easy to work with, well established, and it gives us the opportunity to show how the connection between Bachmann-Howard fixed points and $1$-fixed points yields a general method for embedding Rathjen-style notation systems into Buchholz-style notation systems. We will also use a similar method in Section \ref{sec:weak_inverse_Goodstein}.
\begin{definition}
	We simultaneously define a set of terms $\ot$ with an order and a function $E: \ot \to [\ot]^{<\omega}$:
	\begin{enumerate}[label=(\roman*)]
		\item $\Omega \in \ot$.
		\item $\vartheta(\sigma) \in \ot$ for any $\sigma \in \ot$.
		\item $\omega^{\gamma_0} + \dots + \omega^{\gamma_{n-1}} \in \ot$ if $\gamma_i \in \ot$ for $i < n$ and $\gamma_i \geq \gamma_j$ for $i < j < n$ hold. Additionally, we require $n > 1$ if $n > 0$ and the first element $\gamma_0$ is equal to $\Omega$ or $\vartheta(\sigma)$ for some $\sigma \in \ot$.
	\end{enumerate}
	Instead of an empty sum in (iii) for $n = 0$, we often write $0$.
	Next, the definition of $E$ is given by
	\begin{equation*}
		E(\Omega) := \emptyset\comma \quad E(\vartheta(\sigma)) := \{\vartheta(\sigma)\}\comma \quad E(\omega^{\gamma_0} + \dots + \omega^{\gamma_{n-1}}) := \bigcup_{i < n} E(\gamma_i)\period
	\end{equation*}
	Finally, we define the order on $\ot$:
	\begin{itemize}
		\item If $\sigma := \Omega$, then $\sigma < \tau$ holds for $\tau \in \ot$ if and only if
		\begin{itemize}
			\item $\tau = \omega^{\gamma_0} + \dots + \omega^{\gamma_{n-1}}$ and $\Omega \leq \gamma_0$ hold.
		\end{itemize}
		\item If $\sigma := \vartheta(\sigma')$, then $\sigma < \tau$ holds for $\tau \in \ot$ if and only if
		\begin{itemize}
			\item $\tau = \Omega$ holds, or
			\item $\tau = \vartheta(\tau')$ holds together with one of
			\begin{itemize}
				\item $\sigma \leq_{\fin} E(\tau')$, or
				\item $\sigma < \tau$ and $E(\sigma) <_{\fin} \tau$, or
			\end{itemize}
			\item $\tau = \omega^{\gamma_0} + \dots + \omega^{\gamma_{m-1}}$ and $\sigma \leq \gamma_0$.
		\end{itemize}
		\item If $\sigma := \omega^{\gamma_0} + \dots + \omega^{\gamma_{n-1}}$, then $\sigma < \tau$ holds for $\tau \in \ot$ if and only if
		\begin{itemize}
			\item $\tau = \Omega$ with $\gamma_0 < \Omega$, or
			\item $\tau = \vartheta(\tau')$ with $\gamma_0 < \tau$, or
			\item $\tau = \omega^{\delta_0} + \dots + \omega^{\delta_{m-1}}$ with $\sigma < \tau$ like in Definition \ref{def:exponentiation}.
		\end{itemize}
	\end{itemize}
	We define $\bachmann$, our representation of the Bachmann-Howard ordinal, to be the suborder
	\begin{equation*}
		\bachmann := \{\sigma \in \ot \mid \sigma < \Omega\}\period
	\end{equation*}
	We call terms $\Omega$, $\vartheta(\sigma)$, and $\omega^\sigma$ for $\sigma \in \ot$ \emph{indecomposable}. All other terms are called \emph{decomposable}.
\end{definition}
While it is possible to prove statements about $\bachmann$ using structural induction, sometimes we need to apply induction on two terms at the same time. For cases like this, it is convenient to have a length function at hand. We define $l: \bachmann \to \n$ with
\begin{align*}
	l(\Omega) &:= 0\comma\\
	l(\vartheta(\sigma)) &:= l(\sigma) + 1\comma\\
	l(\omega^{\gamma_0} + \dots + \omega^{\gamma_{n-1}}) &:= 1 + \sum_{i < n} l(\gamma_i)
\end{align*}

One can show in $\rca_0$ that $\ot$ is a linear order and, in $\zfc$, that it is isomorphic to the Bachmann-Howard ordinal from set theory (cf.~\cite{Buchholz2017} for more on the relation between different such notation systems).
By defining $\omega^\Omega := \Omega$ and $\omega^{\vartheta(\sigma)} := \vartheta(\sigma)$ for $\sigma \in \ot$, we can write any term in $\ot$ as the sum of powers of $\omega$.
\begin{definition}[Addition, multiplication, exponentiation]
	Consider two terms $\sigma := \omega^{\gamma_0} + \dots + \omega^{\gamma_{n-1}}$ and $\tau := \omega^{\delta_0} + \dots + \omega^{\delta_{m-1}}$. We define addition:
	\begin{equation*}
		\sigma + \tau := \omega^{\gamma_0} + \dots + \omega^{\gamma_i} + \omega^{\delta_0} + \dots + \omega^{\delta_{m-1}}\comma
	\end{equation*}
	where $i < n$ is the largest index with $\gamma_i \geq \delta_0$ (or $i := -1$ if it does not exist).
	Moreover, we need multiplication and exponentiation but only if they are of the following form:
	\begin{align*}
		\Omega \cdot \sigma &:= \omega^{\Omega + \gamma_0} + \dots + \omega^{\Omega + \gamma_{n-1}}\comma\\
		\Omega^\sigma \cdot \tau &:= \omega^{\Omega \cdot \sigma + \delta_0} + \dots + \omega^{\Omega \cdot \sigma + \delta_{m-1}}\period
	\end{align*}
\end{definition}
\noindent
It can easily be seen in $\rca_0$ that addition is associative.

Let us restate Proposition \ref{prop:goodstein_bachmann} in the context of reverse mathematics. Instead of constructing an isomorphism, we are satisfied with an equimorphism. This is already enough to transfer the property of being well founded between the two~orders. If we assume that one of the orders is well founded, then $\atr_0$ tells us that both are isomorphic (cf.~\cite[Theorem~5.2]{FriedmanHirst}).
\begin{proposition}[$\rca_0$]\label{prop:goodstein_bachmann_second_order}
	Any strong $G$-sequence termination point of the Goodstein dilator $G$ is equimorphic to $\bachmann$. In particular, there is a well founded $G$-sequence termination point if and only if $\bachmann$ is well founded.
\end{proposition}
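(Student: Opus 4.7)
The plan is to use Theorem~\ref{thm:d_sequence_fp_is_1_fp} to reduce the claim to showing that the canonical $1$-fixed point $\psi_1(G)$ is equimorphic to $\bachmann$. By Corollary~\ref{cor:strong_fp_unique}, strong $G$-sequence termination points are unique up to isomorphism, so it suffices to exhibit embeddings in both directions between $\psi_1(G)$ and $\bachmann$. These embeddings will be constructed by mutually translating the Rathjen--Weiermann style $\vartheta$-terms and the Buchholz-style $\pi$-collapses.

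For the forward direction $\varphi \colon \bachmann \to \psi_1^+(G)$, I would recurse on the height function $h$ on $\ot$ and in parallel define an auxiliary translation $\hat{\cdot}$ that reads a term $\sigma \in \ot$ as an element of $G(\psi_1^+(G))$. The occurrences of $\Omega$ in $\sigma$ play the role of the formal symbol $1+X$ in the Goodstein dilator, so that an $\ot$-term $\omega^{\Omega\cdot\alpha + \delta_0} + \dots$ is mapped to a Cantor normal form $(1+\psi_1^+(G))^{\hat{\alpha}}\cdot(1+\widehat{\delta_0}) + \dots$ in $G(\psi_1^+(G))$. A subterm of the shape $\vartheta(\tau)$ appearing below $\Omega$ is handled by first translating $\tau$ recursively and then applying $(\pi^+)^{-1}$ to the resulting element of $G(\psi_1^+(G))$. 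For $\sigma < \Omega$, we set $\varphi(\sigma) := \hat{\sigma}$ with values landing in $\psi_1(G) \subseteq \psi_1^+(G)$. The crucial observation is that the side condition $E(\tau) <_{\fin} \vartheta(\tau)$ in the order clause for $\ot$ is, via this translation, exactly the range condition $G_0(\hat{\tau}) <_{\fin} \hat{\tau}$ that characterises $\rng(\pi^+)$, so that each legal $\vartheta$-term of $\bachmann$ actually lies in the image of $\pi^+$.

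For the reverse direction $\psi \colon \psi_1(G) \to \bachmann$, I would recurse along the height function on $\psi_1(G)$ guaranteed by Definition~\ref{def:nu_fp}. Given $x \in \psi_1(G)$, write
\begin{equation*}
    \pi(x) = (1+\psi_1(G))^{\gamma_0}\cdot(1+\delta_0) + \dots + (1+\psi_1(G))^{\gamma_{n-1}}\cdot(1+\delta_{n-1}),
\end{equation*}
where each $\delta_i \in \psi_1(G)$ and each $\gamma_i \in G(\psi_1(G))$ involves only elements of $\psi_1(G)$ of strictly smaller height. Recursively translate the $\delta_i$'s via $\psi$ and the $\gamma_i$'s via the inverse of the auxiliary translation above, then reassemble using $\Omega$-multiplication and $\omega$-sums to obtain a term $\check{\pi}(x) \in \ot$ containing $\Omega$. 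Finally set $\psi(x) := \vartheta(\check{\pi}(x))$; the range condition $G_0(\pi(x)) <_{\fin} \pi(x)$ translates into the side condition $E(\check{\pi}(x)) <_{\fin} \vartheta(\check{\pi}(x))$ needed for this to be a valid $\bachmann$-term.

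The main obstacle will be verifying order preservation. This reduces to a double induction on the sum of heights, matching each clause in the definition of $<$ on $\ot$ with a clause in the order on $G$ (the lexicographic comparison of Cantor normal forms) together with the embedding property of $\pi$ and $\pi^+$. The case analysis for $\vartheta(\sigma) < \vartheta(\tau)$ (with its two subclauses $\sigma \leq_{\fin} E(\tau)$ and $\sigma < \tau$ together with $E(\sigma) <_{\fin} \vartheta(\tau)$) is the most delicate, since it must be matched with the order on $\psi_1(G)$ induced by $\pi$ and with the range condition; here Remark~\ref{rem:cond_1_cases} should be helpful for disentangling which subcase of the $G$-order applies. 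Once both $\varphi$ and $\psi$ are established as embeddings, equimorphism follows, and the ``in particular'' clause is then immediate because well-foundedness transfers across embeddings in either direction.
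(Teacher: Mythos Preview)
Your outline differs substantially from the paper's argument, and the forward direction contains a genuine gap.

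The paper does not attempt a direct term-by-term translation between $\bachmann$ and $\psi_1(G)$. Instead, for the direction $\bachmann \hookrightarrow \psi_1(G)$ it first passes to an $\Omega$-normal form with \emph{collapsed coefficients} (Lemmas~\ref{lem:omega_normal} and~\ref{lem:embedding_omega_normal_collapsed}), obtaining a suborder $C \subseteq \mathfrak{C}$ consisting only of $\vartheta$-terms; then it shows that $C$ is an \emph{initial} Bachmann--Howard fixed point of $G$ and invokes the abstract results of \cite{FR21} (their Theorems~4.1 and~4.2) together with Lemma~\ref{lem:subset_of_psi_G_is_fp_of_omega_G}, which realises a $1$-fixed point of $\omega \circ G$ inside $\psi_1(G)$. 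The reverse direction is likewise obtained from \cite{FR21}, Theorem~4.1, rather than by an explicit translation.

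Your forward map $\hat{\cdot}$ breaks down at the coefficients. When you write an $\ot$-term as $\omega^{\Omega\cdot\alpha + \delta_0} + \dots$ and send it to $(1+\psi_1^+(G))^{\hat{\alpha}}\cdot(1+\widehat{\delta_0}) + \dots$, the value $\widehat{\delta_0}$ must be a single element of $\psi_1^+(G)$. But in $\ot$ the coefficient $\delta_0$ ranges over \emph{all} terms below $\Omega$: it may be a genuine sum such as $\vartheta(a) + \vartheta(b)$, or an $\omega$-power $\omega^{\vartheta(a)}$, not just a single $\vartheta$-term. Your rule ``apply $(\pi^+)^{-1}$ to $\vartheta$-subterms'' gives no value for such $\delta_0$, and there is no canonical way to collapse a sum below $\Omega$ to one element of $\psi_1^+(G)$ without additional coding. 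This is precisely the obstacle that Lemma~\ref{lem:embedding_omega_normal_collapsed} removes: it embeds $\bachmann$ into the $\vartheta$-terms of $\mathfrak{C}$, where every coefficient is already a single $\vartheta$-term. The construction there (the auxiliary function $g$, the encoding of natural numbers by nested $\vartheta$-terms, and the seven-clause simultaneous induction) is not a routine bookkeeping step and cannot be absorbed into your ``double induction on the sum of heights''.

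Your reverse map $\psi \colon \psi_1(G) \to \bachmann$ is more plausible and is, in spirit, what Theorem~4.1 of \cite{FR21} delivers abstractly; but note that the exponents $\gamma_i \in G(\psi_1(G))$ again require a recursive translation into $\ot$, and you have not specified how the resulting $\check{\pi}(x)$ avoids producing coefficients that fail to be admissible in $\ot$. The paper sidesteps this by staying at the level of fixed-point properties rather than term translations.
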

Whenever we have a result that involves an isomorphism or an equimorphism with a well-known ordinal like $\bachmann$, we can derive consequences from ordinal analysis as the following:
\begin{corollary}[$\rca_0$]
	The system of $\picaminus$, i.e.~$\aca_0$ together with the principle of $\Pi^1_1$-comprehension for formulas without set parameters, is unable to prove that the inverse Goodstein sequence terminates.
\end{corollary}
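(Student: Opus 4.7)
The plan is to reduce the claim to a standard fact from ordinal analysis, using Proposition~\ref{prop:goodstein_bachmann_second_order} as the bridge. I would formalize ``the inverse Goodstein sequence terminates'' in second order arithmetic as the statement that there exists a well founded (strong) $G$-sequence termination point, where $G$ is the Goodstein predilator. This is the natural reverse-mathematics rendering of Theorem~\ref{thm:d_seq_terminates} applied to~$G$, matching how the preceding corollary about $\pica$ was phrased.

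Next I would invoke Proposition~\ref{prop:goodstein_bachmann_second_order}: any strong $G$-sequence termination point is equimorphic to $\bachmann$. Since well-foundedness is inherited along embeddings, the existence of a well founded strong $G$-sequence termination point is equivalent, over $\rca_0$, to the well-foundedness of the linear order $\bachmann$ (our Rathjen--Weiermann notation for the Bachmann-Howard ordinal). Thus, if $\picaminus$ proved termination, it would also prove well-foundedness of $\bachmann$.

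The final step appeals to the proof-theoretic ordinal analysis of $\picaminus$: its proof-theoretic ordinal is precisely the Bachmann-Howard ordinal, and so $\picaminus$ cannot prove $\bachmann$ to be well founded. Combining this with the previous paragraph yields the desired unprovability. In terms of obstacles, there is nothing mathematically deep left once Proposition~\ref{prop:goodstein_bachmann_second_order} is in hand; the only subtlety is making sure that the second order formalization of termination matches the one for which the equimorphism was established, and that one cites the correct ordinal-analytic result (Rathjen's analysis of $\idone$, together with the conservativity bridge to $\picaminus$, as referenced earlier in the introduction). This is why the corollary is stated directly after Proposition~\ref{prop:goodstein_bachmann_second_order} and can be discharged in a single line: apply the proposition and the ordinal-analytic bound.
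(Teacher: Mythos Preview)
Your proposal is correct and matches the paper's own proof essentially line for line: invoke Proposition~\ref{prop:goodstein_bachmann_second_order} to reduce termination to the well-foundedness of $\bachmann$, then appeal to the fact that the proof-theoretic ordinal of $\picaminus$ (like that of $\idone$) is the Bachmann-Howard ordinal. The paper discharges this in two sentences, citing \cite{sep-proof-theory} for the ordinal-analytic input.
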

\begin{proof}
	Similar to $\idone$, which was used in the original result (cf.~\cite[Proposition~14]{Abrusci87} and \cite{AGV90}), the proof-theoretic ordinal of $\picaminus$ is also the Bachmann-Howard ordinal (cf.~\cite{sep-proof-theory}). Thus, Proposition \ref{prop:goodstein_bachmann_second_order} implies our claim.
\end{proof}
The proof of Proposition~\ref{prop:goodstein_bachmann_second_order} requires two embeddings: One from $\bachmann$ into our termination point and one from our termination point into $\bachmann$. The latter is rather short and will be given at the end of the section. Our main effort lies in proving the former. This proof consists of several steps and we shall now give a short overview of them in order to avoid confusion. The goal of this direction is an embedding from $\bachmann$ into $\psi_1(G)$, the canonical $1$-fixed point of the Goodstein dilator $G$, which is isomorphic to the termination point of $G$ by Theorem~\ref{thm:d_sequence_fp_is_1_fp}.
\begin{itemize}
	\item We begin by defining two normal forms for our notation system $\ot$ that are given by orders on terms $\mathfrak{N} \supset \mathfrak{C}$. (Definition~\ref{def:two_omega_normal_forms})
	\item We prove that any term in $\ot$ can be expressed in a normal form as term in $\mathfrak{N}$. (Lemma~\ref{lem:omega_normal})
	\item We construct an embedding from $\mathfrak{N}$ to $\mathfrak{C}$. With our previous result, this yields an embedding from $\ot$ to $\mathfrak{C}$. Moreover, we show that our function maps every element from $\bachmann \subset \ot$ into a certain suborder $C$ of $\mathfrak{C}$. (Lemma~\ref{lem:embedding_omega_normal_collapsed})
	\item We show that in the case of our considered dilator $G$, the order $\psi_1(\omega \circ G)$ can be embedded into $\psi_1(G)$. (Lemma~\ref{lem:subset_of_psi_G_is_fp_of_omega_G})
	\item We see that the suborder $C$, that $\bachmann$ embeds into, is a so-called \emph{initial Bachmann-Howard fixed point} of $G$. By its initiality, it embeds into ${\psi_1(\omega \circ G)}$, which is a Bachmann-Howard fixed point of $G$ by \cite[Theorem~4.2]{FR21}. Finally, since $\psi_1(\omega \circ G)$ embeds into $\psi_1(G)$, this yields our embedding from $\bachmann$ into $\psi_1(G)$.
\end{itemize}
We continue with two definitions of $\Omega$-normal forms for $\ot$, which we already mentioned in the first step of our overview:
\begin{definition}\label{def:two_omega_normal_forms}
	We consider the following normal forms for elements $\sigma \in \ot$:
	\begin{enumerate}[label=(\roman*)]
		\item $\Omega$-normal form:
		\begin{equation*}
			\sigma = \Omega^{\gamma_0} \cdot \delta_0 + \dots + \Omega^{\gamma_{n-1}} \cdot \delta_{n-1}\comma
		\end{equation*}
		where the sequence $\gamma_0, \dots, \gamma_{n-1}$ is strictly descending, $\gamma_i$ is also in $\Omega$-normal form and $0 < \delta_i < \Omega$ holds, for all $i < n$. Moreover, for any term ${\vartheta(\tau) \in E(\sigma)}$, $\tau$ must also be in $\Omega$-normal form. In the case of $\sigma = 0$, we still write $0$ instead of an empty sum.
		\item $\Omega$-normal form with collapsed coefficients,
		\\
		i.e., under the same restrictions as above but we add the following condition to the inductive definition: Every $\delta_i$ is a $\vartheta$-term for $i < n$.
	\end{enumerate}
	
	We interpret both normal forms as sets of terms $\mathfrak{N}$ and $\mathfrak{C}$, respectively. Moreover, we define linear preorders, i.e.~relations that are reflexive, transitive, and connected, on these sets by simply evaluating the terms to elements of $\ot$ and reflecting the order that we defined there.
\end{definition}

One can easily see that $\Omega$-normal forms (and, therefore, also $\Omega$-normal forms with collapsed coefficients) are unique, thus making the preorders on $\mathfrak{N}$ and $\mathfrak{C}$ proper linear orders.

\begin{lemma}[$\rca_0$]\label{lem:omega_normal}
	Any term of $\ot$ can be written in $\Omega$-normal form.
\end{lemma}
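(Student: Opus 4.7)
The plan is to induct on the height $h(\sigma)$ of $\sigma \in \ot$. The base cases are immediate: $\sigma = 0$ is the empty sum, $\sigma = \Omega$ becomes $\Omega^1 \cdot 1$ with $1 = \Omega^0 \cdot 1$, and $\sigma = \vartheta(\tau)$ becomes $\Omega^0 \cdot \vartheta(\hat\tau)$, where $\hat\tau$ is the $\Omega$-normal form of the strictly-lower-height term $\tau$ furnished by the induction hypothesis. The substantive case is $\sigma = \omega^{\gamma_0} + \dots + \omega^{\gamma_{n-1}}$: each $\gamma_i$ has a normal form $\hat\gamma_i$ by the induction hypothesis, and the plan is to reduce to two auxiliary closure claims — (I) if $\gamma$ has an $\Omega$-normal form, then so does $\omega^\gamma$; and (II) the sum of two $\Omega$-normal forms can itself be rewritten in $\Omega$-normal form. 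Granted both, the inductive step follows by applying (I) to each $\omega^{\gamma_i}$ and iterating (II).

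Claim (II) is a routine syntactic manipulation. Given $\sigma_1, \sigma_2 \in \mathfrak{N}$, one truncates those summands of $\sigma_1$ whose $\Omega$-exponents drop strictly below the leading exponent of $\sigma_2$, merges a possibly-matching exponent by adding the corresponding $\delta$-coefficients — which remain below $\Omega$ because $\Omega$ is closed under ordinal addition — and appends the remainder of $\sigma_2$. Claim (I), the main obstacle, proceeds by case analysis on $\hat\gamma = \Omega^{\alpha_0} \cdot \beta_0 + \dots + \Omega^{\alpha_{k-1}} \cdot \beta_{k-1}$. If $\alpha_0 = 0$ (forcing $k = 1$ and $\gamma < \Omega$), then $\omega^\gamma < \Omega$ and $\Omega^0 \cdot \omega^\gamma$ is the required normal form. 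Otherwise $\gamma \geq \Omega$, and I decompose $\gamma = \Omega \cdot \xi + \eta$ with $\eta < \Omega$ as follows: set $\eta := \beta_{k-1}$ if $\alpha_{k-1} = 0$, otherwise $\eta := 0$; and construct $\xi$ so that $\Omega \cdot \xi$ reconstitutes the all-positive-$\alpha$ part of $\hat\gamma$. The ordinal identity $\omega^{\Omega \cdot \xi + \eta} = \Omega^\xi \cdot \omega^\eta$ then yields the normal form $\Omega^\xi \cdot \omega^\eta$ for $\omega^\gamma$.

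The construction of $\xi$ is the delicate syntactic point, and I expect it to be the main obstacle. Using the identity $\Omega \cdot (\Omega^a \cdot b) = \Omega^{1+a} \cdot b$, which unfolds directly from the definitions of $\Omega \cdot \sigma$ and $\Omega^\sigma \cdot \tau$, one must solve $1 + a_i = \alpha_i$ in $\ot$ for each surviving summand of $\hat\gamma$. When $\alpha_i$ is a finite term (a sum of copies of $\omega^0$), one takes $a_i := \alpha_i - 1$; when $\alpha_i \geq \omega$, the identity $1 + \alpha_i = \alpha_i$ permits $a_i := \alpha_i$. A short check — using that a finite $\alpha_i$ cannot be followed by an infinite $\alpha_{i+1}$ in a strictly descending sequence — confirms the $a_i$ remain strictly descending. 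The hard part is precisely this syntactic management of ``division by $\Omega$'' at limit exponents, where literal predecessor-taking is unavailable in $\ot$. The remaining bookkeeping, including the inheritance of the $\vartheta$-subterm normalisation condition for every sub-coefficient, is straightforward because the construction only rearranges pieces already present in $\hat\gamma$.
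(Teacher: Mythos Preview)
Your proof is correct and follows essentially the same idea as the paper's: both arguments induct on the height $h$ and rely on the key identity $\omega^{\Omega\cdot\xi+\eta}=\Omega^{\xi}\cdot\omega^{\eta}$ for $\eta<\Omega$, reducing the problem for $\omega^{\gamma}$ to producing a suitable $\Omega$-quotient $\xi$ in $\Omega$-normal form.

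The organisation differs slightly. The paper first proves two auxiliary facts directly in $\ot$ --- that every $\gamma\geq\Omega$ can be written as $\Omega+\delta$, and hence that every $\gamma$ decomposes as $\Omega\cdot\delta+\rho$ with $\rho<\Omega$ --- and only then invokes the height induction to normalise the resulting quotient $\delta$ (noting $h(\delta)<h(\gamma)$). You instead invoke the height induction immediately on each $\gamma_i$, and then carry out the ``division by $\Omega$'' at the level of already-normalised terms, solving $1+a_i=\alpha_i$ summand-by-summand. Your route trades the paper's preparatory lemmas for the explicit case-split on whether $\alpha_i$ is finite or $\geq\omega$; the paper's route avoids that syntactic bookkeeping but needs the separate height estimate for the quotient. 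Neither approach is materially shorter, and both establish the same recursion.
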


\begin{proof}
	For $\gamma = 0$, we simply consider the empty sum. For $\gamma > 0$, we prove this in three steps. First, we show that any $\gamma \in \ot$ with $\gamma \geq \Omega$ can be written as $\gamma = \Omega + \delta$ for some $\delta \in \ot$. If $\gamma$ is indecomposable, then we have either $\gamma = \Omega$ or $\gamma = \omega^{\rho}$ for $\rho > \Omega$. We choose $\delta := 0$ or $\delta := \gamma$, respectively. If $\gamma$ is decomposable, then we have $\gamma = \gamma_0 + \gamma_1$ for $\gamma_0, \gamma_1 \in \ot$ with $\Omega \leq \gamma_0$ and $l(\gamma_0) < l(\gamma)$. Thus, we know by induction on the length of terms that there is some $\delta_0 \in \ot$ with $\gamma_0 = \Omega + \delta_0$. We conclude $\gamma = \Omega + (\delta_0 + \gamma_1)$.
	
	Secondly, we show that any $\gamma \in \ot$ can be written as $\gamma = \Omega \cdot \delta + \rho$ for ${\delta, \rho \in \ot}$ with $\rho < \Omega$. This is clear for values of $\gamma$ below $\Omega$. Otherwise, write $\gamma = \omega^{\rho_0} + \dots + \omega^{\rho_{n-1}}$ and let $i \leq n$ be the greatest value such that $\rho_j \geq \Omega$ holds for all $j < i$. Using the result from above, we can write any such $\rho_j$ as the sum $\Omega + \rho'_j$ with $\rho'_j \in \ot$ for $j < i$. We conclude $\gamma = \Omega \cdot (\omega^{\rho'_0} + \dots + \omega^{\rho'_{i-1}}) + \omega^{\rho_{i}} + \dots + \omega^{\rho_{n-1}}$.
	
	Thirdly, we show the claim. Any value $\gamma \in \ot$ with $0 < \gamma < \Omega$ can be written as $\Omega^0 \cdot \gamma$. Also, $\Omega$ can be expressed as $\Omega^{\Omega^0 \cdot 1} \cdot 1$. Now, if $\gamma$ is decomposable and, therefore, of the form $\omega^{\rho_0} + \dots + \omega^{\rho_{n-1}}$ for $n > 1$, then let $i \leq n$ be the greatest value such that $\rho_j \geq \Omega$ holds for all $j < i$, just like above. Using the previous paragraph, we can write any $\rho_j$ as $\Omega \cdot \delta_j + \eta_j$ with $\eta_j < \Omega$ for $j < i$. It is important for the later induction that we have $l(\delta_j) < l(\gamma)$ for $j < i$. Moreover, we calculate $\omega^{\rho_j} = \omega^{\Omega \cdot \delta_j + \eta_j} = \Omega^{\delta_j} \cdot \omega^{\eta_j}$ for $j < i$. Thus, we can write $\gamma$ as
	\begin{equation*}
		\gamma = \Omega^{\delta_0} \cdot \omega^{\eta_0} + \dots + \Omega^{\delta_{i-1}} \cdot \omega^{\eta_{i-1}} + \Omega^0 \cdot \omega^{\rho_{i}} + \dots + \Omega^0 \cdot \omega^{\rho_{n-1}}
	\end{equation*}
	with $\delta_j \geq \delta_k$ for $j < k < i$, $\eta_j < \Omega$ for $j < i$, and $\rho_{j} < \Omega$ for $j$ with $i \leq j < n$. Using our definition of multiplication and by applying this procedure inductively (along the length of terms) on $\delta_j$ for $j < i$ and all terms $\tau$ with $\vartheta(\tau) \in E(\gamma)$, this yields our claim.
\end{proof}

\begin{lemma}[$\rca_0$]\label{lem:embedding_omega_normal_collapsed}
	There is an embedding from $\ot$ into $\mathfrak{C}$ and an embedding from $\bachmann$ into the suborder of all $\vartheta$-terms in $\mathfrak{C}$.\footnote{In the context of $\mathfrak{N}$ or $\mathfrak{C}$, a ``$\vartheta$-term'' is to be understood as a term of the form $\Omega^0 \cdot \vartheta(\delta)$ for some $\delta \in \mathfrak{N}$ or $\delta \in \mathfrak{C}$, respectively.}
\end{lemma}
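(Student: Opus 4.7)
The strategy is to exploit Lemma~\ref{lem:omega_normal} together with the uniqueness of $\Omega$-normal forms, which gives an order isomorphism between $\ot$ and $\mathfrak{N}$: every term of $\ot$ corresponds to a unique normal form in $\mathfrak{N}$. It therefore suffices to construct an order-preserving map $c \colon \mathfrak{N} \to \mathfrak{C}$ that sends elements originally below $\Omega$ into the set of $\vartheta$-terms of $\mathfrak{C}$.

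I would define $c$ by recursion on the height of terms, together with an auxiliary collapse $k$ on coefficients below $\Omega$. For $\sigma = 0$, set $c(\sigma) := 0$; for $0 < \sigma < \Omega$, set $c(\sigma) := k(\sigma)$; and for a non-zero normal form $\sigma = \Omega^{\gamma_0} \cdot \delta_0 + \dots + \Omega^{\gamma_{n-1}} \cdot \delta_{n-1}$ with $\gamma_0 > 0$, set
\[ c(\sigma) := \Omega^{c(\gamma_0)} \cdot k(\delta_0) + \dots + \Omega^{c(\gamma_{n-1})} \cdot k(\delta_{n-1}). \]
The map $k$ on coefficients should take the form $k(\delta) := \vartheta(g(\delta))$ where $g(\delta) \in \mathfrak{C}$ is a carefully chosen lift of $\delta$ (naturally involving $\Omega$ and the $\vartheta$-subterms of $\delta$) designed to be strictly monotone and to produce tightly controlled $E$-sets. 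The recursion is well founded because $g(\delta)$ is built from pieces of strictly smaller height than $\delta$ itself, and in particular $c$ is only called on the $\gamma_i$ and on strictly smaller sub-coefficients.

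The main work is a simultaneous induction on the sum of heights of the two compared terms to show that $c$ is order-preserving. The cases where the two normal forms differ at a $\gamma_i$, or where one is a proper extension of the other, reduce directly to the inductive hypothesis for $c$ applied to the $\gamma_i$ and to the inner terms of $\vartheta$-subterms. The heart of the argument is the coefficient case: showing that $\delta_1 < \delta_2$ in $(0, \Omega)$ implies $\vartheta(g(\delta_1)) < \vartheta(g(\delta_2))$. Here one invokes the second clause of the $\vartheta$-order: $g(\delta_1) < g(\delta_2)$ follows from strict monotonicity of $g$, while the required $E$-set condition $E(g(\delta_1)) <_{\fin} \vartheta(g(\delta_2))$ is established using the inductive hypothesis applied to the $\vartheta$-subterms of $g(\delta_1)$, which all arise from coefficients of $\delta_1$ of smaller height and hence are below $\vartheta(g(\delta_2))$ by a lower-height instance of the same monotonicity statement.

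For the second claim, if $\sigma \in \bachmann$ then $\sigma < \Omega$, which forces its $\Omega$-normal form to be either $0$ or $\Omega^0 \cdot \delta_0$ for a single coefficient $0 < \delta_0 < \Omega$; in the latter case $c(\sigma) = k(\delta_0)$ is by construction a $\vartheta$-term of $\mathfrak{C}$. The main obstacle is the concrete design of $g$: it must both yield an element of $\mathfrak{C}$ and be tuned so that the $\vartheta$-subterms controlled by the induction hypothesis suffice to verify the second clause of the $\vartheta$-order. This is the standard but delicate verification step shared with Rathjen--Weiermann style collapsing arguments, and I expect its bookkeeping -- rather than any deep conceptual issue -- to be where most of the proof's length lies.
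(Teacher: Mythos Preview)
Your overall strategy coincides with the paper's: reduce to an embedding $\mathfrak{N}\to\mathfrak{C}$ via Lemma~\ref{lem:omega_normal}, define it by mutual recursion with an auxiliary collapse on coefficients below~$\Omega$, and prove monotonicity by simultaneous induction on the sum of heights. However, two genuine gaps remain.

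First, the part you flag as ``bookkeeping'' is in fact the key idea. The paper's auxiliary map $g$ is not just a ``carefully chosen lift'': it uses explicit tags $\overline{0},\overline{1},\overline{2}$ (nested $\vartheta$-terms coding small numbers) on the $\Omega$-exponents, and crucially sets $f(0):=\Omega^{\overline{3}}$ rather than $0$. This shift guarantees that everything in the range of $g$ lies strictly below everything in the range of $f$, which is exactly what makes the $E$-set conditions go through. Your choice $c(0):=0$ loses this separation, and incidentally breaks the second claim at the element $0\in\bachmann$, since $c(0)=0$ is not a $\vartheta$-term.

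Second, your induction sketch for the coefficient case invokes only the second clause of the $\vartheta$-order (namely $g(\delta_1)<g(\delta_2)$ together with $E(g(\delta_1))<_{\fin}\vartheta(g(\delta_2))$). But $\vartheta(\sigma)<\vartheta(\tau)$ can also hold via the first clause $\vartheta(\sigma)\leq_{\fin} E(\tau)$, and transporting \emph{that} inequality through the construction requires a separate argument. The paper handles this with two additional inductive claims (its items (e) and (f)): from $\vartheta(\sigma)\leq_{\fin} E(\tau)$ one must derive $\vartheta(f(\sigma))\leq_{\fin} E(f(\tau))$, and this in turn forces the simultaneous induction to carry seven interlocking statements rather than the two or three your outline suggests. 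Without this, the induction does not close.
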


\begin{proof}
	Recall that $\mathfrak{N}$ is the order of terms written in $\Omega$-normal form. Moreover, $\mathfrak{C}$ is the order of terms written in $\Omega$-normal form with collapsed coefficients. We construct an embedding $f : \mathfrak{N} \to \mathfrak{C}$. By Lemma \ref{lem:omega_normal}, there is an embedding from $\ot$ into $\mathfrak{N}$. If we compose $f$ after this embedding, the resulting map yields our first claim.
	
	For the definition of $f$, we employ a second function $g : \mathfrak{M} \to \mathfrak{C}$ where $\mathfrak{M}$ is the restriction of $\mathfrak{N}$ to terms whose value is below $\Omega$ (for ease of notation, we simply write $\delta$ for any term $\Omega^0 \cdot \delta \in \mathfrak{M}$).
	As we are no longer allowed to use $\omega$-terms, we code natural numbers by nested $\vartheta$-terms as follows:
	\begin{equation*}
		\overline{0} := 0 \quad \text{and} \quad \overline{n+1} := \Omega^0 \cdot \vartheta(\overline{n})\period
	\end{equation*}
	Both $f$ and $g$ are defined by simultaneous recursion:
	\begin{align*}
		f(0) &:= \Omega^{\overline{3}}\comma\\
		f(\Omega^{\gamma_0} \cdot \delta_0 + \dots + \Omega^{\gamma_{n-1}} \cdot \delta_{n-1}) &:= \Omega^{f(\gamma_0)} \cdot \vartheta(g(\delta_0)) + \dots + \Omega^{f(\gamma_{n-1})} \cdot \vartheta(g(\delta_{n-1}))\comma
	\end{align*}
	where we always assume $n > 0$ in the second case. A priori, it is not clear that $f$ produces values that satisfy $f(\gamma_0) > \dots > f(\gamma_{n-1})$ in the second line of its definition. This will follow from $f$ being an embedding, which we have not shown, yet. For the time being, we simply omit this restriction but keep in mind that we need to be careful here.
	For the definition of $g$, we consider three cases:
	\begin{itemize}
		\item $g(0) := 0$,
		\item $g(\vartheta(\tau)) := \Omega^{\overline{2}} \cdot \vartheta(f(\tau))$,
		\item $g(\omega^{\gamma} + \delta) := \Omega^{\overline{1}} \cdot \vartheta(g(\gamma)) + \Omega^{\overline{0}} \cdot \vartheta(g(\delta))$ if $\delta < \omega^{\gamma + 1}$ and either $\delta > 0$ or $\gamma$ is not a $\vartheta$-term.
	\end{itemize}
	Now that our definition is complete, a simple induction shows that every element that $f$ maps to lives in $\mathfrak{C}$ (without the restriction on exponents). The definition of $f(0)$ implies $\Omega^{\overline{3}} \leq f(\tau)$ for any $\tau \in \mathfrak{N}$. This entails that any element in the range of $g$ lies below any element in the range of $f$.
	Let us now show the following by simultaneous induction on the sum $l(\sigma) + l(\tau)$ for terms $\sigma$ and $\tau$:
	\begin{enumerate}[label=(\alph*)]
		\item $\vartheta(g(\sigma)) < \vartheta(g(\tau))$ for $\sigma, \tau \in \mathfrak{M}$ with $\sigma < \tau$,
		\item $f(\sigma) < f(\tau)$ for $\sigma, \tau \in \mathfrak{N}$ with $\sigma < \tau$,
		\item $E(\sigma) <_{\fin} \vartheta(\tau) \Rightarrow E(g(\sigma)) <_{\fin} \vartheta(f(\tau))$ for $\sigma \in \mathfrak{M}$ and $\tau \in \mathfrak{N}$,
		\item $E(\sigma) <_{\fin} \vartheta(\tau) \Rightarrow E(f(\sigma)) <_{\fin} \vartheta(f(\tau))$ for $\sigma, \tau \in \mathfrak{N}$,
		\item $\vartheta(\sigma) \leq_{\fin} E(\tau) \Rightarrow \vartheta(f(\sigma)) \leq_{\fin} E(g(\tau))$ for $\sigma \in \mathfrak{N}$ and $\tau \in \mathfrak{M}$,
		\item $\vartheta(\sigma) \leq_{\fin} E(\tau) \Rightarrow \vartheta(f(\sigma)) \leq_{\fin} E(f(\tau))$ for $\sigma, \tau \in \mathfrak{N}$,
		\item $\vartheta(f(\sigma)) < \vartheta(f(\tau))$ for $\sigma, \tau \in \mathfrak{N}$ with $\vartheta(\sigma) < \vartheta(\tau)$.
	\end{enumerate}
	We give a short overview of these claims:
	Our main goal is to prove (a) and (b), i.e.~that $f$ and $\vartheta \circ g$ are embeddings. For this, we also need to show (g), i.e.~that $\vartheta \circ f$ behaves like an embedding if we consider the order induced by $\vartheta$ on~$\mathfrak{N}$. Finally, all other parts of the induction (c), (d), (e), (f) concern the sets $E(\sigma)$ for $\sigma \in \mathfrak{N}$, which we need during the proof of (g). The statements are ordered in a way such that we may use earlier claims without having to rely on the induction hypothesis.

	At the end, the first statement of our lemma will simply follow from (b). The second statement, i.e., that we can embed $\bachmann$ into the suborder of $\mathfrak{C}$ that only consists of $\vartheta$-terms, is given by the map $\sigma \mapsto \vartheta(g(\sigma))$, i.e.~(a).
	\begin{enumerate}[leftmargin=*, label=(\alph*)]
		\item If $\sigma = 0$, the claim is clear. If $\sigma = \vartheta(\sigma')$ and $\tau = \vartheta(\tau')$, then (g) yields $\vartheta(f(\sigma')) < \vartheta(f(\tau'))$. This entails $\vartheta(\Omega^{\overline{2}} \cdot \vartheta(f(\sigma'))) < \vartheta(\Omega^{\overline{2}} \cdot \vartheta(f(\tau')))$.
		
		If $\sigma = \vartheta(\sigma')$ and $\tau = \omega^{\gamma} + \delta$, then $\sigma \leq \gamma$ must hold. From (a), we get $\vartheta(g(\sigma)) \leq \vartheta(g(\gamma))$ and, thus, $\vartheta(g(\sigma)) < \vartheta(g(\omega^{\gamma} + \delta))$.
		In the reverse case, if $\sigma = \omega^{\gamma} + \delta$ and $\tau = \vartheta(\tau')$, then $\gamma < \tau$ and $\delta < \tau$ hold. From (a), we have $\vartheta(g(\gamma)) < \vartheta(g(\tau))$ and $\vartheta(g(\delta)) < \vartheta(g(\tau))$. In order to derive $\vartheta(g(\omega^{\gamma} + \delta)) < \vartheta(g(\tau))$, we want to show $g(\omega^{\gamma} + \delta) < g(\tau)$, which is clear by definition of $g$, and $E(g(\omega^{\gamma} + \delta)) <_{\fin} \vartheta(g(\tau))$. In order to prove the latter inequality, we consider the elements of $E(g(\omega^{\gamma} + \delta))$: $\overline{0}$, $\overline{1}$, $\vartheta(g(\gamma))$, and $\vartheta(g(\delta))$. For the former two, this is straightforward. For the latter two, we have already shown the inequalities above using (a).
		
		Finally, consider $\sigma = \omega^{\gamma} + \delta$ and $\tau = \omega^{\gamma'} + \delta'$. Clearly, we have $\gamma < \tau$ and $\delta < \tau$, which entails $\vartheta(g(\gamma)) < \vartheta(g(\tau))$ and $\vartheta(g(\delta)) < \vartheta(g(\tau))$ using~(a). This establishes $E(g(\sigma)) <_{\fin} \vartheta(g(\tau))$. The only thing left for showing the inequality $\vartheta(g(\sigma)) < \vartheta(g(\tau))$ is $g(\sigma) < g(\tau)$. For this, there are two cases: If we have $\gamma < \gamma'$, then by a), $\vartheta(g(\gamma)) < \vartheta(g(\gamma'))$ holds. Otherwise, if we have $\gamma = \gamma'$ and $\delta < \delta'$, then by (a), $\vartheta(g(\delta)) < \vartheta(g(\delta'))$ holds. In both cases, we arrive at our claim $g(\sigma) < g(\tau)$.
		\item By induction hypothesis (b), we know that the $\Omega$-exponents in $f(\sigma)$ and $f(\tau)$ are strictly descending. They are, therefore, proper $\Omega$-normal forms (with collapsed coefficients).
		
		If $\tau$ is a proper extension of $\sigma$, then the same holds for $f(\tau)$ and $f(\sigma)$. Otherwise, let $i$ be the smallest index such that the $i$-th summand $\Omega^{\gamma} \cdot \delta$ of $\sigma$ is strictly less than the $i$-th summand $\Omega^{\gamma'} \cdot {\delta'}$ of $\tau$. If we have $\gamma < \gamma'$, then by induction hypothesis (b), this yields $f(\gamma) < f(\gamma')$ and, therefore, the inequality $\Omega^{f(\gamma)} \cdot \vartheta(g(\delta)) < \Omega^{f(\gamma')} \cdot \vartheta(g(\delta'))$. If we have $\gamma = \gamma'$ and $\delta < \delta'$, then by (a), this yields $\vartheta(g(\delta)) < \vartheta(g(\delta'))$ and thus $\Omega^{f(\gamma)} \cdot \vartheta(g(\delta)) < \Omega^{f(\gamma')} \cdot \vartheta(g(\delta'))$. Note that the lengths of $\delta$ and $\delta'$ may be identical to those of $\sigma$ and $\tau$, respectively, since these are only shorthand for $\Omega^0 \cdot \delta$ and $\Omega^0 \cdot \delta'$. This is no problem, however, since we are not invoking the induction hypothesis but simply the the fact that we have already shown (a) for this sum of lengths. For reasons like this, it is important that our claims have their specific order.
		\item If $\sigma = 0$, this is clear. If $\sigma = \vartheta(\sigma')$, we need to show that both $\overline{2}$ and $\vartheta(f(\sigma'))$ are strictly below $\vartheta(f(\tau))$. The former is clear. The latter follows from $\sigma \in E(\sigma)$ and (g). If $\sigma = \omega^{\gamma} + \delta$, then we have to check that all of $\overline{0}$, $\overline{1}$, $\vartheta(g(\gamma))$, and $\vartheta(g(\delta))$ are strictly below $\vartheta(f(\tau))$. Similar to before, this is clear for the former two. For the latter two, we recall $g(\gamma), g(\delta) < f(\tau)$ and arrive at the claim using (d) together with the fact $E(\gamma) \cup E(\delta) \subseteq E(\sigma)$.
		\item Any element of $E(f(\sigma))$ is either equal to $\overline{3}$ or $\vartheta(g(\sigma'))$ for a $\sigma' \in \mathfrak{M}$ with $E(\sigma') \subseteq E(\sigma)$. In the former case, it is easy to see that $\overline{3} <_{\fin} \vartheta(f(\tau))$ holds. In the latter case, we apply (c), which yields $E(g(\sigma')) <_{\fin} \vartheta(f(\tau))$ and, thus, $\vartheta(g(\sigma')) < \vartheta(f(\tau))$.
		\item Clearly, $\tau$ cannot be equal to zero. If $\tau = \vartheta(\tau')$, then we have $\vartheta(\sigma) \leq \vartheta(\tau')$, which implies $\vartheta(f(\sigma)) \leq \vartheta(f(\tau))$ using (g). From here, the claim follows immediately. If $\tau = \omega^{\gamma} \cdot \delta$, then we have $\vartheta(\sigma) \leq_{\fin} E(\gamma)$ or $\vartheta(\sigma) \leq_{\fin} E(\delta)$. In both cases, (e) yields $\vartheta(f(\sigma)) \leq_{\fin} E(g(\gamma))$ or $\vartheta(f(\sigma)) \leq_{\fin} E(g(\delta))$. Either implies our claim.
		\item As before, $\tau$ cannot be zero. Let $\tau = \Omega^{\gamma_0} \cdot \delta_0 + \dots + \Omega^{\gamma_{n-1}} \cdot \delta_{n-1}$. If we have $\vartheta(\sigma) \leq_{\fin} E(\tau)$, then there must be some index $i < n$ satisfying one of the inequalities $\vartheta(\sigma) \leq_{\fin} E(\gamma_i)$ or $\vartheta(\sigma) \leq_{\fin} E(\delta_i)$. In the former case, we apply~(f), which yields $\vartheta(f(\sigma)) \leq_{\fin} E(f(\gamma_i)) \subseteq E(f(\tau))$. In the latter case, we apply~(e), which entails $\vartheta(f(\sigma)) \leq_{\fin} E(g(\delta_i))$ and, thus, the inequality ${\vartheta(f(\sigma)) < \vartheta(g(\delta_i)) \leq_{\fin} E(f(\tau))}$.
		\item If $\sigma < \tau$ and $E(\sigma) <_{\fin} \vartheta(\tau)$, then by (b), we have $f(\sigma) < f(\tau)$ and from (d), we get $E(f(\sigma)) <_{\fin} \vartheta(f(\tau))$. This entails $\vartheta(f(\sigma)) < \vartheta(f(\tau))$. Otherwise, if $\vartheta(\sigma) \leq_{\fin} E(\tau)$, then by (f), we get $\vartheta(f(\sigma)) \leq_{\fin} E(f(\tau))$ and, thus, the same claim $\vartheta(f(\sigma)) < \vartheta(f(\tau))$.\qedhere
	\end{enumerate}
\end{proof}

\begin{lemma}[$\rca_0$]\label{lem:subset_of_psi_G_is_fp_of_omega_G}
	Let $G$ be the Goodstein dilator. There is a $1$-fixed point of $\omega \circ G$ that can be defined on a suborder of $\psi_1(G)$.
\end{lemma}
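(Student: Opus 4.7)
The plan is to exploit the fact that the Goodstein predilator $G$ is itself built out of iterated $\omega$-exponentiations: since $G(Z) = (1+Z)_\omega$, every element of $G(Z)$ can be rewritten as a finite weakly descending sum of $\omega$-powers $\omega^{\tau_0} + \dots + \omega^{\tau_{k-1}}$ with $\tau_i \in G(Z)$, by expanding each summand $(1+Z)^\gamma \cdot (1+\delta)$ of the Cantor normal form to base $1+Z$ into Cantor normal form to base $\omega$. I would first verify that this yields a natural, support-preserving order embedding $e_Z \colon G(Z) \hookrightarrow \omega^{G(Z)}$.

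Next, I would define a suborder $Y \subseteq \psi_1(G)$ together with a map $\kappa \colon Y \to \omega^{G(Y)}$ essentially as the restriction of $e_{\psi_1(G)} \circ \pi$, where $\pi \colon \psi_1(G) \to G(\psi_1(G))$ is the canonical $1$-collapse of $G$. That $\kappa$ is an embedding is immediate from the embedding properties of $\pi$ and $e$; the height-function condition follows because $e$ preserves supports, so the relation $\triangleleft$ on $Y$ induced by $\kappa$ coincides with the one induced by $\pi$, and the latter admits a height function by the hypothesis on $\psi_1(G)$.

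The range condition is where the main difficulty lies. One inclusion is routine: if $\tau = \kappa(x)$, then $\supp^{\omega \circ G}_Y(\tau)$ equals $\supp^G_Y(\pi(x)) <_{\fin} x$ by the range condition for $\pi$, which together with $\kappa$ being order-preserving forces $G_0(\tau) <_{\fin} \tau$. For the reverse inclusion, the obstruction is that not every $\tau \in \omega^{G(Y)}$ lies in the image of $e_Y$, since elements of $\omega^{G(Y)}$ admit more general shapes (arbitrary weakly descending sums of $\omega$-powers $\omega^{\tau}$ with $\tau \in G(Y)$) than elements of $G(Y)$ (sums of $(1+Y)^\gamma \cdot (1+\delta)$-terms). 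I would therefore define $Y$ precisely as the set of $x \in \psi_1(G)$ whose image $\pi(x)$ has the \emph{reduced} shape that makes $e$ invertible at $\pi(x)$, and then show that $\{e(\pi(x)) \mid x \in Y\}$ coincides with $\{\tau \in \omega^{G(Y)} \mid G_0(\tau) <_{\fin} \tau\}$. This combinatorial match between the two normal forms, together with its compatibility with the range condition inherited from $\pi$, is the step I expect to require the bulk of the technical work; once established, the fact that $(Y, \kappa)$ is a $1$-fixed point of $\omega \circ G$ follows directly.
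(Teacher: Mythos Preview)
Your overall architecture—carve out a suborder $Y\subseteq\psi_1(G)$, define a collapse $\kappa:Y\to(\omega\circ G)(Y)$ via $\pi$, and verify the range condition—matches the paper's. But the engine you propose for $\kappa$, the ``natural embedding'' $e_Z:G(Z)\hookrightarrow\omega^{G(Z)}$ obtained by ``expanding each summand $(1+Z)^\gamma\cdot(1+\delta)$ into Cantor normal form to base $\omega$'', is not a well-defined operation on terms. In the term system $G(Z)$, the coefficient $\delta$ is an \emph{arbitrary} element of $Z$; it carries no internal Cantor-normal-form data, so there is no syntactic way to rewrite $(1+Z)^\gamma\cdot(1+\delta)$ as a finite sum $\omega^{\tau_0}+\dots+\omega^{\tau_{k-1}}$ with $\tau_i\in G(Z)$. (At the level of ordinals such a rewriting exists, but it is not functorial in $Z$ and certainly not support-preserving.) Consequently your later remedy—restricting $Y$ to those $x$ for which ``$e$ is invertible at $\pi(x)$''—has no content, because $e$ itself was never defined.

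The paper resolves this not by expanding coefficients but by \emph{constraining} them. It first fixes explicit natural-number surrogates $\overline{m}\in\psi_1(G)$ (with $\overline{0}=\pi^{-1}(0)$ and $\overline{m+1}=\pi^{-1}\big((1+\psi_1(G))^0\cdot(1+\overline{m})\big)$), and then takes the suborder $S^+\subseteq\psi_1^+(G)$ of elements whose $\pi^+$-image has \emph{all} coefficients of the form $\overline{m_i}$ and whose exponents' supports already lie in $S^+$. On this suborder the desired map is simply
\[
\kappa^+(s)\ =\ \omega^{s'_0}\cdot(1+m_0)+\dots+\omega^{s'_{n-1}}\cdot(1+m_{n-1}),
\]
i.e.\ repeat the $i$-th exponent $(1+m_i)$ many times; this is an \emph{isomorphism} $S^+\to(\omega\circ G)(S^+)$, so the range condition on $S:=S^+\cap\psi_1(G)$ reduces cleanly to the range condition already known for $\pi$. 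The point you are missing is precisely this choice of canonical finite coefficients: without it there is no finite-repetition description of the map into $\omega^{G(\cdot)}$, and hence no way to make $\kappa^+$ surjective.
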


\begin{proof}
	Let $\pi: \psi_1(G) \to G(\psi_1(G))$ be the collapse of the canonical $1$-fixed point~$\psi_1(G)$ (and let $\pi^+: \psi_1^+(G) \to G(\psi_1^+(G))$ be the corresponding isomorphism).
	Before we begin, we need substitutes for the natural numbers in $\psi_1(G)$ (similar to those used in the proof of Lemma \ref{lem:embedding_omega_normal_collapsed}). We define
	\begin{equation*}
		\overline{0} := \pi^{-1}(0) \quad \text{and} \quad \overline{n + 1} := \pi^{-1}((1 + \psi_1(G))^0 \cdot (1 + \overline{n}))
	\end{equation*}
	for any $n \in \n$.
	Using induction, we can simultaneously see that $\overline{n} < \overline{n + 1}$ holds for all $n \in \n$ and that all our applications of $\pi^{-1}$ are valid.
	
	Consider the following suborder $S^+ \subseteq \psi_1^+(G)$:
	The order $S^+$ contains elements $s$ of the form
	\begin{equation*}
		\pi^+(s) := (1 + \psi_1^+(G))^{s_0} \cdot (1 + \overline{m_0}) + \dots + (1 + \psi_1^+(G))^{s_{n-1}} \cdot (1 + \overline{m_{n-1}})
	\end{equation*}
	with $s_i \in G(\psi_1^+(G))$ and $m_i \in \n$ for all $i < n$ such that the additional condition $\supp^G_{\psi_1^+(G)}(\pi^+(s)) \subseteq S^+$ holds.
	
	Using induction along the height of terms, we can see that it is decidable whether some element lives in $S^+$ or not. We will define our $1$-fixed point on $S := S^+ \cap \psi_1(G)$. For the collapse, we use the following function $\kappa^+: S^+ \to (\omega \circ G)(S^+)$:
	\begin{equation*}
		\kappa^+(s) := \omega^{s'_0} \cdot (1 + m_0) + \dots + \omega^{s'_{n-1}} \cdot (1 + m_{n-1})\comma\footnotemark{}
	\end{equation*}
	where $s$ is just our exemplary element from the definition of $S^+$ and $s'_i \in G(S^+)$ are uniquely given by $G(\iota_{S^+}^{\psi_1^+(G)})(s'_i) = s_i$ for all $i < n$. Note that the latter is possible because of the restriction the we imposed on the support of $s$ during the definition of $S^+$.\footnotetext{For ease of notation, we write this term for the sequence in $\omega^{G(S^+)}$ given by $(1 + m_0)$-many repetitions of $s'_0$, followed by $(1 + m_{1})$-many repetitions of $s'_1$, \dots, until it finishes with $(1 + m_{n-1})$-many repetitions of $s'_{n-1}$.}
	
	It can be seen rather quickly that $\kappa^+$ is an isomorphism. Now, we define the map $\kappa: S \to (\omega \circ G)(S)$ to be the unique embedding such that the equality ${\kappa^+ \circ \iota_S^{S^+} = (\omega \circ G)(\iota_S^{S^+}) \circ \kappa}$ holds. The existence of $\kappa$ is guaranteed since our support satisfies $\supp_{S^+}^{\omega \circ G}(\kappa^+(s)) \subseteq \psi_1(G)$ and, therefore, $\supp_{S^+}^{\omega \circ G}(\kappa^+(s)) \subseteq S$ holds for any $s \in S$.

	In order to prove our claim, we only have to show that $\kappa$ makes $S$ a $1$-fixed point. The first requirement is that we can construct a height function for $S$ that respects the relation $\triangleleft_S$ that satisfies $s \triangleleft_S t$ if and only if $s \in \supp_S^{\omega \circ G}(\kappa(t))$ holds, for arbitrary $s, t \in S$. For our height function, we simply restrict $h: \psi_1(G) \to \n$ of $\psi_1(G)$ to $S$. Now, let $s, t \in S$ be arbitrary with $s \triangleleft_S t$. Then, $s$ is an element of $\supp_S^{\omega \circ G}(\kappa(t)) = \supp_{S^+}^{\omega \circ G}(\kappa^+(t))$. Assume that $\pi^+(t)$ is of the form
	\begin{equation*}
		\pi^+(t) := (1 + \psi_1^+(G))^{t_0} \cdot (1 + \overline{k_0}) + \dots + (1 + \psi_1^+(G))^{t_{l-1}} \cdot (1 + \overline{k_{l-1}})
	\end{equation*}
	and let $t'_i$ be such that $G(\iota_{S^+}^{\psi_1^+(G)})(t'_i) = t_i$ holds for all $i < l$. Thus, we have
	\begin{equation*}
		\kappa^+(t) := \omega^{t'_0} \cdot (1 + k_0) + \dots + \omega^{t'_{l-1}} \cdot (1 + k_{l-1})\period
	\end{equation*}
	From $s \in \supp_{S^+}^{\omega \circ G}(\kappa^+(t))$, we conclude $s \in \supp_{S^+}^{G}(t'_i)$ for some $i < l$. Thus, $s \in \supp_{\psi_1^+(G)}^{G}(\pi^+(t))$ holds. Since we have $t \in S \subseteq \psi_1(G)$, this entails that $s$ is an element of $\supp_{\psi_1(G)}^{G}(\pi(t))$, i.e., $s \triangleleft t$ and, finally, $h(s) < h(t)$.
	
	Next, we have to verify the condition on the range of $\kappa$. For this, we consider some arbitrary $\sigma \in (\omega \circ G)(S)$ and prove that there exists some $s \in S$ with $\kappa(s) = \sigma$ if and only if $G^S_0(\sigma) < \sigma$ holds, for $G^S_0(\sigma) := \{\kappa(s) \mid s \in \supp_S^{\omega \circ G}(\sigma)\}$.
	
	For the first direction, we take some arbitrary $s \in S$ and prove $G^S_0(\kappa(s)) < \kappa(s)$. Let $t \in S$ be such that $\kappa(t) \in G^S_0(\kappa(s))$ holds. We want to derive $\kappa(t) < \kappa(s)$. By definition, we have $t \in \supp_S^{\omega \circ G}(\kappa(s))$. Using the same argument as above, this entails $t \triangleleft s$ and, hence, $t < s$. We conclude $\kappa(t) < \kappa(s)$.
	
	Finally, for the other direction, we assume $G^S_0(\sigma) < \sigma$ and try to find some $s \in S$ with $\kappa(s) = \sigma$. Now, since $\kappa^+$ is an isomorphism, it is straightforward to find some $s \in S^+$ with $\kappa^+(s) = (\omega \circ G)(\iota_S^{S^+})(\sigma)$. Also, if we can show $s \in \psi_1(G)$, which entails $s \in S$, then the definition of $\kappa$ yields our claim $\kappa(s) = \sigma$.
	
	Since the support of $\sigma$ is a subset of $S$, the same must holds for $s$ and also for $\pi^+(s)$ with respect to $\psi_1(G)$. For the latter, we use the fact that $\overline{m_i}$ is an element of $\psi_1(G)$ for each $i < n$. Thus, we can find a unique $\tau \in G(\psi_1(G))$ with $G(\iota_{\psi_1(G)}^{\psi_1^+(G)})(\tau) = \pi^+(s)$. In the next paragraph, we prove $G_0(\tau) <_{\fin} \tau$. If this holds, we can find a unique $t \in \psi_1(G)$ with $\pi(t) = \tau$. With this, we will conclude $s = t \in \psi_1(G)$ using the fact $\pi^+ \circ \iota_{\psi_1(G)}^{\psi_1^+(G)} = G(\iota_{\psi_1(G)}^{\psi_1^+(G)}) \circ \pi$.
	
	We prove $G_0(\tau) <_{\fin} \tau$. Let $x \in \psi_1(G)$ be arbitrary with $\pi(x) \in G_0(\tau)$. Our claim is that $\pi(x) < \tau$ holds, but we will show $x < s$ first. By assumption, we have $x \in \supp_{\psi(G)}^{G}(\tau) = \supp_{\psi_1^+(G)}^G(\pi^+(s))$. Therefore, we have either $x = \overline{m_i}$ or $x \in \supp_{\psi_1^+(G)}^G(s_i)$ for some index $i < n$. In the first case
	\begin{equation*}
		\pi^+(\overline{m_i}) < \pi^+(\overline{m_i + 1}) = (1 + \psi_1^+(G))^0 \cdot (1 + \overline{m_i}) \leq \pi^+(s)
	\end{equation*}
	implies $x = \overline{m_i} < s$. In the second case, we have
	\begin{equation*}
		x \in \supp_{\psi_1(G)}^G(s_i) \subseteq \supp_{S^+}^{\omega \circ G}(\kappa^+(s)) = \supp_S^{\omega \circ G}(\sigma)\period
	\end{equation*}
	Thus, $\kappa(x) < \sigma$ holds by our assumption $G^S_0(\sigma) < \sigma$. By definition of $\kappa$ and $s$, this entails $\kappa^+(x) < \kappa^+(s)$ and, hence, $x < s$.
	
	Now that we have seen that both cases yield $x < s$, we can derive our actual claim. The inequality implies $\pi^+(x) < \pi^+(s)$. Now, by definition of $\tau$ and the fact $\pi^+ \circ \iota_{\psi_1(G)}^{\psi_1^+(G)} = G(\iota_{\psi_1(G)}^{\psi_1^+(G)}) \circ \pi$, we arrive at $\pi(x) < \tau$.
\end{proof}
For the proof of Proposition \ref{prop:goodstein_bachmann_second_order}, we will employ results about Bachmann-Howard fixed points. We have already given a set-theoretic definition in Section \ref{sec:set_theory}. Now, we need a variant that is compatible with second order arithmetic:
\begin{definition}
	Given a predilator $D$, we call a linear order $X$ a \emph{Bachmann-Howard fixed point} if there exists a map
	\begin{equation*}
		\vartheta: D(X) \to X
	\end{equation*}
	satisfying the following for any $\sigma, \tau \in D(X)$:
	\begin{enumerate}[label=(\roman*)]
		\item If $\sigma < \tau$ and $\supp^D_X(\sigma) <_{\fin} \vartheta(\tau)$, then we have $\vartheta(\sigma) < \vartheta(\tau)$.
		\item We have $\supp^D_X(\sigma) <_{\fin} \vartheta(\sigma)$.
	\end{enumerate}
	We call $\vartheta$ a \emph{Bachmann-Howard collapse}.
\end{definition}

\begin{proof}[Proof of Proposition \ref{prop:goodstein_bachmann_second_order}]
	Let $C$ denote the suborder of $\vartheta$-terms in $\mathfrak{C}$. With the help of Lemma~\ref{lem:embedding_omega_normal_collapsed}, we can embed $\bachmann$ into $C$. We begin by constructing a Bachmann-Howard collapse and define $\kappa : G(C) \to \mathfrak{C}$:
	\begin{multline*}
		\kappa((1+C)^{\gamma_0} \cdot (1 + c_0) + \dots + (1+C)^{\gamma_{n-1}} \cdot (1 + c_{n-1})) :=\\ \Omega^{\kappa(\gamma_0)} \cdot c_0 + \dots + \Omega^{\kappa(\gamma_{n-1})} \cdot c_{n-1}\comma
	\end{multline*}
	where $\gamma_i \in G(C)$ and $c_i \in C$ holds for all $i < n$ with $\gamma_0 > \dots > \gamma_{n-1}$.
	Let us show that the map $\gamma \mapsto \vartheta(\kappa(\gamma))$ is a collapse. For this, consider two elements $\sigma, \tau \in C$ with $\sigma < \tau$ and $\supp_{C}(\sigma) <_{\fin} \vartheta(\kappa(\tau))$. First, we can easily see that $\kappa(\sigma) < \kappa(\tau)$ holds using induction on $l(\sigma) + l(\tau)$. In order to derive $\vartheta(\kappa(\sigma)) < \vartheta(\kappa(\tau))$, we need to show $E(\kappa(\sigma)) <_{\fin} \vartheta(\kappa(\tau))$. This easily follows from $E(\kappa(\sigma)) = \supp_{C}(\sigma)$.
	
	For the second condition of Bachmann-Howard collapses, we have to prove $\supp_{C}(\sigma) <_{\fin} \vartheta(\kappa(\sigma))$ for $\sigma \in G(C)$. Using $E(\kappa(\sigma)) = \supp_{C}(\sigma)$ again, this becomes apparent immediately.
	
	Now, we show that our Bachmann-Howard fixed point is initial. For this, we use Theorem 2.9 of \cite{Freund22}: The requirements are that our collapse $\gamma \mapsto \vartheta(\kappa(\gamma))$ is surjective and that there exists a function $l : C \to \n$ with $l(c) < l(\vartheta(\kappa(\sigma)))$ for any $\sigma \in G(C)$ and $c \in \supp_{C}(\sigma)$. The former can be seen using a short induction and for the latter, we can simply take the length function $l$ associated with $\bachmann$.
	
	Next, Theorem 4.2 of \cite{FR21} says that any $1$-fixed point of $\omega \circ G$ is a Bachmann-Howard fixed point of $G$. Since $C$ is initial, it embeds into any $1$-fixed point of $\omega \circ G$ and, using Lemma \ref{lem:subset_of_psi_G_is_fp_of_omega_G}, into (a suborder of) $\psi_1(G)$.
	
	For the converse direction, we use our Bachmann-Howard collapse $\kappa: G(C) \to C$ from before. According to Theorem 4.1 of \cite{FR21}, $C$ must have a $1$-fixed point of $G$ as suborder. Because of uniqueness, any $1$-fixed point of $G$ can be embedded into~$C$. Also, as a suborder of $\mathfrak{C}$, the order $C$ clearly embeds into $\bachmann$. We conclude that any $1$-fixed point of $G$ can be embedded into~$\bachmann$.
\end{proof}

\section{The weak inverse Goodstein sequence}\label{sec:weak_inverse_Goodstein}
There is a weak variant of Goodstein sequences that has been studied (cf.~\cite{Cichon83}). In contrast to the original construction, the base change in the weak variant is only a regular one, i.e., we apply a non-hereditary base change. E.g., if our initial value is $5$, then the second member of the ensuing weak Goodstein sequence computes as follows:
\begin{equation*}
	5 \quad \rightsquigarrow \quad 2^{2} + 2^0 \quad \rightsquigarrow \quad 3^{2} + 3^0 \quad \rightsquigarrow \quad 10 \quad \rightsquigarrow \quad 9\period
\end{equation*}
That this procedure terminates, i.e.~eventually reaches zero for any initial value, cannot be proved in $\rca_0$. However, already adding the assumption that $\omega^\omega$ is well founded suffices for a proof.

Similar to the original observation from the introduction, the regular base change can also be realized by a (pre)dilator:
\begin{definition}
	We define an endofunctor $W$ on the category of linear orders, as follows:
	Given a linear order $X$, the set $W(X)$ contains terms
	\begin{equation*}
		x := (1 + X)^{m_0} \cdot (1 + \delta_0) + \dots + (1 + X)^{m_{n-1}} \cdot (1 + \delta_{n-1})
	\end{equation*}
	for $m_i \in \n$ and $\delta_i \in X$ with $i < n$. Moreover, demand $m_0 > \dots > m_{n-1}$. Consider another term
	\begin{equation*}
		y := (1 + X)^{m'_0} \cdot (1 + \delta'_0) + \dots + (1 + X)^{m'_{m-1}} \cdot (1 + \delta'_{m-1})
	\end{equation*}
	in $W(X)$. We have $x < y$ if and only if one of the following holds:
	\begin{enumerate}[label=(\roman*)]
		\item The term $y$ is a proper extension of $x$, i.e., $m > n$ holds with $m_i = m'_i$ and $\delta_i = \delta'_i$ for all $i < n$.
		\item There is some $i < \min(n, m)$ with $m_i < m'_i$ or both $m_i = m'_i$ and $\delta_i < \delta'_i$. Moreover, we have $m_j = m'_j$ and $\delta_j = \delta'_j$ for all $j < i$.
	\end{enumerate}
	Given a morphism $f: X \to Y$ between two linear orders $X$ and $Y$, we map our element $x$ from above to
	\begin{equation*}
		f(x) := (1 + Y)^{m_0} \cdot (1 + f(\delta_0)) + \dots + (1 + Y)^{m_{n-1}} \cdot (1 + f(\delta_{n-1}))\period
	\end{equation*}
	Finally, we define $\supp_X(x) := \{\delta_i \mid i < n\}$. We call $W$ the \emph{weak Goodstein predilator} and the increasing $W$-sequence is the \emph{weak inverse Goodstein sequence}.
\end{definition}
With this, we can express the construction of weak Goodstein sequences as follows: Given an initial number $m \in \n$, we define our sequence $(w_n)_{n \in \n}$ by
\begin{itemize}
	\item $w_0 := m$,
	\item $w_{n+1}:$ If $w_n > 0$, then we set $w_{n+1} := W(\iota_{n+1}^{n+2})(w_n) - 1$. Otherwise, if $w_n = 0$, we set $w_{n+1} := 0$.
\end{itemize}
Of course, we are now interested in the termination point of the increasing $W$-sequence. We can already expect it to be smaller than $\bachmann$. In fact, it will turn out to be much smaller.
The following definition provides a notation system for $\varphi_\omega(0)$ directly inspired by~\cite{RW11}.
\begin{definition}
	We define the term system $\phiomega$ simultaneously with a head function $h: \phiomega \to \n$, a set of indecomposable terms $H \subseteq \phiomega$, and an order:
	\begin{itemize}
		\item $0$ with $h(0) := 0$ and $0 \notin H$.
		\item $x := x_0 + \dots + x_{n-1}$ for $n > 1$, $x_i \in H$ for all $i < n$ with $x_0 \geq \dots \geq x_{n-1}$. We have $h(x) := 0$ and $x \notin H$.
		\item $t := \varphi(n, x)$ for $n \in \n$, $x \in \phiomega$ and $h(x) \leq n$. We have both $h(t) := n$ and $t \in H$.
	\end{itemize}
	The linear order on $\phiomega$ is given as follows:
	\begin{itemize}
		\item $0 < x$ for all $x \in \phiomega$ with $x \neq 0$,
		\item $x_0 + \dots + x_{n-1} < y_0 + \dots + y_{m-1}$ if and only if either $n < m$ and $x_i = y_i$ for all $i < n$, or there is some $i < n$ with $x_i < y_i$ and $x_j = y_j$ holds for all~$j < i$,
		\item $x_0 + \dots + x_{n-1} < \varphi(m, y)$ if and only if $x_0 < \varphi(m, y)$,
		\item $\varphi(n, x) < y_0 + \dots + y_{m-1}$ if and only if $\varphi(n, x) \leq y_0$,
		\item $\varphi(n, x) < \varphi(m, y)$ if and only if one of the three holds:
		\begin{itemize}
			\item $n < m$ and $x < \varphi(m, y)$,
			\item $n = m$ and $x < y$, or
			\item $n > m$ and $\varphi(n, x) < y$.
		\end{itemize}
	\end{itemize}
\end{definition}

The following proposition is very similar to Propositions~\ref{prop:goodstein_bachmann} and \ref{prop:goodstein_bachmann_second_order}: We calculate the termination point of $W$ using equimorphisms:
\begin{proposition}[$\rca_0$]\label{prop:weak_goodstein_phiomega}
	Any strong $W$-sequence termination point is equimorphic to $\varphi(\omega, 0)$. In particular, there is a well founded $W$-sequence termination point if and only if $\varphi(\omega, 0)$ is well founded.
\end{proposition}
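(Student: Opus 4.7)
The plan is to mirror the strategy of Proposition~\ref{prop:goodstein_bachmann_second_order}, in a form simplified by the predicativity of $\phiomega$. By Theorem~\ref{thm:d_sequence_fp_is_1_fp}, any strong $W$-sequence termination point is isomorphic to the canonical $1$-fixed point $\psi_1(W)$, so it suffices to construct mutual embeddings between $\phiomega$ and $\psi_1(W)$. Since $\phiomega$ is predicative, no Bachmann-Howard detour is needed; instead, the goal is the more direct one of exhibiting $\phiomega$ itself as a $1$-fixed point of $W$, whence Corollary~\ref{cor:strong_fp_unique} supplies a $W$-sequence isomorphism (which is in particular an equimorphism in $\rca_0$).

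The forward embedding will be produced by defining $\pi \colon \phiomega \to W(\phiomega)$ by recursion on term structure. On a single indecomposable $\varphi(n, y)$, $\pi$ should send it to a $W$-term whose leading exponent records $n$; on a sum of indecomposables, $\pi$ must aggregate contiguous blocks of equal Veblen level into single $W$-summands, because $W$ demands strictly descending natural-number exponents while $\phiomega$-normal forms admit repeated levels. The contents of each block can be packed into a single coefficient by taking the $\phiomega$-sum of the $y_i$'s with canonical markers (e.g.\ copies of $\varphi(0,0)$) keeping the encoding injective. With $\pi$ defined, the three conditions for a $1$-fixed point must be checked: that $\pi$ is an embedding, that the head function $h$ of $\phiomega$ witnesses well-foundedness of the induced support relation, and that the range condition $\rng(\pi) = \{\tau \in W(\phiomega) \mid G_0(\tau) <_{\fin} \tau\}$ is satisfied. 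These verifications should proceed by simultaneous induction on $W$-term height, in the style of Lemma~\ref{lem:embedding_omega_normal_collapsed}.

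The reverse embedding $\psi_1(W) \hookrightarrow \phiomega$ then follows from Corollary~\ref{cor:strong_fp_unique}; alternatively one may define it explicitly by recursion, sending a term $(1+\psi_1(W))^{m_0}(1+t_0) + \dots + (1+\psi_1(W))^{m_{k-1}}(1+t_{k-1})$ in the range of the collapse of $\psi_1(W)$ to the $\phiomega$-normal form obtained by unpacking each $W$-summand into a block of $\varphi(m_i, \Phi(t_i))$-terms. The final well-foundedness equivalence then follows because equimorphism of linear orders preserves well-foundedness. The main obstacle throughout is the structural mismatch between the strict descent of exponents in $W$ and the repeated Veblen levels permitted in $\phiomega$: finding an encoding of same-level blocks into a single $W$-coefficient $(1 + z)$ that preserves order, remains injective, stays strictly below the leading summand of its block, and makes the range condition come out exactly is the technical crux. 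Once a suitable encoding is pinned down, the remaining verifications should reduce to routine syntactic inductions along the lines already used in Section~\ref{sec:inverse_Goodstein}.
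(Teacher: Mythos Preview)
Your plan differs from the paper's and would, if it worked, yield a stronger conclusion: exhibiting $\phiomega$ itself as a $1$-fixed point of $W$ gives, via Corollary~\ref{cor:strong_fp_unique}, an \emph{isomorphism} with any strong $W$-sequence termination point, whereas the paper proves only equimorphism. The paper routes the forward direction through an auxiliary predilator $D(X) = \omega^X + \omega \times X$, its initial Bachmann-Howard fixed point, and an intermediate system $\psiOmegaomega$ (Lemmas~\ref{lem:phiomega_embeds_into_bhfp_of_D}, \ref{lem:embed_phiomega_psiOmegaomega}, \ref{lem:embed_psiOmegaomega_1_fp_weak}), and constructs the reverse embedding by an explicit Rathjen--Weiermann-style recursion (Lemma~\ref{lem:embed_1_fp_weak_phiomega}); this last step is close in spirit to the explicit backward map you sketch as an alternative.

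The genuine gap is exactly where you locate it, but it is more serious than you indicate. The range condition is a \emph{biconditional}: every $\tau \in W(\phiomega)$ with $G_0(\tau) <_{\fin} \tau$ must lie in $\rng(\pi)$. For a single-summand $\tau = (1+\phiomega)^n \cdot (1+\delta)$ this holds whenever $\pi(\delta) < \tau$, which under your scheme occurs for \emph{every} $\delta$ whose leading Veblen index is below $n$. Your block-to-coefficient encoding must therefore be a \emph{bijection} onto this set of coefficients, not merely an order-preserving injection. A sum-with-markers scheme cannot do this: if a singleton block $(y)$ encodes to $y$ itself, then any $\delta$ that happens to have the shape $y + \varphi(0,0) + z$ acquires two preimages (the singleton $(\delta)$ and the pair $(y,z)$); if instead every block receives at least one marker, then indecomposable $\delta$'s have no preimage at all. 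Getting a bijection here is essentially as hard as computing the $W$-sequence on $\phiomega$ by hand, which is what the whole proposition is about. The paper avoids the issue entirely by abandoning the exact range condition and settling for two one-way embeddings.
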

Again, we can ask ordinal analysis which systems may or may not prove the termination of weak inverse Goodstein sequences. E.g., let us consider the systems of $\deltaonecr$ and $\deltaoneca$, i.e., extensions of $\rca_0$ with full induction together with a comprehension rule and an comprehension axiom, respectively, for $\Delta^1_1$-formulas:
\begin{corollary}[$\rca_0$]
	The system of $\deltaonecr$ cannot prove that the weak inverse Goodstein sequence terminates, i.e., that there is a well founded $W$-sequence termination point. However, already the system of $\deltaoneca$ can prove this fact.
\end{corollary}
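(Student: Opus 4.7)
The plan is to apply Proposition~\ref{prop:weak_goodstein_phiomega} to reduce the whole question to the well-foundedness of $\phiomega$: since that proposition is proved in $\rca_0$, both directions of the equimorphism are available in $\deltaonecr$ and $\deltaoneca$ alike. Proving or refuting the termination of the weak inverse Goodstein sequence is therefore equivalent, within each of these systems, to proving or refuting the well-foundedness of the notation system $\phiomega$ introduced above.

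For the negative part, I would appeal to the classical ordinal-analytic fact that the proof-theoretic ordinal of $\deltaonecr$ is precisely $\phiomega$ (this is the reason $\phiomega$ is the natural target ordinal in the literature on weak Goodstein sequences, cf.~\cite{RW11}, and it fits the pattern of the analyses surveyed in~\cite{sep-proof-theory}). Combined with the standard fact that a consistent theory extending $\rca_0$ cannot prove the well-foundedness of a notation system for its own proof-theoretic ordinal, this yields that $\deltaonecr$ does not prove $\phiomega$ to be well founded; via the reduction above, $\deltaonecr$ cannot prove the existence of a well founded $W$-sequence termination point.

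For the positive part, I would invoke the fact that the proof-theoretic ordinal of $\deltaoneca$ is strictly larger than $\phiomega$ -- the passage from a comprehension rule to a comprehension axiom is known to raise the ordinal well beyond $\phiomega$. Hence $\deltaoneca$ already proves the well-foundedness of $\phiomega$, so, once more by Proposition~\ref{prop:weak_goodstein_phiomega}, it proves that a well founded (and, by Lemma~\ref{lem:wf_fp_strong}, strong) $W$-sequence termination point exists, which is precisely the termination of the weak inverse Goodstein sequence.

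The main obstacle here is not mathematical but bibliographic: different sources fix slightly different variants of $\deltaonecr$ and $\deltaoneca$ (depending on the admissible strength of induction and on whether comprehension is formulated as a rule or an axiom), and each variant has its own proof-theoretic ordinal. The crux is pinning down references that match exactly the variants specified in the statement; once the correct citations are in place, each half of the corollary becomes a one-line consequence of the reduction through Proposition~\ref{prop:weak_goodstein_phiomega}.
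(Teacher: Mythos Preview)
Your proposal is correct and follows essentially the same route as the paper: reduce via Proposition~\ref{prop:weak_goodstein_phiomega} to the well-foundedness of $\phiomega$, then invoke the proof-theoretic ordinals of $\deltaonecr$ and $\deltaoneca$ (the paper cites \cite{sep-proof-theory} and records these as $\varphi_\omega(0)$ and $\varphi_{\varepsilon_0}(0)$, respectively). Your bibliographic caveat is well taken but not an actual gap---the paper simply fixes its reference and proceeds in one line.
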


\begin{proof}
	See \cite{sep-proof-theory}, we use the proof-theoretic ordinals of $\deltaonecr$ and $\deltaoneca$, which are equal to $\varphi_\omega(0)$ and $\varphi_{\varepsilon_0}(0)$, respectively.
\end{proof}

Of course, our results can be transferred to set theory since $\rca_0$ is sound:

\begin{corollary}[$\zfc$]
	The weak inverse Goodstein sequence, i.e.~the increasing $\widetilde{W}$-sequence (in the sense of Definition \ref{def:zfc_inverse_d_sequence}), terminates with $\nu(\widetilde{W}, 0) = \varphi_\omega(0)$.
\end{corollary}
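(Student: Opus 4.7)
The plan is to obtain this corollary as a direct synthesis of the termination theorem \ref{thm:d_seq_terminates}, the bridge lemma \ref{lem:zfc_fp_is_soa_fp}, and the second-order computation Proposition \ref{prop:weak_goodstein_phiomega}. No new calculations are needed in either $\rca_0$ or $\zfc$; all the work has already been done.

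Concretely, working in $\zfc$, I would first verify that $W$ restricts to a genuine dilator on well orders. This is routine: because $W(X)$ uses only finite exponents $m_i \in \n$, every element of $W(X)$ can be coded as a finite sequence of pairs in $\n \times X$ ordered lexicographically, so $W(X)$ embeds into a fixed finite iteration of $\omega$-exponentials over $X$, and well-foundedness transfers already in $\aca_0$. Let $\widetilde{W}$ denote the corresponding set-theoretic dilator. Applying Theorem \ref{thm:d_seq_terminates} to $\widetilde{W}$ yields termination and produces a unique ordinal $\alpha := \nu(\widetilde{W}, 0)$. Lemma \ref{lem:zfc_fp_is_soa_fp} then tells us that $\alpha$, equipped with the corresponding sequence $(A^W_\gamma)_{\gamma < \alpha}$, constitutes a strong $W$-sequence termination point in the sense of Definition \ref{def:soa_d_sequence_fixed_point}.

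Since Proposition \ref{prop:weak_goodstein_phiomega} is provable in $\rca_0$, it is available in $\zfc$ \emph{a fortiori}, so $\alpha$ is equimorphic to the notation system $\varphi(\omega, 0)$. In $\zfc$, any two mutually embeddable well orders are isomorphic, and the syntactic $\varphi(\omega, 0)$ is canonically isomorphic to the set-theoretic Veblen ordinal $\varphi_\omega(0)$; chaining the two isomorphisms with the equimorphism yields $\alpha = \varphi_\omega(0)$. There is no real obstacle: the only point requiring minor care is verifying that the syntactic notation $\varphi(\omega, 0)$ really matches the classical Veblen ordinal, but this is standard and implicit in the design of the notation system.
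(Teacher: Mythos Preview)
Your proposal is correct and matches the paper's approach; the paper does not give a separate proof environment for this corollary but simply prefaces it with the remark that ``our results can be transferred to set theory since $\rca_0$ is sound,'' which is exactly the mechanism you spell out via Theorem~\ref{thm:d_seq_terminates}, Lemma~\ref{lem:zfc_fp_is_soa_fp}, and Proposition~\ref{prop:weak_goodstein_phiomega}. Your added remark that $W$ is easily seen to be a dilator (unlike $G$, where this required $\aca'_0$) is a useful point the paper leaves implicit.
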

Similar to before, we need to construct two embeddings: One from $\phiomega$ into our termination point and another from the termination point into $\phiomega$. Again, we actually work with the $1$-fixed point of $W$ instead of its termination point using Theorem~\ref{thm:d_sequence_fp_is_1_fp}. Previously, the latter direction was almost trivial. Now, this is not the case anymore. However, in Lemma~\ref{lem:embed_1_fp_weak_phiomega}, where we prove this result, we can directly use ideas from \cite{RW93}.

The former direction is more involved. Like in the previous section, we give a short overview of our steps:
\begin{itemize}
	\item We begin by proving that $\phiomega$ embeds into the initial Bachmann-Howard fixed point of a certain predilator $D$. (Lemma~\ref{lem:phiomega_embeds_into_bhfp_of_D})
	\item We define a notation system $\mathfrak{P}$ together with a suborder $\psiOmegaomega$. As the name suggests, this is supposed to be isomorphic to the linear order in Buchholz's notation. However, our version of $\psiOmegaomega$ is already in a certain normal form, similar to the $\Omega$-normal form with collapsed coefficients from the previous section. (Definition~\ref{def:notation_system_frak_P_and_psiOmegaomega})
	\item We show that a suborder of $\psiOmegaomega$ is a $1$-fixed point of $\omega \circ D$. Using the connection between Bachmann-Howard fixed points and $1$-fixed points, this yields an embedding from $\phiomega$ into $\psiOmegaomega$. (Lemma~\ref{lem:embed_phiomega_psiOmegaomega})
	\item We construct an embedding from $\psiOmegaomega$ into the $1$-fixed point of $W$. Together with the previous step, this yields our claim. (Lemma~\ref{lem:embed_psiOmegaomega_1_fp_weak})
\end{itemize}

\begin{lemma}[$\rca_0$]\label{lem:phiomega_embeds_into_bhfp_of_D}
	The order $\phiomega$ embeds into an initial Bachmann-Howard fixed point of the predilator $D(X) := \omega^X + \omega \times X$.
\end{lemma}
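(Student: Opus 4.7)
My plan mimics the proof of Proposition~\ref{prop:goodstein_bachmann_second_order}: I will construct an initial Bachmann-Howard fixed point of $D$ explicitly as a term system analogous to the order $C$ from Section~\ref{sec:inverse_Goodstein}, and then embed $\phiomega$ into it by structural recursion. Initiality will be established by invoking Theorem~2.9 of~\cite{Freund22}, which requires surjectivity of the collapse together with a height function on the underlying order.

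I would introduce a linear order $Y$ of formal $\vartheta$-expressions built up in stages, together with a collapse $\vartheta \colon D(Y) \to Y$ defined piecewise on the two summands of $D$. On the $\omega^Y$-summand, an $\omega$-normal-form sum $\omega^{\gamma_0} + \dots + \omega^{\gamma_{k-1}}$ is collapsed to a term that represents that sum as an element of $Y$; on the $\omega \times Y$-summand, a pair $(n, x)$ is collapsed to a term representing $\varphi(n+1, x)$. The index shift segregates the two halves of $D$ into Veblen indecomposables of head-level $0$ (from $\omega^Y$) and of head-level $\geq 1$ (from $\omega \times Y$). Working with a formal term system rather than identifying $Y$ with $\phiomega$ directly sidesteps the degeneracies $\omega^\gamma = \gamma$ at Veblen fixed points, which would otherwise break the support axiom.

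Condition~(ii) of a Bachmann-Howard collapse is immediate from the construction, since each $\gamma_i$ lies below the leading term of its sum and $x$ lies below $\varphi(n+1, x)$. Condition~(i) is obtained by a case analysis on the two summands of $D$: the intra-$\omega^Y$ case reduces to lexicographic comparison, the mixed case follows from the coproduct ordering on $D$, and the intra-$\omega \times Y$ case splits along the three clauses of the Veblen comparison of $\varphi(n, x)$ and $\varphi(m, y)$. The delicate asymmetric clause ($n > m$ with $\varphi(n, x) < y$) is handled via the support hypothesis $\{x\} <_{\fin} \vartheta((m, y))$ together with the consequence $y <_{\fin} \vartheta((m, y)) = \varphi(m + 1, y)$ of~(ii), which together yield $\vartheta((n, x)) < y \leq \vartheta((m, y))$. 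Initiality of $Y$ then follows from Theorem~2.9 of~\cite{Freund22} applied to the surjective collapse and a height function inherited from the stage of construction. Finally, the embedding $\iota \colon \phiomega \to Y$ is defined recursively: $\iota(x_0 + \dots + x_{k-1})$ is the collapse of the $\omega$-normal-form sum with exponents $\iota(x_0), \dots, \iota(x_{k-1})$, $\iota(\varphi(0, x))$ is the collapse of the singleton sum $\omega^{\iota(x)}$, and $\iota(\varphi(n, x))$ for $n \geq 1$ is the collapse of $(n - 1, \iota(x)) \in \omega \times Y$. Order-preservation is a further case analysis that parallels the verification of~(i). The main obstacle is precisely this asymmetric Veblen clause, where the support condition must compensate for a mismatch between the order on $\omega \times Y$ inside $D$ and the Veblen comparison on the image.
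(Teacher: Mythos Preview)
Your approach is workable but differs from the paper's and is considerably more laborious. The paper does not construct the initial Bachmann-Howard fixed point explicitly; it simply takes the canonical $\vartheta(D)$ (whose existence and initiality are provided by the ambient theory) and defines a map $f\colon\phiomega\to D(\vartheta(D))$ by $f(0)=\langle 0,\langle\rangle\rangle$, $f(x_0+\dots+x_{n-1})=\langle 0,\langle\vartheta(f(x_0)),\dots,\vartheta(f(x_{n-1}))\rangle\rangle$, and $f(\varphi(n,x))=\langle 1,\langle n,\vartheta(f(x))\rangle\rangle$ with no index shift. One then shows by induction on term height that $\vartheta\circ f$ is order-preserving. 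This avoids building a term system $Y$, defining an order on it, verifying that an explicit collapse satisfies (i) and (ii), and invoking Theorem~2.9 of \cite{Freund22} for initiality. Your index shift (routing $\varphi(0,x)$ through the $\omega^Y$ summand) is harmless but unnecessary: the paper sends every $\varphi(n,x)$, including $n=0$, to the second summand.

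There is also a conceptual tangle in your sketch. The ``three clauses of Veblen comparison'' and in particular the ``asymmetric clause $n>m$'' do not arise when verifying condition~(i) of the collapse, because $\sigma<\tau$ in $\omega\times Y$ already forces $n\le m$. Those cases only appear when checking that $\iota$ is order-preserving (or when defining the order on your term system $Y$). In that context you invoke ``the support hypothesis $\{x\}<_{\fin}\vartheta((m,y))$'', but there is no such hypothesis available: the embedding check carries no support premise. The correct argument---which is exactly what the paper uses---is that from $\varphi(n,x)<y$ the induction hypothesis gives $\iota(\varphi(n,x))<\iota(y)$, and then condition~(ii) of the collapse yields $\iota(y)<\iota(\varphi(m,y))$.
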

We understand this predilator to map any linear order $X$ to terms
\begin{enumerate}[label=(\roman*)]
	\item $\langle 0, \sigma \rangle$ with $\sigma \in \omega^X$,
	\item $\langle 1, \langle n, x \rangle \rangle$ with $n \in \n$ and $x \in X$.
\end{enumerate}
The order is given lexicographically. Morphisms $f: X \to Y$ map terms $\langle 0, \sigma \rangle$ to $\langle 0, (\omega^f)(\sigma) \rangle$ and terms $\langle 1, \langle n, x \rangle \rangle$ to $\langle 1, \langle n, f(x) \rangle \rangle$. Finally, the former term has support $\supp_{X}^{\omega^X}(\sigma)$ and the latter term has support $\{x\}$.
\begin{proof}
	Let $\vartheta: D(\vartheta(D)) \to \vartheta(D)$ be the collapse for the canonical (and initial) Bachmann-Howard fixed point $\vartheta(D)$.
	We define a map $f: \phiomega \to D(\vartheta(D))$:
	\begin{align*}
		f(0) &:= \langle 0, \langle \rangle \rangle\comma\\
		f(x_0 + \dots + x_{n-1}) &:= \langle 0, \langle \vartheta(f(x_0)), \dots, \vartheta(f(x_{n-1})) \rangle \rangle\comma\\
		f(\varphi(n, x)) &:= \langle 1, \langle n, \vartheta(f(x)) \rangle \rangle\period
	\end{align*}
	Again, we will only see during the following induction that $f$ is well defined on sums since $\omega^X$ requires (weakly) descending sequences for linear orders $X$.
		
	Let us show that $\vartheta \circ f$ is an embedding: Assume $x, y \in \phiomega$ with $x < y$ and assume that we have already shown that $\vartheta \circ f$ is an embedding for any two terms whose sum of heights is shorter than that of $x$ and $y$. The case where $x = 0$ holds is clear. If $x$ and $y$ are sums with $x = x_0 + \dots + x_{n-1}$, we have $x_i < y$ for any $i < n$ by transitivity. By induction hypothesis, this yields $\vartheta(f(x_i)) < \vartheta(f(y))$ for any $i < n$. Combining this with $f(x) < f(y)$, which clearly follows from $x < y$, the induction hypothesis, and both $x$ and $y$ being sums, we conclude $\vartheta(f(x)) < \vartheta(f(y))$.
	
	Let $x = x_0 + \dots + x_{n-1}$ and $y = \varphi(m, y')$. This implies $x_i < \varphi(m, y')$ for $i < n$. By induction hypothesis, we have $\vartheta(f(x_i)) < \vartheta(f(\varphi(m, y')))$ for ${i < n}$. Thus, the inequalities $\supp_{\vartheta(D)}(f(x)) <_{\fin} \vartheta(f(\varphi(m, y'))$ and $f(x) < f(\varphi(m, y'))$ yield our claim ${\vartheta(f(x)) < \vartheta(f(y))}$.
	
	Let $x = \varphi(n, x')$ and $y = y_0 + \dots + y_{m-1}$. This implies $x \leq y_0$. By induction hypothesis, we have $\vartheta(f(x)) \leq \vartheta(f(y_0))$ and, thus, $\vartheta(f(x)) \leq_{\fin} \supp(f(y))$. This results in $\vartheta(f(x)) < \vartheta(f(y))$.
	
	Let $x = \varphi(n, x')$ and $y = \varphi(m, y')$. In the first case, we have $n < m$ and $x' < y$. From the former, we have $f(x) < f(y)$. From the latter and the induction hypothesis, we get $\supp_{\vartheta(D)}(f(x)) <_{\fin} \vartheta(f(y))$. This results in $\vartheta(f(x)) < \vartheta(f(y))$.
	In the second case, we have $n = m$ and $x' < y'$. By induction hypothesis, we have $\vartheta(f(x')) < \vartheta(f(y'))$. This yields both $f(x) < f(y)$ and the chain of inequalities $\supp_{\vartheta(D)}(f(x)) <_{\fin} \vartheta(f(y')) \leq_{\fin} \supp_{\vartheta(D)}(f(y)) <_{\fin} \vartheta(f(y))$. Thus, we get ${\vartheta(f(x)) < \vartheta(f(y))}$.
	In the third and last case, we have $n > m$ and $x < y'$. By induction hypothesis, this yields $\vartheta(f(x)) < \vartheta(f(y')) \leq_{\fin} \supp_{\vartheta(D)}(f(y))$. Finally, we arrive at $\vartheta(f(x)) < \vartheta(f(y))$.
\end{proof}

\begin{remark}
	There is another dilator $E$ with $E(X) := 1 + (X \times \omega) \times X$ such that we can find an embedding (even an isomorphism) from $\varphi(\omega, 0)$ to $\vartheta(E)$ (cf.~\cite[Theorem~1.4]{FreundPredicative}). However, for technical reasons, we prefer $\omega^X$ in $D(X)$ over the pairs $X \times X$ present in $E(X)$ since the former comes with information on the order of its elements, whereas in the latter, we do not know how the first and second element in a pair are ordered.
\end{remark}

\begin{definition}\label{def:notation_system_frak_P_and_psiOmegaomega}
	We define the term system $\mathfrak{P}$ together with an order and a suborder $\psiOmegaomega \subseteq \mathfrak{P}$ simultaneously:
	\begin{itemize}
		\item $\mathfrak{P}$ contains elements $x := \Omega^{m_0} \cdot \psi(x_0) + \dots + \Omega^{m_{n-1}} \cdot \psi(x_{n-1})$ for $n \in \n$, $m_i \in \n$ and $x_i \in \psiOmegaomega$ for all $i < n$. Moreover, we require $m_i \geq m_j$ for $i < j < n$ and $x_i \geq x_j$ for indices $i < j < n$ with $m_i = m_j$.
		\item The order is the usual lexicographical one. For the coefficients, we have $\psi(x) < \psi(y)$ if and only if $x < y$ holds.
		\item If $x_i < x$ for all $i < n$, then $x \in \psiOmegaomega$.
	\end{itemize}
	As before, we denote the empty sum by $0$. We may write $\psi(x)$ as abbreviation for $\Omega^0 \cdot \psi(x)$ given $x \in \psiOmegaomega$.
	We define addition in the usual way: Given elements $x := \Omega^{m_0} \cdot \psi(x_0) + \dots + \Omega^{m_{n-1}} \cdot \psi(x_{n-1})$ and $y := \Omega^{k_0} \cdot \psi(y_0) + \dots + \Omega^{k_{l-1}} \cdot \psi(y_{l-1})$ from $\mathfrak{P}$, we have
	\begin{equation*}
		x + y := \Omega^{m_0} \cdot \psi(x_0) + \dots + \Omega^{m_i} \cdot \psi(x_i) + \Omega^{k_0} \cdot \psi(y_0) + \dots + \Omega^{k_{l-1}} \cdot \psi(y_{l-1})\comma
	\end{equation*}
	where $i < n$ is the largest index with $\Omega^{m_i} \cdot \psi(x_i) \geq \Omega^{k_0} \cdot \psi(y_0)$ (or $i := -1$ if such an index does not exist).
\end{definition}

\begin{lemma}[$\rca_0$]\label{lem:addition}
	Addition is associative.
	For any $x, y, z \in \mathfrak{P}$ with $y < z$, we have $x + y < x + z$.
\end{lemma}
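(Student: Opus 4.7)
The plan is to unpack the definition of $+$ into a clean ``tail-truncation plus concatenation'' form and then let associativity and left-monotonicity fall out by case analysis on the leading monomials. For nonzero $w \in \mathfrak{P}$, I would write $\mathrm{lead}(w)$ for its leftmost summand, and for any monomial $s$ of the shape $\Omega^m \cdot \psi(u)$ let $w|_s$ denote the longest initial segment of $w$ whose summands are all $\geq s$; this is well-defined because the monomial sequence of $w$ is weakly decreasing. The definition of addition then reads $x + y = x|_{\mathrm{lead}(y)} \cdot y$ whenever $y \neq 0$, with $\cdot$ denoting syntactic concatenation of summand sequences. Before entering the main arguments I would verify three bookkeeping facts: (a) $0$ is a two-sided identity for $+$; (b) for $x, y \in \mathfrak{P}$ both nonzero, $\mathrm{lead}(x + y) = \max(\mathrm{lead}(x), \mathrm{lead}(y))$; (c) for monomials $s \leq s'$ and arbitrary $w$, the two truncations $(w|_{s'})|_s$ and $(w|_s)|_{s'}$ both equal $w|_{s'}$.

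For associativity, after disposing of the cases where one of $x, y, z$ is $0$ via (a), I would split on whether $\mathrm{lead}(y) \geq \mathrm{lead}(z)$. In the first case, (b) yields $\mathrm{lead}(y + z) = \mathrm{lead}(y)$, and applying (c) to the $x$-prefixes shows that both $(x + y) + z$ and $x + (y + z)$ unfold to $x|_{\mathrm{lead}(y)} \cdot y|_{\mathrm{lead}(z)} \cdot z$. In the second case $y + z = z$ (because $y|_{\mathrm{lead}(z)}$ is empty), so $x + (y + z) = x|_{\mathrm{lead}(z)} \cdot z$, while on the other side $(x + y)|_{\mathrm{lead}(z)}$ discards every summand originating from $y$ and, by (c), retains exactly $x|_{\mathrm{lead}(z)}$, giving the same value.

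For left-monotonicity, assume $y < z$. If $y = 0$ I compare $x$ with $x + z = x|_{\mathrm{lead}(z)} \cdot z$ position by position: on the common prefix $x|_{\mathrm{lead}(z)}$ they agree, and at the next position either $x$ has already terminated or contributes a summand strictly below $\mathrm{lead}(z)$, while $x + z$ contributes $\mathrm{lead}(z)$. If $y$ and $z$ are both nonzero, the lex order on $\mathfrak{P}$ forces $\mathrm{lead}(y) \leq \mathrm{lead}(z)$: when the leads are equal, $x + y$ and $x + z$ share the $x$-prefix $x|_{\mathrm{lead}(y)}$ and the comparison reduces directly to $y < z$; when $\mathrm{lead}(y) < \mathrm{lead}(z)$, the argument from the $y = 0$ case applies verbatim to the summands of $x + y$ past $x|_{\mathrm{lead}(z)}$. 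The only real obstacle I anticipate is notational discipline: once the truncation-plus-concatenation form together with (a)--(c) are cleanly in place, every remaining step is routine lex bookkeeping with no deeper combinatorial content.
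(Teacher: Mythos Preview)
Your proposal is correct and aligns with the paper's own proof, which consists of the single sentence ``Straightforward induction and case distinction.'' You have simply supplied the details that the paper omits: the truncation-plus-concatenation reformulation together with facts (a)--(c) is exactly the right infrastructure, and your case split on the comparison of leading monomials is the natural way to organize both claims. If anything, your argument is more direct than the paper's phrasing suggests, since you avoid any genuine induction on term structure and work purely by position-wise lexicographic comparison.
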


\begin{proof}
	Straightforward induction and case distinction.
\end{proof}

\begin{lemma}[$\rca_0$]\label{lem:embed_phiomega_psiOmegaomega}
	There is an embedding from $\phiomega$ into $\psiOmegaomega$.
\end{lemma}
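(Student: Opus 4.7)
The plan is to follow the scheme outlined at the end of the section: I will construct a $1$-fixed point of $\omega \circ D$ that sits as a suborder of $\psiOmegaomega$. Once such a fixed point $(S, \pi)$ is built, Theorem~4.2 of \cite{FR21} turns it into a Bachmann-Howard fixed point of $D$; the initiality of $\vartheta(D)$ from Lemma~\ref{lem:phiomega_embeds_into_bhfp_of_D} then yields an embedding $\vartheta(D) \hookrightarrow S \subseteq \psiOmegaomega$, and composing with the embedding $\phiomega \hookrightarrow \vartheta(D)$ of Lemma~\ref{lem:phiomega_embeds_into_bhfp_of_D} gives the claim. This is exactly the blueprint used for the Goodstein predilator in Section~\ref{sec:inverse_Goodstein}, compare Lemma~\ref{lem:subset_of_psi_G_is_fp_of_omega_G} and Proposition~\ref{prop:goodstein_bachmann_second_order}.

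The heart of the work is the definition of $\pi$. Given $\psi(y) \in \psiOmegaomega$ with $y = \Omega^{m_0} \cdot \psi(x_0) + \dots + \Omega^{m_{n-1}} \cdot \psi(x_{n-1})$, let $k \leq n$ be such that $m_i > 0$ for $i < k$ and $m_i = 0$ for $k \leq i < n$. I would take $\pi(\psi(y))$ to be the weakly decreasing sequence in $D(\psiOmegaomega)$ whose first $k$ entries are the $\omega \times \psiOmegaomega$-atoms $\langle 1, \langle m_i - 1, \psi(x_i)\rangle\rangle$ for $i < k$, and whose last entry, present precisely when $k < n$, is the single $\omega^{\psiOmegaomega}$-atom bundling the $\psi(x_j)$ for $k \leq j < n$ into a weakly decreasing sequence. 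The normal-form constraints on $y \in \mathfrak{P}$ ensure weak descent within the $\omega \times \psiOmegaomega$-prefix, and the disjoint-union order on $D(\psiOmegaomega)$ puts every $\omega \times \psiOmegaomega$-atom above every $\omega^{\psiOmegaomega}$-atom, so the combined sequence is a valid element of $(\omega \circ D)(\psiOmegaomega)$. The suborder $S$ is then chosen to consist of those $\psi(y)$ whose support $\{\psi(x_0), \dots, \psi(x_{n-1})\}$ already lies inside $S$, a condition well defined by recursion along the term height of $\psiOmegaomega$.

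To verify that $(S, \pi)$ is a $1$-fixed point, three items need to be checked. First, since the support of $\pi(\psi(y))$ is exactly $\{\psi(x_0), \dots, \psi(x_{n-1})\}$ and each such $\psi(x_i)$ is strictly below $\psi(y)$ in $\psiOmegaomega$, the term height of $\psiOmegaomega$ restricts to a compatible height function on $S$ for the relation $\triangleleft$. Second, a case analysis on the first atom where $y$ and $y'$ differ shows that $\pi$ reflects and preserves the order, the key point being that lexicographic comparison on $\omega^{D(S)}$-sequences matches lexicographic comparison on $\mathfrak{P}$-sums because $\omega \times S$-atoms dominate $\omega^S$-atoms and thus the positive-exponent prefix is compared first. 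Third, given $\sigma \in (\omega \circ D)(S)$ with $G_0(\sigma) <_{\fin} \sigma$, splitting $\sigma$ into its $\omega \times S$-prefix and the optional trailing $\omega^S$-entry lets us read off the corresponding sum $y \in \mathfrak{P}$, and the support bound $G_0(\sigma) <_{\fin} \sigma$ translates precisely to the inequalities $x_i < \psi(y)$ needed for $\psi(y)$ to lie in $\psiOmegaomega$ (and hence in $S$) with $\pi(\psi(y)) = \sigma$.

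I expect the main obstacle to be this range condition, where the bookkeeping between the two components of $D(S)$ and the single linear sum structure of $\mathfrak{P}$ must be handled carefully, particularly in the degenerate cases $k = 0$ and $k = n$ and in distinguishing whether the optional trailing $\omega^S$-entry is the empty sequence or not. Once the $1$-fixed point is in place, the remaining pieces are formal: Theorem~4.2 of \cite{FR21}, the initiality of $\vartheta(D)$, and Lemma~\ref{lem:phiomega_embeds_into_bhfp_of_D} combine to produce the desired embedding $\phiomega \hookrightarrow \psiOmegaomega$.
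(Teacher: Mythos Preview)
Your high-level strategy is exactly the paper's: build a $1$-fixed point $(S,\pi)$ of $\omega\circ D$ inside $\psiOmegaomega$, then invoke \cite[Theorem~4.2]{FR21} and the initiality of $\vartheta(D)$ from Lemma~\ref{lem:phiomega_embeds_into_bhfp_of_D}. The problem is in the concrete $\pi$ you propose.

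Your $\pi$ produces sequences in $\omega^{D(S)}$ with \emph{at most one} $\omega^{S}$-atom (the ``single trailing entry''). But the range condition requires that \emph{every} $\sigma\in(\omega\circ D)(S)$ with $G_0(\sigma)<_{\fin}\sigma$ be hit, and such $\sigma$ can have arbitrarily many $\omega^{S}$-atoms. Already $\sigma=\langle\,\langle 0,\langle\rangle\rangle,\;\langle 0,\langle\rangle\rangle\,\rangle$ (two copies of the empty $\omega^{S}$-sequence) has empty support, hence $G_0(\sigma)=\emptyset<_{\fin}\sigma$, yet it lies outside your range. More generally, even if you allowed several $\omega^{S}$-atoms, a naive ``unbundle each $t_j$ into its $\Omega^{0}$-summands'' would not be injective: $\langle\,\langle 0,\langle a,b\rangle\rangle\,\rangle$ and $\langle\,\langle 0,\langle a\rangle\rangle,\,\langle 0,\langle b\rangle\rangle\,\rangle$ would collide.

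The paper solves exactly this by reserving two exponent levels. It shifts the $\omega\times S$-atoms $\varphi(m,s)$ to level $\Omega^{m+2}$ and places each $\omega^{S}$-atom $t_j$ at level $\Omega^{1}$, encoded through an auxiliary injection $f:\omega^{\psiOmegaomega}\to\psiOmegaomega$ (with $f(\langle x_0,\dots,x_{n-1}\rangle)=x_0+\psi(x_0)+\dots+\psi(x_{n-1})$, whose range is shown to be decidable). Thus an element of $S$ looks like
\[
\Omega^{m_0+2}\cdot\psi(s_0)+\dots+\Omega^{m_{n-1}+2}\cdot\psi(s_{n-1})+\Omega\cdot\psi(f(t_0))+\dots+\Omega\cdot\psi(f(t_{k-1})),
\]
and $\pi$ reads off $\langle\varphi(m_0,s_0),\dots,\varphi(m_{n-1},s_{n-1}),t_0,\dots,t_{k-1}\rangle$. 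Level $\Omega^{0}$ is deliberately left unused, which is also what makes the $\psiOmegaomega$-condition ``$f(t_j)<s$'' go through: since $s$ has only summands of positive exponent, one may freely append the $\psi$-terms in $f(t_j)$ to the inequality $(t_j)_0<s$.

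A smaller point: elements of $\psiOmegaomega$ are the sums $y$ themselves, not $\psi(y)$; the subterms $x_i$ (not $\psi(x_i)$) are what lie in $\psiOmegaomega$, and the $\psiOmegaomega$-membership condition is $x_i<y$. Your write-up conflates these, which would cause trouble once you try to make the support bookkeeping precise.
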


\begin{proof}
	Let $D$ be the predilator with $D(X) := \omega^X + \omega \times X$ from Lemma~\ref{lem:phiomega_embeds_into_bhfp_of_D}.
	We show that a suborder of $\psiOmegaomega$ is a $1$-fixed point of $\omega \circ D$. With \cite[Theorem~4.2]{FR21},the fact that every $1$-fixed point of $\omega \circ D$ is a Bachmann-Howard fixed point of $D$, this yields our claim: $\phiomega$ embeds into an initial Bachmann-Howard fixed point~$\vartheta(D)$, which then embeds into our $1$-fixed point of $\omega \circ D$, which is a suborder of $\psiOmegaomega$.
	
	In order to ease notation, let us write $\langle x_0, \dots, x_{n-1} \rangle$ and $\varphi(n, x)$ for the elements $\langle 0, \langle x_0, \dots, x_{n-1} \rangle \rangle$ and $\langle 1, (n, x) \rangle$, respectively, in $D(X)$.
	Next, we define a function $f: \omega^{\psiOmegaomega} \to \psiOmegaomega$ with
	\begin{align*}
		&f(\langle \rangle) := 0\comma\\
		&f(\langle x_0, \dots, x_{n-1} \rangle) := x_0 + \psi(x_0) + \dots + \psi(x_{n-1})\period
	\end{align*}
	Any value of $f$ is a valid element of $\psiOmegaomega$ since $x_i \leq x_0 \leq f(\langle x_0, \dots, x_{n-1} \rangle)$ holds for any $i < n$. Moreover, $f$ is an embedding: Consider the two sequences $x := \langle x_0, \dots, x_{n-1} \rangle$ and $y := \langle y_0, \dots, y_{m-1} \rangle$ of $\omega^{\psiOmegaomega}$. Assume $x < y$. If $x = 0$ holds, the claim is clear. Otherwise, if $x > 0$ holds, then we know $x_0 \leq y_0$. Additionally, we have $\psi(x_0) + \dots + \psi(x_{n-1}) < \psi(y_0) + \dots + \psi(y_{m-1})$. Combining both using Lemma \ref{lem:addition} yields $f(x) < f(y)$.
	
	The range of $f$ is a set: Given some arbitrary element $x \in \psiOmegaomega$, we go through all the finitely many possible sequences $\langle x_0, \dots, x_{n-1} \rangle \in \omega^{\psiOmegaomega}$ such that ${x = y + \psi(x_0) + \dots + \psi(x_{n-1})}$ holds for some $y \in \mathfrak{P}$. Then, $x$ is in the range of $f$ if and only if $x = f(\langle x_0, \dots, x_{n-1} \rangle)$ holds for any of these.
	
	For the $1$-fixed point, we define suborders $S^+ \subseteq \mathfrak{P}$ and $S \subseteq \psiOmegaomega$ with $S \subseteq S^+$ as follows: $S^+$ contains terms of the form
	\begin{align*}
		s :=\ &\Omega^{m_0 + 2} \cdot \psi(s_0) + \dots + \Omega^{m_{n-1} + 2} \cdot \psi(s_{n-1})\\
		+\ &\Omega \cdot \psi(f(t_0)) + \dots + \Omega \cdot \psi(f(t_{k-1}))
	\end{align*}
	for $n, k \in \n$ and numbers $m_0 \geq \dots \geq m_{n-1}$ with $s_i \in S$ for all $i < n$ and $t_0 \geq \dots \geq t_{k-1} \in \omega^S$.
	Notice that it can be decided whether some element lies in $S^+$ since the range of $f$ is a set.
	If $s_i < s$ holds for all $i < n$ and also $(t_j)_0 < s$ for all $j < k$ with $t_j \neq \langle\rangle$, then $s$ is an element of $S$.
	
	We want to verify that $S \subseteq \psiOmegaomega$ holds. Given our $s \in S$ from above, the only requirement left for this is $f(t_j) < s$ for all $j < k$. Thus, let $j < k$ be arbitrary. If $t_j = \langle \rangle$, we have $f(t_j) = 0 < \Omega \cdot \psi(f(t_j)) \leq s$. Otherwise, we already know $(t_j)_0 < s$ by assumption. Now, since $s$ only consists of summands $\Omega^\gamma \cdot \delta$ with $\gamma > 0$, we can add arbitrary $\psi$-terms to the left hand side of this inequality as we like. Thus, $f(t_j) < s$ holds. 
	
	We define a map $\pi: S \to (\omega \circ D)(S)$ such that we have the following for our exemplary $s$ from above:
	\begin{align*}
		\pi(s) := \langle& \varphi(m_0, s_0), \dots, \varphi(m_{n-1}, s_{n-1}), t_0, \dots, t_{k-1} \rangle\period
	\end{align*}
	First, the values $t_i$ for $i < k$ are well defined since $f$ is an embedding. Second, we have $\varphi(m_i, s_i) \geq_{D(S)} \varphi(m_j, s_j)$ for $i < j < n$ (recall, that this is in the context of $D(S)$, so we have a simple lexicographical order on these terms), $t_i \geq_{D(S)} t_j$ for $i < j < k$ (since $f$ is an embedding), and $\varphi(m_i, s_i) >_{D(S)} t_j$ for $i < n$ and $j < k$ (since $\varphi$-terms are always greater than sequences in the context of $D(S)$). We conclude that the value of $\pi(s)$ is an element of $(\omega \circ D)(S)$. Using the usual arguments, it can easily be seen that $\pi$ is an embedding.
	
	For the height function demanded by $1$-fixed points, we take the amount of symbols in our terms. Now, we show that restricting $\pi$ to terms in $S$ makes it a collapse. First, it is clear that any element of the support $\supp_S(\pi(s))$ is strictly smaller than $s \in S$ from our previous considerations and the fact that ${(t_i)_j \leq (t_i)_0 < s}$ holds for any $i < k$ (such that $t_i$ is different from $0$ and $j$ is less than the length of $t_i$).
	
	Finally, consider some term $\sigma \in (\omega \circ D)(S)$ with $G_0(\sigma) <_{\fin} \sigma$. From our previous considerations on the order of elements in $D(S)$, we know that it must be of the expected form
	\begin{equation*}
		\sigma = \langle \varphi(m_0, s_0), \dots, \varphi(m_{n-1}, s_{n-1}), t_0, \dots, t_{k-1}\rangle
	\end{equation*}
	with $\varphi(m_i, s_i) \geq_{D(S)} \varphi(m_j, s_j)$ for $i < j < n$, $t_i \geq_{D(S)} t_j$ for $i < j < k$, and $\varphi(m_i, s_i) \geq_{D(S)} t_j$ for $i < n$ and $j < k$. Therefore, we can already find some element $s \in S^+$ with $\pi(s) = \sigma$ (if we allow this extension of the domain of $\pi$ for the moment). Moreover, from $G_0(\sigma) <_{\fin} \sigma$ and $\pi$ being an embedding, we know that $s_i$ and $(t_j)_0$ must be strictly below $s$ for $i < n$ and $j < k$ such that $t_j \neq 0$. By definition, this results in $s \in S$.
\end{proof}

\begin{lemma}[$\rca_0$]\label{lem:embed_psiOmegaomega_1_fp_weak}
	There is an embedding from $\psiOmegaomega$ into any $1$-fixed point of the predilator $W$.
\end{lemma}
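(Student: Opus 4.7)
The plan is to construct an embedding $e : \psiOmegaomega \to X$ directly, for any $1$-fixed point $(X,\pi)$ of $W$; by uniqueness of $1$-fixed points (essentially Corollary~\ref{cor:strong_fp_unique} combined with Theorem~\ref{thm:d_sequence_fp_is_1_fp}) it suffices to embed into one specific $X$. Since each element of $X$ is determined by $\pi(x) \in W(X)$, defining $e(x)$ amounts to specifying a $W(X)$-term and verifying that it satisfies the range condition $G_0(\tau)<_{\fin}\tau$ so that $\pi^{-1}$ may be applied.

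The naive recipe is, for $x = \Omega^{m_0}\cdot\psi(x_0)+\dots+\Omega^{m_{n-1}}\cdot\psi(x_{n-1}) \in \psiOmegaomega$, to set
\begin{equation*}
    e(x) \;:=\; \pi^{-1}\bigl((1+X)^{m_0}\cdot(1+e(x_0))+\dots+(1+X)^{m_{n-1}}\cdot(1+e(x_{n-1}))\bigr)\period
\end{equation*}
This works verbatim when the exponents $m_i$ are strictly decreasing, but in $\psiOmegaomega$ they are only weakly decreasing, while $W(X)$ insists on strict descent. The main obstacle is therefore to handle blocks of consecutive summands sharing a common exponent. My approach is to first group $x$ into maximal blocks $\Omega^{M_j}\cdot\Psi_j$ with $M_0>\dots>M_{l-1}$, where each block $\Psi_j$ is a sum of $\psi$-terms with weakly decreasing arguments, and then collapse each block into a single $W(X)$-summand $(1+X)^{M_j}\cdot(1+\delta_j)$ by iterated application of $\pi^{-1}$. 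Concretely, $\delta_j$ is built recursively, using that in the ordinal interpretation $(1+a)+(1+b) = 1+(a+1+b)$, so that the collapsed coefficient reflects concatenation inside $X$; this amounts to implementing a partial addition on $X$ by reassembling $W(X)$-terms.

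The two verifications to carry out are that each intermediate coefficient $\delta_j$ produced during this grouping actually lies in $X$, i.e.~that the assembled $W(X)$-term satisfies $G_0(\tau)<_{\fin}\tau$, and that $e$ is order-preserving. The former follows by tracking supports: by the inductive hypothesis, every element appearing in the support of any $\pi(e(x_i))$ is smaller than $e(x_i)$, and each step of the grouping only combines elements whose supports are already controlled, which propagates the range condition. The latter is shown by induction on the sum of heights of $x,y \in \psiOmegaomega$ with $x<y$, comparing the grouped normal forms summand by summand; at the decisive position one appeals to the induction hypothesis (either on the exponent or on a $\psi$-argument) and lifts the inequality through $\pi$, which is itself an embedding. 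The principal technical difficulty, as in similar arguments in Section~\ref{sec:inverse_Goodstein}, will be the careful bookkeeping needed to ensure the range condition is preserved through the iterated collapses, especially when the recursion on $e(y)$ and the recursion defining $\delta_j$ interleave.
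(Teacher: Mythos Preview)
Your approach has a genuine gap: collapsing each block $\Omega^{M}\cdot(\psi(x_0)+\dots+\psi(x_{k-1}))$ to a single summand $(1+X)^{M}\cdot(1+\delta)$ via the ordinal identity $(1+a)+(1+b)=1+(a+1+b)$ is \emph{not} injective, so $e$ fails to be an embedding. Concretely, take
\[
x = \Omega^0\cdot\psi(0) + \Omega^0\cdot\psi(0)
\qquad\text{and}\qquad
y = \Omega^0\cdot\psi(\psi(0))
\]
in $\psiOmegaomega$; one checks $x<y$. With your recipe, $e(0)=\pi^{-1}(0)$ is the minimum of $X$, and the coefficient for the block in $x$ is $\delta = e(0)+1+e(0)$, which in ordinal terms is $0+1+0=1$, realised in $X$ as $\pi^{-1}\bigl((1+X)^0\cdot(1+e(0))\bigr)$. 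But this is exactly $e(\psi(0))$, so $e(x)=\pi^{-1}\bigl((1+X)^0\cdot(1+e(\psi(0)))\bigr)=e(y)$. The problem is intrinsic: you are trying to encode a finite weakly-decreasing sequence of $X$-elements by a single element of $X$, and there is no room for this when the target exponent is kept equal to $M$.

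The paper's proof avoids this by \emph{doubling} the exponents: a block at level $m$ is sent to \emph{two} summands, at exponents $2m+2$ and $2m+1$, with coefficients encoding respectively the leading element $(\sigma)_0$ of the block and a recursively defined remainder $f(\sigma)$. The crucial point is that $f$ is not an embedding (the paper notes this explicitly), but it becomes order-preserving once the leading elements agree, and the leading elements are compared separately at the higher exponent $2m+2$. This separation of ``head'' and ``tail'' into distinct exponent slots is precisely what prevents the collision above. If you want to repair your argument, you will need some analogous device that creates extra room in the exponents; simply reassembling $W(X)$-terms at the original exponent cannot work.
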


\begin{proof}
	Assume that $\pi: \psi_1(W) \to W(\psi_1(W))$ is the collapse of~$\psi_1(W)$.
	Let us define a map $f: \omega^{\psiOmegaomega} \to \psiOmegaomega$ similar to before:
	\begin{align*}
		&f(\langle \rangle) := 0\comma\\
		&f(\langle x_0, \dots, x_{n-1} \rangle) := x_0 + \psi(x_1) + \dots + \psi(x_{n-1})\period
	\end{align*}
	Notice, that we are omitting $\psi(x_0)$ in the sum of $\psi$-terms this time. Using the argument from before, it is clear that $f$ maps into $\psiOmegaomega$. However, omitting $\psi(x_0)$ has the effect that $f$ is not an embedding and not even injective anymore.\footnote{$f(\langle \Omega \cdot \psi(0) + \psi(0), \psi(0)\rangle) = \Omega \cdot \psi(0) + \psi(\psi(0)) = f(\langle \Omega \cdot \psi(0), \psi(0)\rangle)$} Still, for elements $x, y \in \psiOmegaomega$ with $x \neq 0 \neq y$, we have that $x < y$ together with $x_0 = y_0$ implies $f(x) < f(y)$ as a short invocation of Lemma~\ref{lem:addition} reveals.\footnote{This is already wrong if we replace the assumption $x_0 = y_0$ with $x_0 \leq y_0$ as the previous footnote showed.}	
	Additionally, we define $g: \omega^{\psiOmegaomega} \to \mathfrak{P}$ as follows:
	\begin{equation*}
		g(\langle x_0, \dots, x_{n-1} \rangle) := \psi(x_0) + \dots + \psi(x_{n-1})\period
	\end{equation*}
	Using distributivity laws, any element $x \in \psiOmegaomega$ can uniquely be understood as
	\begin{equation*}
		x = \Omega^{m_0} \cdot g(\sigma_0) + \dots + \Omega^{m_{n-1}} \cdot g(\sigma_{n-1})
	\end{equation*}
	for $n \in \n$, natural numbers $m_0 > \dots > m_{n-1}$ and elements $\sigma_i \in \omega^{\psiOmegaomega}$  with $\sigma_i \neq \langle \rangle$ for all $i < n$. Strictly speaking, this is, of course, not a valid term. We only use this in order to ease the notation of the following function $h: \psiOmegaomega \to W(\psi_1^+(W))$:
	\begin{align*}
		&h(\Omega^{m_0} \cdot g(\sigma_0) + \dots + \Omega^{m_{n-1}} \cdot g(\sigma_{n-1})) :=\\
		&\qquad\phantom{{}+{} }(1 + \psi_1^+(W))^{2m_0 + 2} \cdot (\pi^{-1}_+ \circ h)((\sigma_0)_0)\\
		&\qquad+ (1 + \psi_1^+(W))^{2m_0 + 1} \cdot (\pi^{-1}_+ \circ h \circ f)(\sigma_0)\\
		&\qquad+ \dots\\
		&\qquad+ (1 + \psi_1^+(W))^{2m_{n-1} + 2} \cdot (\pi^{-1}_+ \circ h)((\sigma_{n-1})_0)\\
		&\qquad+ (1 + \psi_1^+(W))^{2m_{n-1} + 1} \cdot (\pi^{-1}_+ \circ h \circ f)(\sigma_{n-1})\comma
	\end{align*}
	where $\pi_+: \psi_1^+(W) \to W(\psi_1^+(W))$ is the canonical isomorphism.\footnote{We choose to write $\pi_+$ instead of $\pi^+$ in order to avoid the confusing notation $\pi^{+-1}$.} This function is defined by recursion along the length of terms (i.e.~the amount of symbols). For the application of $h((\sigma_i)_0)$ for $i < n$, this already suffices. For the application of $h(f(\sigma_i))$ for $i < n$, we have to prove that the length of $f(\sigma_i)$ is strictly less than that of the original argument: For this, let $l: \psiOmegaomega \to \n$ be such a length function. Given $i < n$, let $k$ be the length of $\sigma_i$. We have
	\begin{align*}
		l(f(\sigma_i)) &= l((\sigma_i)_0 + \psi((\sigma_i)_1) + \dots + \psi((\sigma_i)_{k-1}))\\
		&\leq l((\sigma_i)_0) + 1 + l(\psi((\sigma_i)_1) + \dots + \psi((\sigma_i)_{k-1}))\\
		&< l(\psi((\sigma_i)_0)) + 1 + l(\psi((\sigma_i)_1) + \dots + \psi((\sigma_i)_{k-1}))\\
		&= l(\psi((\sigma_i)_0) + \psi((\sigma_i)_1) + \dots + \psi((\sigma_i)_{k-1}))\\
		&= l(g(\sigma_0))
	\end{align*}
	which is less than or equal to the length of our original argument. (In the case of $n = 1$ and $m_0 = 0$, it is equal. Otherwise, it is strictly less.)
	
	Using the usual induction hypothesis, we show that $f$ is an embedding: Assume elements $x, y \in \psiOmegaomega$ with $x < y$. If $y$ is an extension of $x$, then clearly $h(y)$ is an extension of $h(x)$ and we are finished. Otherwise, let $\Omega^m \cdot g(\sigma)$ and $\Omega^k \cdot g(\tau)$ be the first summands in $x$ and $y$, respectively, at the same position such that the former is strictly less than the latter. We need to show
	\begin{multline*}
		(1 + \psi_1^+(W))^{2m+2} \cdot (\pi^{-1}_+ \circ h)(\sigma_0) + (1 + \psi_1^+(W))^{2m+1} \cdot (\pi^{-1}_+ \circ h \circ f)(\sigma)\\
		< (1 + \psi_1^+(W))^{2k+2} \cdot (\pi^{-1}_+ \circ h)(\tau_0) + (1 + \psi_1^+(W))^{2k+1} \cdot (\pi^{-1}_+ \circ h \circ f)(\tau)\period
	\end{multline*}
	If $\Omega^m \cdot g(\sigma) < \Omega^k \cdot g(\tau)$ holds due to $m < k$, we are clearly done. Otherwise, assume $m = k$ and $\sigma_0 < \tau_0$. By induction hypothesis, we have $h(\sigma_0) < h(\tau_0)$. Since $\pi^{-1}_+$ is an embedding, we quickly arrive at our claim. Finally, assume $m = k$, $\sigma_0 = \tau_0$ and $\sigma < \tau$. With this, both first summands of the inequality are equal and the only thing left to show is
	\begin{equation*}
		(1 + \psi_1^+(W))^{2m+1} \cdot (\pi^{-1}_+ \circ h \circ f)(\sigma) < (1 + \psi_1^+(W))^{2m+1} \cdot (\pi^{-1}_+ \circ h \circ f)(\tau)\period
	\end{equation*}
	Recalling our thoughts on $f$, we know that $\sigma_0 = \tau_0$ and $\sigma < \tau$ entail $f(\sigma) < f(\tau)$. Using the induction hypothesis and the fact that $\pi^{-1}_+$ is an embedding, we arrive at our claim.
	
	Finally, we need to show that $h$ already maps into $\pi(\psi_1(W))$. Starting from the usual argument $x = \Omega^{m_0} \cdot g(\sigma_0) + \dots + \Omega^{m_{n-1}} \cdot g(\sigma_{n-1})$, assume that the claim already holds for all terms of smaller height. With this, we have that both $h((\sigma_i)_0)$ and $h(f(\sigma_i))$ live in $\pi(\psi_1(W))$ for $i < n$. Thus, all coefficients of $h(x)$, i.e., $(\pi^{-1}_+ \circ h)((\sigma_i)_0)$ and $(\pi^{-1}_+ \circ h \circ f)(\sigma_i)$ are elements of $\psi_1(W)$. We conclude that $h(x)$ is an element of $W(\psi_1(W))$ (or can be interpreted as such).
	
	The final step for proving that $h(x)$ is already an element of $\pi(\psi_1(W))$ is done by showing that $(\sigma_i)_0$ and $f(\sigma_i)$ are strictly smaller than $x$ for $i < n$. In the case of the former, this is relatively straightforward: Since $x$ is a term of $\psiOmegaomega$, the conditions on its coefficients immediately transport to $(\sigma_i)_0 < x$. In the case of the latter, we start from $(\sigma_i)_0 < x$ and, similar to the proof of the previous lemma, simply add $\psi$-terms to the left hand side until we have reached $f(\sigma_i) < x$. Notice that the inequality stays intact since all summands on the right hand side are of the form $\Omega^m \cdot \psi(\sigma)$ with $m > 0$. Hence, the inequality is not affected by adding summands of the form $\Omega^0 \cdot \psi(\sigma)$ to the left hand side. We conclude that $\pi \circ h$ yields the embedding claimed by our lemma.
\end{proof}

\begin{lemma}[$\rca_0$]\label{lem:phiomega_facts}
	Let $x, y, z \in \phiomega$ be terms such that $y$ appears as a strict subterm of $z$, i.e., $y$ was used at some point in defining $z$. Then, $x \leq y$ implies the inequality $x < z$.
\end{lemma}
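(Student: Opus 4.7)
The plan is to reduce the lemma, via transitivity of $<$, to proving that every strict subterm $y$ of $z$ satisfies $y < z$; then $x \leq y < z$ gives the conclusion. A naive induction on the structure of $z$ stumbles in the case $z = \varphi(m, w)$: to see that the immediate subterm $w$ lies strictly below $z$, we would need precisely $w < \varphi(m, w)$, which is not a statement about any term smaller than $z$. I would therefore strengthen the claim and prove simultaneously, by induction on the size (number of symbols) of $z$:
\begin{enumerate}[label=(\alph*)]
\item every strict subterm of $z$ is strictly less than $z$;
\item for every $n \geq h(z)$, we have $z < \varphi(n, z)$.
\end{enumerate}

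The base case $z = 0$ is immediate. In the sum case $z = z_0 + \dots + z_{k-1}$ with $k \geq 2$, part (a) follows from the descending condition $z_i \leq z_0$ together with the order rule ``$\varphi(\cdot,\cdot) < y_0 + \dots$ iff $\varphi(\cdot,\cdot) \leq y_0$'', which yields $z_i \leq z_0 < z$; deeper subterms of each $z_i$ are dispatched by IH(a). For part (b), one unfolds to the requirement $z_0 < \varphi(n, z)$; writing $z_0 = \varphi(p, u)$, the only delicate sub-case is $p < n$, which demands $u < \varphi(n, z)$. Here I would assemble the chain
\begin{equation*}
u < z_0 < \varphi(n, z_0) < \varphi(n, z),
\end{equation*}
whose three steps come respectively from IH(a) applied to $z_0$, IH(b) applied to $z_0$ (legal since $n > p = h(z_0)$), and the order rule for $\varphi$ applied to the strict inequality $z_0 < z$. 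The remaining sub-cases $p = n$ and $p > n$ are handled directly from $u < z_0 < z$ and from $z_0 < z$ itself.

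In the $\varphi$-case $z = \varphi(m, w)$, part (a) is immediate: IH(b) on $w$ with $n' := m \geq h(w)$ gives $w < \varphi(m, w) = z$, and IH(a) on $w$ covers the deeper subterms. For part (b) with $n \geq m$, the case $n = m$ unfolds via the order rule to $w < \varphi(m, w)$, which is precisely IH(b) on $w$; the case $n > m$ unfolds to $w < \varphi(n, z)$, which I would obtain by the analogous chain $w < \varphi(n, w) < \varphi(n, z)$, using IH(b) on $w$ followed by the order rule applied to the just-established $w < z$. The main obstacle is recognising that (a) alone does not admit a clean induction and that (b) has to be carried along; the circular-looking temptation to derive $v < \varphi(n, w)$ directly from $v < w$ is always defused by routing through the $\varphi$-cover of the strict subterm, where IH(b) is available.
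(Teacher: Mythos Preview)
Your proof is correct. The paper's own proof is the single line ``Straightforward induction,'' so your argument simply supplies the details; in particular, your observation that the bare claim (a) does not close under the $\varphi$-case and must be loaded with the auxiliary statement (b), $z < \varphi(n,z)$ for $n \geq h(z)$, is exactly the kind of standard strengthening such an induction needs, and all your uses of the induction hypothesis are on strictly smaller terms with the head constraint $n \geq h(\cdot)$ properly verified.
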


\begin{proof}
	Straightforward induction.
\end{proof}

\begin{lemma}[$\rca_0$]\label{lem:embed_1_fp_weak_phiomega}
	There is an embedding from any $1$-fixed point of $W$ into the order $\phiomega$.
\end{lemma}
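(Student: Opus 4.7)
My plan is to construct the embedding $f\colon Y \to \phiomega$ directly by recursion on the canonical height function of the $1$-fixed point $(Y, \pi)$. Given $y \in Y$ with
\[
\pi(y) = (1+Y)^{m_0} \cdot (1+\delta_0) + \dots + (1+Y)^{m_{n-1}} \cdot (1+\delta_{n-1})
\]
(where $m_0 > \dots > m_{n-1}$), I would build $f(y)$ as a sum $\varphi(m_0, \beta_0) + \dots + \varphi(m_{n-1}, \beta_{n-1})$ in which the inner arguments $\beta_i$ are defined recursively from right to left. Setting $\beta_{n-1}$ in terms of $f(\delta_{n-1})$ (with the case $m_{n-1} = 0$ treated separately as a successor term) and inductively $\beta_i := \varphi(m_{i+1}, \beta_{i+1}) + f(\delta_i) + 1$ for $i < n-1$, this inflation simultaneously achieves two goals: it forces $\beta_i > \varphi(m_{i+1}, \beta_{i+1})$, so that the outer sum is strictly decreasing and hence a valid $\phiomega$-term, and it makes $\beta_i$ an additive sum of length at least two, so that its head function value is $0 \leq m_i$ and the $\varphi(m_i, \beta_i)$ are themselves well-formed.

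The key invariant I would carry through the recursion is that $f(y) < \varphi(m_0 + 1, 0)$ whenever $\pi(y)$ has leading exponent $m_0$. Since $\pi$ is an order embedding and any $\delta_i$ lies in $\supp_Y(\pi(y))$, the fixed-point structure forces $\pi(\delta_i) < \pi(y)$ lexicographically, so the leading exponent of $\pi(\delta_i)$ is at most $m_0$. By the inductive hypothesis this gives $f(\delta_i) < \varphi(m_0 + 1, 0)$, and using the fixed-point identity $\varphi(m_0, \varphi(m_0 + 1, 0)) = \varphi(m_0 + 1, 0)$ together with the fact that any finite sum of $\varphi$-terms with head at most $m_0$ and argument below $\varphi(m_0+1, 0)$ stays below $\varphi(m_0 + 1, 0)$, the invariant is preserved.

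The main obstacle is the verification that $f$ preserves the order. Given $y_1 < y_2$ in $Y$, I would perform a case analysis on how $\pi(y_1)$ and $\pi(y_2)$ differ lexicographically. If $\pi(y_2)$ is a proper extension of $\pi(y_1)$, the extra summands strictly enlarge $\beta_{n-1}$ on the $y_2$-side, and the strict increase propagates through the right-to-left recursion up to $\beta_0$, giving $\varphi(m_0, \beta_0^{y_2}) > \varphi(m_0, \beta_0^{y_1})$ and hence $f(y_2) > f(y_1)$ since the leading summand dominates (by the invariant and the ordinal bound $\varphi(m_0, \alpha) \cdot k < \varphi(m_0, \alpha + 1)$). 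If the first disagreement is at position $i$ with $m_i < m'_i$, strict monotonicity of $\varphi$ together with the invariant applied to the $\beta_j$-tails yields the desired inequality. If the disagreement is in the coefficient at position $i$, the inductive hypothesis gives $f(\delta_i^{y_1}) < f(\delta_i^{y_2})$, which again propagates upwards. Lemma~\ref{lem:phiomega_facts} is the main tool for the repeated subterm comparisons involved in these cases.
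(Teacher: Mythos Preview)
Your construction does not preserve the order in the coefficient-disagreement case, and the issue is structural rather than a matter of choosing the base case well. The problem is that your $\beta_i$ carries the entire tail of $\pi(y)$ from position $i+1$ onward via the summand $\varphi(m_{i+1},\beta_{i+1})$, but when $\pi(y_1)$ and $\pi(y_2)$ first disagree in a coefficient at position $i$, the tails beyond position $i$ are completely unrelated. Concretely: take $a,b,c\in Y$ with $\pi(a)=0$, $\pi(c)=(1+Y)^0\cdot(1+a)$, $\pi(b)=(1+Y)^2\cdot(1+a)$, and set $\pi(y_1)=(1+Y)^2\cdot(1+a)+(1+Y)^1\cdot(1+b)$ and $\pi(y_2)=(1+Y)^2\cdot(1+c)$. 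All these lie in the range of $\pi$ and $y_1<y_2$. With your recursion (and any reasonable base case $\beta_{n-1}\approx f(\delta_{n-1})+1$), one gets $\beta_0^{y_1}=\varphi(1,\beta_1^{y_1})+f(a)+1$ where $\beta_1^{y_1}$ involves $f(b)=\varphi(2,\ldots)$, while $\beta_0^{y_2}$ is essentially $f(c)+1=\varphi(0,1)+1$. Since $\varphi(1,\beta_1^{y_1})\geq\varphi(1,0)>\varphi(0,1)$, you obtain $\beta_0^{y_1}>\beta_0^{y_2}$ and hence $f(y_1)>f(y_2)$. Your invariant $f(y)<\varphi(m_0+1,0)$ is too coarse here: it bounds $f(b)$ only by $\varphi(3,0)$, which says nothing about the comparison with $f(c)$.

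The paper avoids this by the opposite nesting order: it defines $f(\sigma+(1+Y)^m\cdot(1+x)):=\varphi(m,f(\sigma)+(f\circ\pi)(x)+1)$, so the \emph{smallest} exponent $m_{n-1}$ sits outermost and the leading part of the argument is $f(\sigma')$ for the initial segment $\sigma'$. The comparison then reduces to initial segments, which \emph{are} comparable, together with an auxiliary hypothesis $G_0(\sigma)<_{\fin}\tau$ that controls all coefficients of $\sigma$ against $\tau$. That extra hypothesis is exactly what lets one bound the tail contributions; your scheme has no analogue for it.
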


\begin{proof}
	We basically follow (a simplified version of) the construction due to Rathjen and Weiermann in \cite[Lemma~3.11]{RW93}. Given a $1$-fixed point $\psi_1(W)$ with collapse $\pi: \psi_1(W) \to W(\psi_1(W))$, we define a map $f: W(\psi_1(W)) \to \phiomega$ as follows
	\begin{align*}
		f(0) &:= 0\comma\\
		f(\sigma + (1 + \psi_1(W))^m \cdot (1 + x)) &:= \varphi(m, f(\sigma) + (f \circ \pi)(x) + 1)\period
	\end{align*}
	Since we have not defined addition on terms of $\phiomega$, we explicitly state that the sum $f(\sigma) + (f \circ \pi)(x) + 1$ is replaced by $(f \circ \pi)(x) + 1$ if $(f \circ \pi)(x)$ gets greater than $f(\sigma)$. The additional ``$+1$'' ensures that the validity of the resulting term does not depend on the value of $(f \circ \pi)(x)$.
	
	We want to show that $\sigma < \tau$ with $G_0(\sigma) <_{\fin} \sigma$ implies $f(\sigma) < f(\tau)$. The case where $\sigma = 0$ holds, is clear. Also, if $\tau$ is an extension of $\sigma$, then $f(\sigma)$ (or a larger term, in case the sum collapses) appears as a subterm in $f(\tau)$. Thus, we get $f(\sigma) < f(\tau)$ by applying Lemma \ref{lem:phiomega_facts}. For the remaining case, we prove the following two statements by simultaneous induction of the sum of the heights of two terms $\sigma, \tau \in W(\psi_1(W))$:
	\begin{enumerate}
		\item If $\sigma = \rho + (1 + \psi_1(W))^{m_0} \cdot (1 + x_0) + \dots + (1 + \psi_1(W))^{m_{n-1}} \cdot (1 + x_{n-1})$ and $\tau = \rho + (1 + \psi_1(W))^{k} \cdot (1 + y)$ with $\sigma < \tau$ and $G_0(\sigma) <_{\fin} \tau$, then $f(\sigma) < f(\tau)$ holds.
		\item If $\sigma < \tau$ and $G_0(\sigma) <_{\fin} \sigma$, then $f(\sigma) < f(\tau)$ holds.
	\end{enumerate}
	For (1), let $\sigma$ and $\tau$ be as in the assumption. We proceed by induction on $n$: If $n = 0$, then clearly $\tau$ is an extension of $\sigma$ and, hence, $f(\sigma) < f(\tau)$ holds using the same argument as before.
	
	Otherwise, we have both $n > 0$ and $k \geq m_{n-1}$. Assume $k > m_{n-1}$. For
	\begin{align*}
		f(\tau) &= \varphi(k, f(\rho) + (f \circ \pi)(y) + 1)\\
		&> \varphi(m_{n-1}, f(\rho + (1 + \psi_1(W))^{m_0} \cdot (1 + x_0) + \dots\\
		&\qquad+ (1 + \psi_1(W))^{m_{n-2}} \cdot (1 + x_{n-2})) + (f \circ \pi)(x_{n-1}) + 1)\\
		&= f(\sigma)
	\end{align*}
	we need to show both
	\begin{equation*}
		f(\tau) > f(\rho + (1 + \psi_1(W))^{m_0} \cdot (1 + x_0) + \dots + (1 + \psi_1(W))^{m_{n-2}} \cdot (1 + x_{n-2}))
	\end{equation*}
	and $f(\tau) > (f \circ \pi)(x_{n-1})$. The former immediately follows from applying (1) inductively. For the latter, we use (2) together with $\pi(x_{n-1}) \in G_0(\sigma) <_{\fin} \tau$ and $G_0(\pi(x_{n-1})) <_{\fin} \pi(x_{n-1})$, where we recall that $G_0(\pi(x)) <_{\fin} \pi(x)$ holds for any element $x \in \psi_1(W)$.
	
	Assume $k = m_{n-1}$. Since the exponents of any element in $W(\psi_1(W))$ are strictly descending, we conclude $n = 1$. Thus, $k = m_{n-1} = m_0$ and $x_0 < y$. For ${f(\tau) = \varphi(k, f(\rho) + (f \circ \pi)(y) + 1) > \varphi(m_0, f(\rho) + (f \circ \pi)(x_0) + 1) = f(\sigma)}$, we need to show $(f \circ \pi)(x_0) < (f \circ \pi)(y)$. For this, we apply (2) to $\pi(x_0) < \pi(y)$, which follows from $x_0 < y$ since $\pi$ is an embedding. Moreover, since $G_0(\pi(x)) <_{\fin} \pi(x)$ holds for any $x \in \psi_1(W)$, the assumption $G_0(\pi(x_0)) <_{\fin} \pi(x_0)$, that is required by (2), is also satisfied.
	
	For (2), the cases where $\sigma = 0$ holds or where $\tau$ is an extension of $\sigma$ have already been done.
	Suppose $\sigma = \rho + (1 + \psi_1(W))^{m_0} \cdot (1 + x_0) + \dots + (1 + \psi_1(W))^{m_{n-1}} \cdot (1 + x_{n-1})$ and $\tau = \rho + (1 + \psi_1(W))^{k_0} \cdot (1 + y_0) + \dots + (1 + \psi_1(W))^{k_{l-1}} \cdot (1 + y_{k-1})$ with $(1 + \psi_1(W))^{m_0} \cdot (1 + x_0) < (1 + \psi_1(W))^{k_0} \cdot (1 + y_0)$. Let $\tau' := \rho + (1 + \psi_1(W))^{k_0} \cdot (1 + y_0)$. We have $\sigma < \tau' \leq \tau$. Since $\tau$ is equal to or an extension of $\tau'$, we have $f(\tau') \leq f(\tau)$ using the same argument as before. In order to derive $f(\sigma) < f(\tau')$, we can use (1) since $G_0(\sigma) <_{\fin} \sigma < \tau'$ holds.
	
	Finally, since any element $\sigma \in \pi(\psi_1(W))$ satisfies $G_0(\sigma) <_{\fin} \sigma$, we conclude that $f \circ \pi$ is the embedding claimed by our lemma.
\end{proof}

\begin{proof}[Proof of Proposition \ref{prop:weak_goodstein_phiomega}]
	We apply Theorem \ref{thm:d_sequence_fp_is_1_fp}, which provides the one-to-one correspondence between $1$-fixed points and strong $D$-sequence termination points. For the embedding from $\phiomega$ to the termination point, we invoke Lemmas \ref{lem:embed_phiomega_psiOmegaomega} and~\ref{lem:embed_psiOmegaomega_1_fp_weak}. For the other direction, we make use of Lemma \ref{lem:embed_1_fp_weak_phiomega}. Also, for the second part of the theorem, recall that any well founded $D$-sequence termination point is already strong (Lemma~\ref{lem:wf_fp_strong}).
\end{proof}

\section{Different ways to loose uniqueness}\label{sec:uniqueness}
In this section, we explore some thoughts on the uniqueness of our $D$-sequence termination points. But before that, we consider the case of $1$-fixed points and what happens if we replace the height function by well foundedness. This connects to the short discussion below Definition~1.4 in \cite{FR21}.
\begin{lemma}[$\rca_0$]\label{lem:1_fp_old_def_wkl}
	Consider a possible variation of the definition of $1$-fixed points where $\triangleleft$ is only required to be well founded (but where we still use the simplified version of $G_0$ from Lemma \ref{lem:1_fp_simplyfied_G}). Then, $\rca_0$ proves that $1$-fixed points are unique if and only if $\wkl$ holds.
\end{lemma}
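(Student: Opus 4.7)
The plan is to establish the two implications separately, with the forward direction relatively routine and the converse requiring a targeted counterexample.

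For $\wkl \Rightarrow$ variant uniqueness, I would show that under $\wkl$ every variant $1$-fixed point admits a height function, thereby reducing to the case already covered by the uniqueness of standard $1$-fixed points (Proposition~2.1 of \cite{FR21}, used in the proof of Lemma~\ref{lem:unique_hom}). Given such a variant $1$-fixed point $(X, \pi)$ of a predilator $D$, the $\triangleleft$-predecessors of any $y \in X$ lie in the finite set $\supp_X(\pi(y))$, so the tree of finite $\triangleleft$-descending sequences starting from $y$ is finitely branching, and well-foundedness of $\triangleleft$ forbids an infinite branch. Bounded K\H{o}nig's Lemma, provable in $\wkl_0$, then yields that this tree is finite; setting $h(y)$ to its depth provides a height function respecting $\triangleleft$, so the existing uniqueness theory applies directly.

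For the converse, I would argue the contrapositive: assuming $\wkl$ fails, witnessed by an infinite binary tree $T \subseteq 2^{<\omega}$ with no infinite branch, construct from $T$ a predilator $D_T$ together with two non-isomorphic variant $1$-fixed points. The strategy is to encode $T$ into the support structure of $D_T$ so that on any variant $1$-fixed point the relation $\triangleleft$ mirrors the edge relation of $T$: well-foundedness comes for free from $T$ having no infinite branch, while a height function for $\triangleleft$ would bound the depths of paths in $T$ and thereby deliver $\wkl$ via bounded K\H{o}nig's Lemma. A ``canonical'' variant $1$-fixed point built without reference to $T$ and a ``$T$-extended'' one whose additional elements carry supports referencing tree nodes should then be unprovably isomorphic in $\rca_0$, since any such isomorphism would have to supply the missing height function.

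The main obstacle is precisely this second construction: one must ensure that $D_T$ is a genuine predilator, arrange both candidate orders to satisfy the range condition (Definition~\ref{def:nu_fp}, as simplified by Lemma~\ref{lem:1_fp_simplyfied_G}), and verify that a hypothetical isomorphism actually decodes into a rank function for $T$. Since the variant definition still requires $\triangleleft$ to be well founded, the construction cannot rely on any crude extension, but must exploit the precise gap between well-foundedness and the existence of a rank function on a finitely branching relation, i.e.\ the exact content of bounded K\H{o}nig's Lemma. I expect the correct $D_T$ to be a mild variation of the successor dilator or of a predilator coding finite subsets of $T$, along the lines hinted at by the discussion below Definition~1.4 of \cite{FR21} to which the statement itself refers.
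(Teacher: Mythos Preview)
Your forward direction is correct and essentially identical to the paper's argument: under $\wkl$, the tree of $\triangleleft$-descending sequences from any element is finitely branching and effectively bounded, so bounded K\H{o}nig's Lemma gives finiteness and hence a height function, after which the standard uniqueness result applies.

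For the converse, your high-level strategy is right --- build a predilator from an infinite pathless binary tree $T$ and exhibit a canonical and an ``exotic'' $1$-fixed point --- but you leave the construction as an acknowledged gap, and your guesses (successor dilator, or one coding finite subsets of $T$) are not close enough to work. The paper's predilator $D$ is built directly from the nodes of $T$: each leaf $l$ is a constant symbol, and each internal node $t$ is a binary function symbol, so $D(X)$ consists of terms $l$ and $t(x,y)$ ordered lexicographically using a linear order on $T$. The exotic fixed point $X$ then contains $T$ itself as elements, with $\pi(t) = t(t*\langle 0\rangle, t*\langle 1\rangle)$ for internal $t$, plus further terms $P(t,x,y)$ to saturate the range condition. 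The point is that on $X$ the relation $\triangleleft$ restricted to $T$ is exactly the child--parent relation, so $\triangleleft$ is well founded (an infinite $\triangleleft$-descent would give an infinite branch of $T$), yet the root $\langle\rangle$ has infinitely many elements below it in the transitive closure of $\triangleleft$. In the canonical $\psi_1(D)$, by contrast, every element has only finitely many $\triangleleft^*$-predecessors by construction. Since a $1$-fixed-point isomorphism preserves $\triangleleft$, this is an immediate obstruction.

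Your proposed obstruction --- that an isomorphism would transport $\psi_1(D)$'s height function to $X$ and hence give a rank function for $T$ into $\n$, forcing $T$ to have bounded depth and therefore be finite --- is an equivalent way to phrase the same contradiction; note, though, that no further appeal to bounded K\H{o}nig's Lemma is needed at that point, since a binary tree of bounded depth is finite by direct counting.
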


\begin{proof}
	Let us denote the concatenation of two sequences $s$ and $t$ by $s * t$.
	For the easy direction, we assume $\wkl_0$. Consider some predilator $D$ together with a $1$-fixed point on $X$. We want to define a height function $h: X \to \n$ such that $h(x)$ is the length of the smallest sequence $s \in X^*$ with $s_0 = x$ and $s_{i+1} \triangleleft s_i$ for all $i < h(x) - 1$, for any $x \in X$. Since for any fixed length, we can give all such sequences effectively in the form of a finite list (compare the proof of Lemma~\ref{lem:wf_fp_strong}), we only need to make sure that the search of $h(x)$ terminates, i.e., that there is such a sequence $s$. For this, consider some arbitrary $x \in X$ and the smallest tree $T$ with $\langle x \rangle \in T$ such that $s * \langle y \rangle \in T$ implies $s * \langle y, z \rangle \in T$ for all $z \triangleleft y$. Clearly, this tree is finitely branching. Moreover, it can be effectively bounded (cf.~\cite[Definition~IV.1.3]{Simpson09}). Thus, an application of the bounded K\H{o}nig's Lemma (cf.~\cite[Lemma~IV.1.4]{Simpson09}) yields an infinite path in $T$, which cannot exist because $\triangleleft$ is well founded. Finally, $h(x) < h(y)$ holds for any two $x, y \in X$ with $x \triangleleft y$ by design of $h$ (again, see Lemma~\ref{lem:wf_fp_strong} for more details on a similar construction). Now, that we have a height function, this $1$-fixed point satisfies all requirements of the original definition, which entails uniqueness (cf.~\cite[Corollary~2.2]{FR21}).
	
	For the other direction, assume $\rca_0$ and $\lnot \wkl$. This means that we have access to some infinite binary tree $T$ without infinite path. In order to keep everything simple, we assume that any node has either $0$ or $2$ children. We define a linear order on $T$ such that $s < t$ holds for any two $s, t \in T$ if and only if $t$ is a proper extension of $s$ or both $s$ and $t$ are incomparable but $s$ is strictly lexicographically smaller than $t$. (Unfortunately, this is ill founded, so the following predilator cannot be a dilator:) Let $L$ be the set of leaves and $N := T \setminus L$. We consider a functor $D$ that maps any linear order $X$ to the set of terms:
	\begin{itemize}
		\item $l \in L$,
		\item $t(x, y)$ for $t \in N$ and $x, y \in X$.
	\end{itemize}
	The order is given as follows: For two elements of $L$, we just copy the order from~$L$. Similarly, if one of the elements is a leaf $l$ and the other is of the form $t(x, y)$, we simply compare $l$ and $t$. If we have two elements $t(x, y)$ and $t'(x', y')$, we use a lexicographical ordering by first comparing $t$ and $t'$, then $x$ and $x'$, and finally $y$ and $y'$. Any morphism $D(f)$ for $f: X \to Y$ maps leaves to themselves and $t(x, y)$ to $t(f(x), f(y))$. Clearly, $D$ is a predilator.
	
	Now, we construct a $1$-fixed point that differs from the canonical one. We define a term set $X^+$ and a map $\pi^+: X^+ \to D(X^+)$ simultaneously:
	\begin{itemize}
		\item $l \in L$ with $\pi^+(l) := l$,
		\item $t \in N$ with $\pi^+(t) := t(t * \langle 0 \rangle, t * \langle 1 \rangle)$,
		\item $P(t, x, y)$ for $t \in T$, $x, y \in X^+$ with $\pi^+(P(t, x, y)) := t(x, y)$ if $x \neq t * \langle 0 \rangle$ or $y \neq t * \langle 1 \rangle$.
	\end{itemize}
	Notice that the existence of and the restrictions on terms $P(t, x, y)$ ensure that $\pi^+$ is bijective.
	Next, we provide an order on this set: For the elements from $T$, we simply copy the order. For all other comparisons, we recursively reflect the order from $D(X^+)$ via $\pi^+$. Now, we restrict this term set to a suborder $X \subset X^+$: This new order $X$ contains every element from $T$ but only those terms $P(t, x, y)$ with $x < P(t, x, y)$, $y < P(t, x, y)$, and $x, y \in X$. We define $\pi: X \to D(X)$ to be the unique morphism with $D(\iota_X^{X^+}) \circ \pi = \pi^+ \circ \iota_X^{X^+}$.
	
	We check all the requirements for $1$-fixed points: First, it is clear that $\pi$ is an embedding. Second, $\triangleleft$ is well founded: Assume that we have an infinite descending sequence in $X$ with respect to this relation. Eventually, all elements in this sequence will stem from $T$. However, an infinite descending sequence in $T$ with respect to $\triangleleft$ corresponds to an infinite path in $T$, which does not exist by assumption.
	Third, we have to satisfy the requirement on the range of $\pi$: 
	
	``$\subseteq$'': For $t \in N$, we have $t * \langle 0 \rangle \triangleleft t$ and $t * \langle 1 \rangle \triangleleft t$ because of the order on $T$. For $P(t, x, y)$, we have $x \triangleleft \pi(P(t, x, y))$ and $y \triangleleft \pi(P(t, x, y))$ simply by definition of $X$.
	
	``$\supseteq$'' Consider some term $\sigma \in D(X)$ such that $\pi(s) < \sigma$ holds for any ${s \in \supp_X(\sigma)}$. If $\sigma$ is an element from $l \in L$ (or of the form $t(t * \langle 0 \rangle, t * \langle 1 \rangle)$ for $t \in T$), then we simply have $\pi(l) = \sigma$ (or $\pi(t) = \sigma$). Otherwise, $\sigma$ is of the form $t(x, y)$ for $t \in T$ and $x, y \in X$ with $\pi(x) < t(x, y)$, $\pi(y) < t(x, y)$ and one of $x \neq t * \langle 0 \rangle$ or $y \neq t * \langle 1 \rangle$ holds. Thus, it can be mapped to from $P(t, x, y)$ using $\pi$.
	
	Finally, assume that there is some $1$-fixed point isomorphism $f: X \to \psi_1(D)$ to the canonical $1$-fixed point $\psi_1(D)$. In order to distinguish the order $\triangleleft$ for both fixed points, let us indicate their origin using a subscript.
	By construction of $\psi_1(D)$ (cf.~\cite[Section~2]{FR21}), there is a finite amount of elements in $\psi_1(D)$ strictly below $f(\langle \rangle)$ with respect to the transitive closure of $\triangleleft_{\psi_1(D)}$. However, from the definition of $\pi$, we can see that any node from $T$ (except for the root node itself) lies strictly below the root $\langle \rangle$ with respect to the transitive closure of $\triangleleft_X$. Since $T$ is infinite, $f$ cannot have been an isomorphism.
\end{proof}
Omitting all requirements on $\triangleleft$ altogether completely removes uniqueness. We switch over to $D$-sequence termination points and prove the short result in this context.
\begin{lemma}[$\rca_0$]
	Assume that strong $D$-sequence termination points (Definition \ref{def:soa_d_sequence_fixed_point}) are defined without requirement (3), i.e., without a height function. Then, we can find two distinct strong $\id$-sequence termination points.
\end{lemma}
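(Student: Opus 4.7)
The plan is to exhibit two non-isomorphic linear orders which both carry a natural sequence witnessing them as strong $\id$-sequence termination points in the weakened sense (satisfying (1), (2) and (4) of Definition~\ref{def:soa_d_sequence_fixed_point} but not (3)). Recall that $\id$ is the predilator with $\id(X) = X$ and $\supp_X(x) = \{x\}$, so a $\id$-sequence termination point on a linear order $X$ simply consists of a family $(A_x^{\id})_{x \in X}$ with $A_x^{\id} \in \restr{X}{x}$ satisfying the appropriate conditions. In particular, $X$ can have neither a minimum (else (1) fails there, since $\id(\emptyset) = \emptyset$) nor a maximum (else (2) fails on it).

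My concrete candidates are $X_1 := \z$ equipped with $A_x^{\id} := x - 1$, and $X_2 := \z + \z$ (two copies of $\z$ placed consecutively) with $A_x^{\id}$ equal to the immediate predecessor of $x$ in $X_2$; note that such a predecessor exists for every element, including $(1, 0)$, whose immediate predecessor is $(1, -1)$. The required conditions can then be checked directly: for (1), the set $\{A_y^{\id} \mid y < x\}$ has maximum equal to the predecessor of $A_x^{\id}$, so $A_x^{\id}$ is exactly the minimum of $\restr{X_i}{x}$ strictly above that maximum (with the case $x = (1, 0)$ in $X_2$ being the only one that needs a moment's thought); for (2) and (4), given $\sigma \in X_i$ the unique smallest $x$ with both $\supp_{X_i}(\sigma) = \{\sigma\} \leq_{\fin} x$ and $\sigma \leq A_x^{\id}$ is the successor of $\sigma$. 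All of this is decidable and provable in $\rca_0$.

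Finally I would show $X_1 \not\cong X_2$: in $X_2$ the elements $(0, 0) < (1, 0)$ have infinitely many elements strictly between them, whereas any two distinct elements of $\z$ are separated by only finitely many elements, so no order isomorphism between $X_1$ and $X_2$ can exist. Since a $\id$-sequence isomorphism is in particular an order isomorphism of the underlying linear orders, the two termination points are distinct. There is no real obstacle here; the only point worth emphasising is that condition (3) must genuinely fail in both cases (the relation $\prec$ reduces to $x - 1 \prec x$, yielding an infinite $\prec$-descending chain inside each order), which is precisely why Corollary~\ref{cor:strong_fp_unique} does not rule out this pair of examples once (3) is dropped.
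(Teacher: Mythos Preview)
Your argument is correct: both $\z$ and $\z+\z$ with the ``predecessor'' sequence satisfy conditions (1), (2) and (4), and the two orders are not isomorphic. The verification at the seam element $(1,0)$ is the only delicate point and you have handled it properly, since the set $\{A^{\id}_y \mid y<(1,0)\}$ is exactly $\{(0,m):m\in\z\}\cup\{(1,k):k\le -2\}$, whose least strict upper bound in $\restr{X_2}{(1,0)}$ is indeed $(1,-1)$.

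The paper takes a shorter route: it uses $\emptyset$ and $\z$ as the two termination points. Your remark that ``$X$ can have neither a minimum nor a maximum'' is only valid for \emph{nonempty} $X$; when $X=\emptyset$ all four conditions hold vacuously, and this is exactly what the paper exploits. Thus the paper avoids the second-copy bookkeeping entirely. On the other hand, your pair has the minor advantage that \emph{both} examples genuinely violate (3), so it illustrates more vividly that dropping the height function is what breaks uniqueness (for $\emptyset$, condition (3) is trivially satisfied, so the paper's pair shows non-uniqueness but does not exhibit two ``bad'' objects).
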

\begin{proof}
	Since we have $\id(\emptyset) = \emptyset$, Example \ref{ex:various_d_sequence_fixed_points} suggests that the linear order $\emptyset$ is an $\id$-sequence termination point. For $\emptyset$, all requirements (even the third) collapse and become trivial.
	
	However, there is another termination point: $\z$. We define $A_{z}^{D} := z - 1$ for all $z \in \z$ and check the requirements:
	\begin{enumerate}
		\item Consider $z \in \z$. We want to show that $z - 1$ is the smallest value in $\restr{\z}{z}$ greater than $y - 1$ for all $y < z$. It is immediately clear that $z - 1$ is the smallest value in $\z$ with this property. Moreover, $z - 1 < z$ holds, which implies $z - 1 \in \restr{\z}{z}$.
		\item[(2/4)] For any element $\sigma \in \z$, we can find a smallest $z \in \z$ with $\sigma \leq z - 1$ by simply choosing $z := \sigma + 1$.\qedhere
	\end{enumerate}
\end{proof}
According to the claimed uniqueness of strong termination points, the third point has to fail: We have $x \prec y \Leftrightarrow x \in \supp_{\restr{\z}{y}}(A_{y}^{D}) = \{y - 1\}$. Thus, $x \prec y$ is equivalent to $x = y - 1$. We conclude that the decreasing enumeration of all negative values in $\z$ induces an infinite descending sequence in the relation $\prec$.

We present the last big result of this paper: A characterization of $\pica$ using the uniqueness of regular $D$-sequence termination points.
\begin{theorem}[$\rca_0$]\label{thm:pica_uniqueness_regular_fixed_point}
	The following are equivalent:
	\begin{enumerate}[label=(\alph*)]
		\item $\Pi^1_1$-comprehension,
		\item For any dilator $D$ any two regular $D$-sequence termination points are isomorphic,
		\item For any dilator $D$ given any two regular $D$-sequence termination points on $X$ and $Y$, we have that $X$ and $Y$ are isomorphic.
	\end{enumerate}
\end{theorem}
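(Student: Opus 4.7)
The implication (b) $\Rightarrow$ (c) is immediate, since any $D$-sequence isomorphism is in particular a linear-order isomorphism. The remaining work consists of proving (a) $\Rightarrow$ (b) and (c) $\Rightarrow$ (a).

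For (a) $\Rightarrow$ (b), assume $\Pi^1_1$-comprehension. Theorem \ref{thm:pica_wf_fp} combined with Corollary \ref{cor:strong_fp_unique} provides a well-founded strong termination point $Z$ of $D$, unique up to $D$-sequence isomorphism. Given any regular termination point $(X, (A^D_x))$, extract the embedding $\pi_X(x) := D(\iota_{\restr{X}{x}}^X)(A^D_x)$ following the ``(a) $\Rightarrow$ (b)'' direction of Theorem~\ref{thm:d_sequence_fp_is_1_fp}. The map $\pi_X$ meets every requirement of a $1$-fixed point except possibly the backward inclusion of the range condition that condition~(4) would supply, and condition~(2) serves as a partial substitute via a cofinality statement. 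The plan is to define a $D$-sequence homomorphism $f \colon Z \to X$ by recursion along the well-founded $Z$: at each stage, $\tau_z := D(\restr{f}{(\restr{Z}{z})})(A^{D,Z}_z) \in D(X)$ satisfies $G_0(\tau_z) <_{\fin} \tau_z$, so one seeks $f(z) \in X$ with $\pi_X(f(z)) = \tau_z$. The main obstacle here is justifying this last existence: condition~(2) alone only yields some $x$ with $\tau_z \leq \pi_X(x)$, and $\Pi^1_1$-comprehension will be needed to promote this inequality to an equality by exploiting the range condition of the canonical $1$-fixed point $\psi_1(D) \cong Z$ (Proposition~2.1 of~\cite{FR21}). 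Once $f$ is defined, injectivity follows because $\pi_X$ is an embedding, and surjectivity will be forced by condition~(2) on $X$ together with Lemma~\ref{lem:unique_hom}.

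For (c) $\Rightarrow$ (a), by Theorem \ref{thm:pica_wf_fp} it suffices to produce a well-founded strong termination point for every dilator $D$. Let $Z$ denote the canonical strong termination point of $D$, constructible in $\rca_0$ via Corollary \ref{cor:strong_fp_unique}. The plan is to construct in $\rca_0$ a second regular termination point $Y$ of $D$ whose underlying linear order is well-founded by construction --- for instance as a Kleene--Brouwer-style linearization of the tree of finite-support partial collapses of $D$, with the sequence $(B^D_y)_{y \in Y}$ defined recursively down the tree so as to satisfy condition~(1) pointwise. Once $Y$ exists, (c) supplies a linear-order isomorphism $Y \cong Z$, and well-foundedness transfers from $Y$ to $Z$, yielding $\Pi^1_1$-comprehension by Theorem \ref{thm:pica_wf_fp}. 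The main obstacle in this direction is the construction of $Y$ itself: condition~(1) forces each $B^D_y$ to be tightly determined by its predecessors in a way that must remain compatible with the functoriality of $D$; condition~(2) demands that the tree be cofinal enough that every element of $D(Y)$ is dominated by some $\pi_Y$-image; and condition~(3) asks for an explicit height function for $\prec$ constructible in $\rca_0$. Weaving these constraints together with the tree-based well-foundedness of $Y$ is the delicate step of the reversal.
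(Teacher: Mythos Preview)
Your direction (b) $\Rightarrow$ (c) is fine. Both of the nontrivial directions, however, contain genuine gaps.

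\medskip

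\textbf{(a) $\Rightarrow$ (b).} Your plan is to build $f\colon Z\to X$ by recursion along the well-founded $Z$ and at each step find $f(z)\in X$ with $\pi_X(f(z))=\tau_z$. But $X$ is only \emph{regular}, not strong: condition~(4) is exactly what guarantees that every $\sigma$ with $G_0(\sigma)<_{\fin}\sigma$ lies in the range of $\pi_X$, and you do not have it. Invoking the range condition of $\psi_1(D)\cong Z$ tells you something about $\pi_Z$, not about $\pi_X$, so it cannot manufacture the missing preimages. Your appeal to Lemma~\ref{lem:unique_hom} for surjectivity has the same defect: that lemma is stated for \emph{strong} termination points only. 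The paper avoids this by going the other way: using the height function on $X$ (which regularity does provide), it recursively builds an embedding $f\colon X\to\psi_1^+(D)$, checks that the image lands in $\psi_1(D)$, and then uses the well-foundedness of $\psi_1(D)$ (which follows from $\Pi^1_1$-comprehension via Theorem~\ref{thm:pica_wf_fp} and Corollary~\ref{cor:strong_fp_unique}) to conclude that $X$ itself is well-founded. Once $X$ is well-founded it is strong by Lemma~\ref{lem:wf_fp_strong}, and then Corollary~\ref{cor:strong_fp_unique} gives the isomorphism. The recursion direction matters: you can always map \emph{out of} a regular $X$ along its height function, but you cannot in general map \emph{into} it and hit prescribed $\pi_X$-values.

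\medskip

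\textbf{(c) $\Rightarrow$ (a).} Here the approach is wrong in principle. You propose to construct, in $\rca_0$, a regular termination point $Y$ for an arbitrary dilator $D$ that is well-founded ``by construction.'' If this were possible, Theorem~\ref{thm:pica_wf_fp} would immediately give $\Pi^1_1$-comprehension from $\rca_0$ alone, with no use of hypothesis~(c) --- which is absurd. The existence of a well-founded termination point is precisely the content of $\Pi^1_1$-comprehension; it cannot be obtained for free. The paper proceeds by contraposition: assuming some dilator $D$ has an ill-founded regular termination point (equivalently, $\psi_1(D)$ is ill-founded), it builds a new dilator $E(X):=(D(X)+1+D(X))\times\omega$ and exhibits two regular $E$-sequence termination points whose underlying linear orders are not isomorphic --- one (the canonical $\psi_1(E)$) in which every element has a successor, and a second one, carved out using an infinite descending sequence in $\psi_1(D)$, containing an element with no successor. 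This violates~(c), completing the reversal. The passage to a modified dilator $E$ and the explicit use of the descending sequence are the key ideas you are missing.
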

Note, the difference between (b) and (c) lies in the fact that an isomorphism between $D$-sequence termination points not only requires the underlying linear orders to be isomorphic but also the $D$-sequences that are defined on them (cf.~Definition~\ref{def:d_seq_homomorphism}).
The direction from (b) to (c) is trivial. The direction from (a) to (b) is covered by Theorem~\ref{thm:pica_wf_fp} together with the following proposition:
\begin{proposition}[$\rca_0$]\label{prop:wf_fp_implies_all_regular_fp_iso}
	Given a predilator $D$ with a well founded $D$-sequence termination point, any two regular $D$-sequence termination points are isomorphic.
\end{proposition}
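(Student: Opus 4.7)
The plan is to show that every regular $D$-sequence termination point $X$ is isomorphic, as a termination point, to the given well founded one $Z$; since any two such $X$ and $Y$ would then both be isomorphic to $Z$, they would be isomorphic to each other. By Lemma~\ref{lem:wf_fp_strong}, $Z$ is strong, and by Theorem~\ref{thm:d_sequence_fp_is_1_fp} it corresponds to a $1$-fixed point with collapse $\pi_Z \colon Z \to D(Z)$ satisfying the full range condition.

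Given a regular termination point $X$ with sequence $(A^D_x)_{x \in X}$, set $\pi_X(x) := D(\iota^X_{\restr{X}{x}})(A^D_x)$. Conditions~(1) and~(3) of Definition~\ref{def:soa_d_sequence_fixed_point} imply that $\pi_X$ is an embedding, that the relation $x \prec y :\Leftrightarrow x \in \supp_X(\pi_X(y))$ carries a height function $h_X$, and that $G_0(\pi_X(x)) <_{\fin} \pi_X(x)$ holds for every $x$. I would then construct an embedding $f \colon X \to Z$ satisfying $\pi_Z \circ f = D(f) \circ \pi_X$ by $\Sigma^0_1$-recursion on $h_X$: given $f$ on elements of strictly lower height, the support of $\pi_X(x)$ already lies in the domain of $f$, so $D(f)(\pi_X(x)) \in D(Z)$ is defined and satisfies $G_0^Z <_{\fin}$ (because $\pi_X(y) < \pi_X(x)$ for $y \in \supp_X(\pi_X(x))$, an inequality preserved by $f$); the range condition on $Z$ then lets us set $f(x) := \pi_Z^{-1}\bigl(D(f)(\pi_X(x))\bigr)$. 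That $f$ is an embedding and a $D$-sequence homomorphism follows in the usual fashion.

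The main difficulty is surjectivity of $f$. Given $z \in Z$, I would proceed by induction on $h_Z(z)$. By the inductive hypothesis, $\supp_Z(\pi_Z(z)) \subseteq \rng(f)$, so there is a unique $\hat{\sigma} \in D(X)$ with $D(f)(\hat{\sigma}) = \pi_Z(z)$, and it suffices to show $\hat{\sigma} \in \rng(\pi_X)$. The range condition on $Z$ together with order-preservation by $f$ gives $G_0^X(\hat{\sigma}) <_{\fin} \hat{\sigma}$. Condition~(2) of regular then supplies some $x \in X$ with $\hat{\sigma} \leq \pi_X(x)$; and for any such $x$, $G_0^X(\hat{\sigma}) <_{\fin} \hat{\sigma} \leq \pi_X(x)$ forces $\supp_X(\hat{\sigma}) <_{\fin} x$ automatically. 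If the inequality $\hat{\sigma} \leq \pi_X(x)$ is strict, then pulling $\hat{\sigma}$ back to an element of $D(\restr{X}{x})$ strictly below $A^D_x$ and invoking condition~(1) yields some $y < x$ with $\hat{\sigma} \leq \pi_X(y)$, whence the same argument gives $\supp_X(\hat{\sigma}) <_{\fin} y$ and we may iterate.

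The key obstacle is ensuring the descent terminates in $\rca_0$. I would iterate it via primitive recursion, halting at the first stage where equality $\hat{\sigma} = \pi_X(y_n)$ occurs. Should the recursion never halt, it produces an infinite descending sequence $x = y_0 > y_1 > y_2 > \cdots$ in $X$; its image under the already-constructed embedding $f$ is an infinite descending sequence in $Z$, contradicting well-foundedness. Hence the descent terminates at some $y_n$ with $\pi_X(y_n) = \hat{\sigma}$ and therefore $f(y_n) = z$, completing the proof that $f$ is an isomorphism of $D$-sequence termination points.
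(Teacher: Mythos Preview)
Your proof is correct, but it takes a genuinely different route from the paper's. Both approaches begin identically: construct an embedding $f$ from an arbitrary regular termination point $X$ into a strong (hence $1$-fixed-point) target by recursion along the height function $h_X$, using the range condition on the target to invert $\pi$. The divergence is in how the argument concludes.

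The paper never attempts to show $f$ is surjective. Instead, it observes that the target ($\psi_1(D)$, equivalently your $Z$) must be well founded---because the hypothesised well founded termination point is strong (Lemma~\ref{lem:wf_fp_strong}) and strong termination points are unique (Corollary~\ref{cor:strong_fp_unique}). The embedding $f$ then transports well-foundedness back to $X$, so $X$ is itself strong, and a second appeal to Corollary~\ref{cor:strong_fp_unique} finishes. This is short and conceptual: it reduces everything to the already-established uniqueness of \emph{strong} termination points, at the cost of invoking that corollary twice.

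Your approach instead proves surjectivity of $f$ directly, by an explicit descent in $X$ whose termination is forced by well-foundedness of $Z$ via $f$. This is more hands-on and yields the isomorphism $f$ explicitly, without relying on Corollary~\ref{cor:strong_fp_unique}. The descent step works exactly as you describe: from $\hat\sigma < \pi_X(y_n)$ and $\supp_X(\hat\sigma) <_{\fin} y_n$ one pulls $\hat\sigma$ back into $D(\restr{X}{y_n})$, and condition~(1) supplies $y_{n+1} < y_n$; the search for $y_{n+1}$ halts because condition~(1) guarantees a witness, so the sequence $(y_n)$ is computable and its $f$-image contradicts well-foundedness of $Z$ if equality never occurs. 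One small point worth making explicit in $\rca_0$: the outer induction on $h_Z(z)$ is $\Sigma^0_1$ (the statement ``$z \in \rng(f)$''), which is available, and within each step the finitely many lower-height preimages needed to form $\hat\sigma$ can be found by bounded search since the support is finite.
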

\begin{proof}
	Let $(\psi_1(D), \pi)$ be the canonical $1$-fixed point of $D$. Let $(X, (A^D_x)_{x \in X})$ be any regular $D$-sequence termination point with height function $h: X \to \n$. Let $\pi_+: \psi^+_1(D) \to D(\psi^+_1(D))$ be the extension of $\pi$ such that $\pi_+$ is an isomorphism. We define a map $f: X \to \psi_1^+(D)$ using recursion along $h$:
	\begin{equation*}
		f(x) := \pi^{-1}_+(D(f_S)(\sigma))\comma
	\end{equation*}
	where $S := \supp_{\restr{X}{x}}(A^D_x)$ holds, $f_S: S \to \psi_1^+(D)$ is the restriction of $f$ on $S$, and where $\sigma \in D(S)$ is the unique element with $D(\iota_S^{\restr{X}{x}})(\sigma) = A^D_x$.
	
	Given two elements $x, y \in X$, we prove along $h(x) + h(y)$ that $x < y$ implies $f(x) < f(y)$.	
	Let $S := \supp_{\restr{X}{x}}(A^D_x)$ and $T := \supp_{\restr{X}{y}}(A^D_y)$. Also, we need $\sigma \in D(S)$ and $\tau \in D(T)$ with $D(\iota_S^{\restr{X}{x}})(\sigma) = A^D_x$ and $D(\iota_T^{\restr{X}{y}})(\tau)$. From $x < y$, we have $D(\iota^X_{\restr{X}{x}})(A^D_x) < D(\iota^X_{\restr{X}{y}})(A^D_y)$, which implies $D(\iota_S^{S \cup T})(\sigma) < D(\iota_T^{S \cup T})(\tau)$.	
	By induction hypothesis, we know that $f_{S \cup T}$ is already defined and that it is an embedding. We conclude
	\begin{equation*}
		D(f_S)(\sigma) = D(f_{S \cup T} \circ \iota_S^{S \cup T})(\sigma) < D(f_{S \cup T} \circ \iota_T^{S \cup T})(\tau) = D(f_T)(\tau)\comma
	\end{equation*}
	where the inequality holds since $D(f_{S \cup T})$ is an embedding. Moreover, $\pi^{-1}_+$ is also an embedding and, hence, we arrive at $f(x) < f(y)$.
	
	Next, we show that the range of $f$ is a suborder of $\psi_1(D)$. Again, this is done using induction along the height of an element $x \in X$: If we assume the claim for all elements of smaller height, we immediately have $\supp_{\pi_+(f(x))} \subseteq S \subseteq \psi_1(D)$, where $S$ is the support of $A^D_x$ and the second suborder relation follows from induction. Moreover, every element in $S$ is smaller than $x$. Using the fact that $f$ is an embedding, we conclude $G_0(f(x)) <_{\fin} f(x)$ and, thus, $f(x) \in \psi_1(D)$.
	
	Recall that $D$ has a well founded $D$-sequence termination point (that is strong by Lemma \ref{lem:wf_fp_strong}). Furthermore, there is a strong $D$-sequence termination point induced by $\psi_1(D)$ using Theorem \ref{thm:d_sequence_fp_is_1_fp}. Since strong $D$-sequence termination points are unique by Corollary \ref{cor:strong_fp_unique}, we conclude that $\psi_1(D)$ is well founded. Now, this transports to $X$ via $f$. Finally, this implies that $(X, (A^D_x)_{x \in X})$ is also strong. Now that any two regular $D$-sequence termination points are already strong, they are also isomorphic via Corollary \ref{cor:strong_fp_unique}.
\end{proof}
Finally, for the direction (c) to (a) of Theorem~\ref{thm:pica_uniqueness_regular_fixed_point}, we use Theorem~\ref{thm:pica_wf_fp} together with the following proposition:
\begin{proposition}[$\rca_0$]\label{prop:if_D_sequence_non_iso_fp_linear_orders}
	Given a (pre)dilator $D$ with an ill founded regular $D$-sequence termination point, we can construct a (pre)dilator $E$ with two regular $E$-sequence termination points on two linear orders such that already these linear orders are not isomorphic.
\end{proposition}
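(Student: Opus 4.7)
The plan is to exploit the ill-foundedness of $X$ via Remark~\ref{rem:cond_1_cases} to locate a ``plateau'' in the $D$-sequence, and then to perform surgery on it to produce two regular $E$-sequence termination points with non-isomorphic underlying linear orders.

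First I would extract the plateau. Because $X$ is ill-founded and regular, it cannot simultaneously be strong: if it were, Corollary~\ref{cor:strong_fp_unique} together with Theorem~\ref{thm:d_sequence_fp_is_1_fp} would identify $X$ (even as a linear order) with the canonical strong termination point in a way that interacts poorly with the ill-founded descending chain. Failing condition~(4) of Definition~\ref{def:soa_d_sequence_fixed_point} for $X$, I pick $\sigma \in D(X)$ whose set of covering indices has no minimum. The resulting infinite descending chain of covering elements, combined with the finiteness of $\supp(\sigma)$ and Remark~\ref{rem:cond_1_cases}, forces cofinitely many of these elements to be \emph{exact} hits---that is, I may thin the chain to an infinite descending sequence $(y_n)_{n \in \mathbb{N}}$ in $X$ with $\sigma = D(\iota_{\restr{X}{y_n}}^{X})(A^D_{y_n})$ for every $n$.

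Second, taking $E := D$ (or, if necessary for distinguishability, a mild refinement such as $E(Y) := 1 + D(Y)$ that adds a marker below), I would exhibit two regular $E$-sequence termination points. The first is $X$ itself with its original sequence. The second, $X'$, is obtained by surgery at the plateau: removing a sparse subset of the $y_n$ (for instance the odd-indexed ones, or inserting a parallel copy of the plateau) while keeping the inherited value $A^D_x$ on every surviving $x$. I would verify conditions~(1)--(3) of Definition~\ref{def:soa_d_sequence_fixed_point} for $X'$: regularity~(3) is inherited from the height function on $X$; condition~(2) follows from the original coverage of $D(X)$ together with the survival of a plateau witness for $\sigma$; and condition~(1) follows from Remark~\ref{rem:cond_1_cases}, which is the central tool---any candidate value strictly below $A^D_x$ at a surviving $x$ would demand a witness either among the retained exact hits or inside the finite support $\supp(\sigma)$, both controlled by the surgery.

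The main obstacle is engineering the surgery so that $X$ and $X'$ are genuinely non-isomorphic as linear orders while both remain regular $E$-sequence termination points. The ill-founded plateau supplies the freedom to cut without violating the recursion, since all plateau positions map to the same $\sigma$; but distinguishing $X$ and $X'$ up to isomorphism requires either identifying a cofinal invariant preserved by the cut (for example, a change in the local order type at the plateau) or carrying a marker via the switch from $D$ to $E$. Remark~\ref{rem:cond_1_cases} is what keeps the verification of condition~(1) short, by rigidly characterizing which candidate minimal values can arise at each step.
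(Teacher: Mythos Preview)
Your proposal has genuine gaps at several load-bearing points.

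First, the opening move fails: an ill-founded regular $D$-sequence termination point \emph{can} be strong. Corollary~\ref{cor:strong_fp_unique} only says strong termination points are unique up to isomorphism; if $X$ is ill-founded and strong, this merely forces the canonical $\psi_1(D)$ to be ill-founded as well, which is no contradiction. Remark~\ref{rem:z_fp_all_iso} exhibits exactly this situation for the constant-$\z$ predilator: the canonical $1$-fixed point is $\z$ itself, ill-founded yet strong. So you cannot in general extract a failure of condition~(4), and the whole ``plateau'' step never gets off the ground.

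Second, even granting a descending chain of covering indices, your thinned sequence cannot satisfy $\sigma = D(\iota_{\restr{X}{y_n}}^{X})(A^D_{y_n})$ for every~$n$: that quantity is exactly $\pi(y_n)$ under the correspondence of Theorem~\ref{thm:d_sequence_fp_is_1_fp}, and $\pi$ is injective. Remark~\ref{rem:cond_1_cases} yields, for a fixed $\sigma < A^D_x$, a witness $y < x$ that is either an exact hit \emph{or} lies in the finite support of $\sigma$; it does not produce infinitely many distinct exact hits for a single~$\sigma$.

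Third, and most decisively, the paper notes immediately after the proof that taking $E := D$ is known to fail for predilators (Remark~\ref{rem:z_fp_all_iso}: every $D$-sequence termination point for constant-$\z$ is isomorphic to~$\z$). This is precisely why the paper passes to the auxiliary predilator $E(X) := (D(X) + 1 + D(X)) \times \omega$. The factor $\times\omega$ guarantees that every element of $\psi_1(E)$ has a successor; the middle ``$+1$'' provides a distinguished element; and the second copy of $D$ hosts an imported descending sequence $(\tau_n)$ above that element. The second termination point is then built in stages $X_n$ so that everything above the distinguished element lies above some $\tau_n$, hence the distinguished element has no successor. The non-isomorphism of the two underlying orders is then witnessed by a first-order property (existence of an element without successor), which your surgery on a single order type cannot manufacture. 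Remark~\ref{rem:cond_1_cases} is indeed the tool that keeps the verification of condition~(1) for the layered order short---on that point your instinct is right---but it enters in the paper's construction, not in a removal-from-$X$ argument.
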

\begin{proof}
	Given the predilator $D$, consider the following predilator $E$:
	\begin{equation*}
		E(X) := (D(X) + 1 + D(X)) \times \omega
	\end{equation*}
	for linear orders $X$. To be more precise, $E$ maps any such $X$ to the linear order consisting of tuples $\langle 0, \sigma, n \rangle$, $\langle 1, n \rangle$, and $\langle 2, \sigma, n \rangle$ for $\sigma \in D(X)$ and $n \in \n$, ordered lexicographically. Given $f: X \to Y$, the resulting morphism $E(f)$ maps these tuples to $\langle 0, f(\sigma), n \rangle$, $\langle 1, n \rangle$, and $\langle 2, f(\sigma), n \rangle$, respectively. Clearly, if $D$ is a dilator, then so is $E$.
	
	Consider the canonical $E$-sequence termination point given by $\psi_1(E)$ together with a map ${\pi: \psi_1(E) \to E(\psi_1(E))}$ using Theorem \ref{thm:d_sequence_fp_is_1_fp}. In $\psi_1(E)$, any element has a successor. This can either be seen directly from the definition (the multiplication ``$\times \omega$'' ensures this property) or more formally as follows:
	Consider, w.l.o.g., ${\sigma := \langle 0, \sigma', n \rangle \in \pi(\psi_1(E))}$. Clearly, $\tau := \langle 0, \sigma', n + 1 \rangle$ is greater than $\sigma$ and, since it shares its support with $\sigma$, we have $\tau \in \pi(\psi_1(E))$. Assume that we find some $\rho \in \pi(\psi_1(E))$ with $\sigma < \rho < \tau$. The latter inequality implies that $\rho$ is of the form $\langle 0, \rho', m \rangle$. From $\sigma < \rho$, we conclude $\sigma' \leq \rho'$ and, similarly from $\rho < \tau$, we have $\rho' \leq \sigma'$. Thus, $\rho' = \sigma'$ Finally, this implies $n < m < n + 1$, which cannot be. The arguments for $\sigma$ of any of the other possible two forms work similar.
	
	For the rest of the proof, we want to construct another $E$-sequence termination point that contains an element without successor. Since this property is preserved by isomorphisms, the underlying order of this termination point cannot be isomorphic to $\psi_1(E)$.
	
	By Proposition \ref{prop:wf_fp_implies_all_regular_fp_iso}, we know that the existence of an ill founded regular $D$-sequence termination point also makes the canonical $D$-sequence termination point $\psi_1(D)$ with ${\kappa: \psi_1(D) \to D(\psi_1(D))}$ ill founded. Thus, we have a descending sequence $(\sigma_n)_{n \in \n} \subseteq \kappa(\psi_1(D))$. We want to transport this descending sequence to the \emph{second} copy of $D$ in $E$. For this, we begin by considering the following general embedding $f$ from $\kappa(\psi_1(D))$ into $\pi(\psi_1(E))$:
	\begin{equation*}
		f(\sigma) := E(f)(\langle 0, \sigma, 0 \rangle)\period
	\end{equation*}
	Of course, to be precise, we have to consider $f$ with codomain $E(\psi^+_1(E))$. Also, we define $f(\sigma)$ recursively using $E(f_S)(\sigma')$ for $S := \supp_{\psi_1(D)}(\sigma)$ and the unique $\sigma' \in E(S)$ with $E(\iota_S^{\psi_1(E)})(\sigma') = \langle 0, \sigma, 0 \rangle$, where $f_S: S \to \psi^+_1(E)$ is defined like $f$ on $S$. After this, we prove that $f$ is an embedding. Finally, we show that $f$ already maps into $\pi(\psi_1(E))$. We only sketch these steps as they are very similar to those in the proof of Proposition \ref{prop:wf_fp_implies_all_regular_fp_iso}.
	
	The descending sequence in the second copy of $D$ in the predilator $E$ is then given by ${(\tau_n)_{n \in \n} \subseteq \psi_1(E)}$ with $\pi(\tau_n) := \langle 2, \sigma_n, 0 \rangle$ for any $n \in \n$. Clearly, each such element lives in $\psi_1(E)$ since the support elements are all in the first copy of $D$ in $E$ and, hence, smaller than the sequence element itself.
	
	Now, we define the underlying order of our second $E$-sequence termination point. The order $X$ is given by the following sequence of suborders $(X_n)_{n \in \n}$ of $\psi_1(E)$:
	\begin{itemize}
		\item $X_0$ contains all elements $\sigma \in \psi_1(E)$ of the form $\pi(\sigma) = \langle 0, \sigma', m\rangle$ or ${\pi(\sigma) = \langle 1, 0 \rangle}$. It is our goal to construct $X$ in such a way that this latter element does not have a successor.
		\item $X_{n}$ for $n > 0$ contains all elements $\sigma \in \psi_1(E)$ of the form $\pi(\sigma) = \langle 2, \sigma', m \rangle$ such that
		\begin{itemize}
			\item $\sigma \notin X_{n'}$ for all $n' < n$,
			\item $\sigma > \tau_n$,
			\item $\supp_{\psi_1(E)}(\pi(\sigma))$ is a finite suborder of $\bigcup_{n' < n} X_n'$.
		\end{itemize}
	\end{itemize}
	With this, we can define $X := \{(n, \sigma) \mid \sigma \in X_n \text{ and } n \in \n\}$. Let $g: X \to \psi_1(E)$ be the injection given by $g((n, \sigma)) := \sigma$ for all $(n, \sigma) \in X$. The order on $X$ is given by the order on $\psi_1(E)$, i.e., $x < y$ holds if and only if $g(x) < g(y)$ does for any $x, y \in X$. By definition, the orders $X_n$ are all pairwise disjoint, which entails that $g$ is injective and that the order defined on $X$ is anti-symmetric (and thus linear).
	Notice that for any $n \in \n$, there is some $m \in \n$ with $(m, \tau_n) \in X$. This is because $\tau_n > \tau_k$ holds for all $k > n$ and all elements in the support of $\tau_n$ are included in~$X_0$.
	
	Let $(A^E_\sigma)_{\sigma \in \psi_1(E)}$ be the canonical $E$-sequence on $\psi_1(E)$. We define the $E$-sequence $(B^E_x)_{x \in X}$ on $X$ as follows:
	\begin{equation*}
		E(\restr{g}{x})(B^E_x) := A^E_{g(x)}
	\end{equation*}
	for all $x \in X$. The existence of such a $B^E_x$ can be argued as follows: Let $x \in X$ be arbitrary. Informally, any element in $X$ is only constructed using, again, elements from $X$. By definition of $X$, we therefore have that $\supp_{\psi_1(E)}(\pi(g(x))) \subseteq \rng(g)$ holds. Since $A^E_{g(x)}$ has the same support as $\pi(g(x))$, we can apply the support condition of our $1$-fixed point $\psi_1(E)$ and find some $B^E_x$ satisfying our claim.
	Since $E(\restr{g}{x})$ is an embedding, $B^E_x$ must be unique.

	Let us now verify that this sequence $(B^E_x)_{x \in X}$ satisfies all requirements of a regular $E$-sequence.
	\begin{enumerate}[leftmargin=*, label=(\arabic*)]
		\item Let $x \in X$ be arbitrary. The fact that $B^E_x$ is greater than any $E(\iota_{\restr{X}{y}}^{\restr{X}{x}})(B^E_y)$ for $y < x$ reflects from the similar property of $(A^E_\sigma)_{\sigma \in \psi_1(E)}$. Now, we only have to show that each $B^E_x$ is the lowest bound in $E(\restr{X}{x})$ with this property. Assume that for some $x \in X$, there is such a counterexample, i.e., we have $\sigma \in E(\restr{X}{x})$ with $\sigma < B^E_x$ and $E(\iota_{\restr{X}{y}}^{\restr{X}{x}})(B^E_y) < \sigma$ for all $y < x$. From $E(\restr{g}{x})(\sigma) < E(\restr{g}{x})(B^E_x) = A^E_{g(x)}$, we conclude that there is some element $\gamma \in \psi_1(E)$ with $\gamma < g(x)$ and $E(\iota_{\restr{\psi_1(E)}{\gamma}}^{\restr{\psi_1(E)}{g(x)}})(A^E_\gamma) \geq E(\restr{g}{x})(\sigma)$. If $B^E_x$ was of the form $\langle 0, \tau, m\rangle$ or $\langle 1, 0 \rangle$, then we know by definition of $X$ that there must be some $y \in X$ with $g(y) = \gamma$. Moreover, we have $y < x$ since $g$ is an embedding and
		\begin{align*}
			(E(\restr{g}{x}) \circ E(\iota_{\restr{X}{y}}^{\restr{X}{x}}))(B^E_y) &= (E(\iota_{\restr{\psi_1(E)}{g(y)}}^{\restr{\psi_1(E)}{g(x)}}) \circ E(\restr{g}{y}))(B^E_y)\\
			&= E(\iota_{\restr{\psi_1(E)}{g(y)}}^{\restr{\psi_1(E)}{g(x)}})(A^E_{g(y)}) \geq E(\restr{g}{x})(\sigma)\period
		\end{align*}
		Since $E(\restr{g}{x})$ is an embedding this contradicts the assumption on $\sigma$.
		
		The only case left is that $B^E_x$ is of the form $\langle 2, \tau, m\rangle$. Here, there might not be a $y \in X$ with $g(y) = \gamma$.
		Assume that this is the case since some element in the internal support set of $\gamma$ (i.e., the support of $A^E_\gamma$) is not in the range of $g$. W.l.o.g., we can assume that $\gamma$ has been chosen like in Remark \ref{rem:cond_1_cases}. This remark tells us that either, $\gamma$ satisfies the equality $E(\iota_{\restr{X}{\gamma}}^{\restr{X}{g(x)}})(A^E_\gamma) = E(\restr{g}{x})(\sigma)$, which leads to the contradiction $\supp_{\restr{\psi_1(E)}{\gamma}}(A^E_\gamma) = g(\supp_{\restr{X}{x}}(\sigma)) \subseteq \rng(g)$.
		Or, that $\gamma$ lies in the support of $E(\restr{g}{x})(\sigma)$. Thus, $\gamma$ is in the range of $g$. From the contradiction in both cases, we conclude that the support of $A^E_\gamma$ lies in the range of $g$. The definition of $X$, therefore, entails that $\gamma \leq \tau_n$ holds for all $n \in \n$. We know that there is some $n \in \n$ with $\tau_n < g(x)$. As previously discussed, $\tau_n$ is in the range of $g$, i.e., there is some $z \in X$ with $g(z) = \tau_n$. Also, $\tau_n < g(x)$ entails $z < x$. We put everything together:
		\begin{align*}
			(E(\restr{g}{x}) \circ E(\iota_{\restr{X}{z}}^{\restr{X}{x}}))(B^E_z) &= E(\iota_{\restr{\psi_1(E)}{\tau_n}}^{\restr{\psi_1(E)}{g(x)}})(A^E_{\tau_n})\\
			&\geq E(\iota_{\restr{\psi_1(E)}{\tau_n}}^{\restr{\psi_1(E)}{g(x)}} \circ \iota_{\restr{\psi_1(E)}{\gamma}}^{\restr{\psi_1(E)}{\tau_n}})(A^E_{\gamma})\\
			&= E(\iota_{\restr{\psi_1(E)}{\gamma}}^{\restr{\psi_1(E)}{g(x)}})(A^E_{\gamma}) \geq E(\restr{g}{x})(\sigma)\period
		\end{align*}
		We conclude that $B^E_z$ contradicts our assumption on $\sigma$.
		\item Let $\sigma \in E(X)$ be arbitrary. We need to find $x \in X$ with $E(\iota_{\restr{X}{x}}^X)(B^E_x) \geq \sigma$. W.l.o.g., we can assume that $E(g)(\sigma)$ is greater than $\pi(\tau_0)$. Then, the only reason for there not being an $x \in X$ with $E(\iota_{\restr{X}{x}}^X)(B^E_x) = \sigma$ can be that $E(g)(\sigma)$ is not in the range of $\pi$. This entails the existence of $\tau \in \psi_1(E)$ with $\tau \in \supp_{\psi_1(E)}(E(g)(\sigma))$ and $\pi(\tau) \geq E(g)(\sigma)$. Since $\tau$ is in the support of $E(g)(\sigma)$, we conclude that there is some $x \in X$ with $g(x) = \tau$. Finally, this implies
		\begin{align*}
			E(g \circ \iota_{\restr{X}{x}}^X)(B^E_x) &= E(\iota_{\restr{\psi_1(E)}{g(x)}}^{\psi_1(E)} \circ (\restr{g}{x}))(B^E_x)
			\\&= E(\iota_{\restr{\psi_1(E)}{g(x)}}^{\psi_1(E)})(A^E_{g(x)})\\
			&= \pi(g(x)) = \pi(\tau) \geq E(g)(\sigma)\period
		\end{align*}
		Since $E(g)$ is an embedding, we have $E(\iota_{\restr{X}{x}}^X)(B^E_x) \geq \sigma$.
		\item For the required height function, we can simply copy the one from $\psi_1(E)$.
	\end{enumerate}
	In order to conclude this proof, we only need to show that, in contrast to $\psi_1(E)$, we can find some element in $X$ without successor. This element is given by $x \in X$ with $\pi(g(x)) = \langle 1, 0 \rangle$. Assume that $y$ is a successor of $x$ in $X$. By definition of $X$, we know that $g(y) > \tau_n$ holds for some $n \in \n$. Moreover, we have $\tau_n \in \rng(g)$ and $\tau_n > g(x)$. Thus, $y$ cannot be the successor of $x$. Finally, we conclude that with $\psi_1(E)$ and $X$, we have found two linear orders on which you can define $E$-sequences such that $\psi_1(E)$ and $X$ are not isomorphic.
\end{proof}

It is not clear whether this can be improved to $E := D$ for dilators $D$. For predilators $D$, we can find an easy counterexample:

\begin{remark}\label{rem:z_fp_all_iso}
	Let $D$ be the dilator that constantly maps to the ill founded order~$\z$. (Note that morphisms $D(f)$ for $f: X \to Y$ are always the identity function since all supports are empty for this predilator). Clearly, $\z$ itself is a $1$-fixed point of $D$ and, thus, a strong $D$-sequence termination point using Theorem \ref{thm:d_sequence_fp_is_1_fp}. In the following, we will show that for this predilator any two $D$-sequence termination points are isomorphic, i.e., they do not even have to be regular and the isomorphism is not only restricted to the underlying linear orders.
	
	Assume a second $D$-sequence termination point given by some linear order $X$ together with $(A^D_x)_{x \in X}$. This sequence by itself is an embedding from $X$ to $\z$. We now show that it must also be surjective: Assume that there is some $z \in \z$ such that $A^D_x \neq z$ for any $x \in X$. By requirement (2) of $D$-sequence termination points, there is, however, some element $x \in X$ with $A^D_x > z$. Assume that $x$ is chosen such that $A^D_x$ has the smallest value with this property. (This is possible since $\z$ restricted to values above $z$ is well founded and this property is reflected by $x \mapsto A^D_x$.) We conclude that there is no $y \in X$ with $A^D_y = A^D_x - 1$ since such a value would either be equal to $z$ or a smaller candidate than $x$. We have $A^D_y < A^D_x$ for any $y < x$. Assume $A^D_y \geq A^D_x - 1$ for some $y < x$. Either, $A^D_y = A^D_x - 1$ holds, which cannot be because of our considerations from before, or $A^D_y > A^D_x - 1$ holds, which entails $A^D_y \geq A^D_x$ and is, thus, a contradiction to $A^D_y < A^D_x$ for all $y < x$. We conclude that $A^D_x - 1$ is a smaller value than $A^D_x$ with the property that $A^D_y < A^D_x - 1$ holds for any $y < x$. This contradicts requirement (1) of $D$-sequence termination points. Finally, this means that our embedding $x \mapsto A^D_x$ from $X$ to $\z$ is an isomorphism. This is even a $D$-sequence isomorphism since we have
	\begin{equation*}
		D(\restr{f}{(\restr{X}{x})})(A^D_x) = A^D_x = f(x) = B^D_{f(x)}
	\end{equation*}
	for any $x \in X$, where $f: X \to Y$ with $f(x) := A^D_x$ is our isomorphism and $(B^D_z)_{z \in \z}$ with $B^D_z := z$ is the sequence associated with our trivial $D$-sequence termination point on $\z$.
\end{remark}
We conclude this paper with a remark on how strongly the amount of linear orders and actual termination points can differ for regular $D$-sequences. In order to avoid any considerations on how to deal with uncountability in second order arithmetic, we argue in set theory.
\begin{remark}
	The predilator $D$ that constantly maps to $\q$ has uncountably many pairwise non-isomorphic regular $D$-sequence termination points, but the underlying linear orders of any two $D$-sequence termination points are isomorphic. (Similar to Remark~\ref{rem:z_fp_all_iso}, morphisms $D(f)$ for $f: X \to Y$ are always the identity.)
	
	For the first claim, let $r \in \rr$ be some arbitrary real number. We define
	\begin{equation*}
		X := \q \setminus [r, r + 1]\period
	\end{equation*}
	The sequence that makes this a termination point is given by the identity, i.e., we have $(A^D_q)_{q \in X}$ with $A^D_q := q$. Let us show that this results in a regular $D$-sequence termination point. Let $q \in X$ be arbitrary. Since $X$ is an open subset of $\q$, the element $q$ is the supremum of all $p \in X$ with $p < q$. Thus, requirement $(1)$ is satisfied. For $(2)$, let $q \in \q$ be arbitrary. Clearly, we can find a rational $p \in \q$ that is greater than both $q$ and $r + 1$. We conclude $q < p \in X$. Finally, requirement $(3)$ is clear since all supports are empty.
	
	Now, consider two distinct reals $r_1, r_2 \in \rr$. Let $X$ with $(A^D_x)_{x \in X}$ be constructed as above for $r_1$, and let $Y$ with $(B^D_y)_{y \in Y}$ be constructed as above for $r_2$. We can find some rational $q \in X \setminus Y$. Then, a $D$-sequence isomorphism $f: X \to Y$ from the former termination point to the latter one would imply $D(\restr{f}{(\restr{X}{q})})(A^D_q) = A^D_q = q \neq f(q) = B^D_{f(q)}$, where the inequality simply holds because $q$ is not an element of $Y$ and, thus, cannot be mapped to by $f$. We conclude that there are uncountably many pairwise non-isomorphic regular $D$-sequence termination points.
	
	For the second claim, i.e., that the underlying linear orders of any two $D$-sequence termination points are isomorphic, we simply show for any possible termination point on $X$ with sequence $(A^D_x)_{x \in X}$, that $X$ is non-empty, dense, and has no end points. The usual back-and-forth argument then yields an isomorphism with $\q$.
	
	For simplicity, let us identify $X$ with its range with respect to $x \mapsto A^D_x$. This means, we assume $X \subseteq \q$ and $A^D_q = q$ for any $q \in X$. By requirement $(2)$, we immediately have that $X$ cannot be empty. Now, assume that $X$ contains two rationals $p < q$. From requirement $(1)$, we know that $q$ must be the limit of some strictly increasing sequence in $X$. Thus, we can find a third rational $r \in X$ with $p < r < q$. Assume that $X$ has a right end point $q \in X$. Requirement $(2)$ tells us that there must be some element in $X$ greater or equal to $q + 1$, which contradicts this immediately. Assume that $X$ has a left end point $q \in X$. Like before, $q$ is the limit of some strictly increasing sequence in $X$. It can, thus, not be a left end point. Using the classical back-and-forth-argument, we conclude that $X$ is isomorphic to~$\q$.
\end{remark}

\bibliographystyle{amsplain}
\bibliography{goodstein}

\providecommand{\bysame}{\leavevmode\hbox to3em{\hrulefill}\thinspace}
\providecommand{\MR}{\relax\ifhmode\unskip\space\fi MR }
\providecommand{\MRhref}[2]{%
  \href{http://www.ams.org/mathscinet-getitem?mr=#1}{#2}
}
\providecommand{\href}[2]{#2}
\begin{thebibliography}{10}

\bibitem{Abrusci87}
V.~Michele Abrusci, \emph{Dilators, generalized {G}oodstein sequences,
  independence results: a survey}, Logic and combinatorics ({A}rcata, {C}alif.,
  1985), Contemp. Math., vol.~65, Amer. Math. Soc., Providence, RI, 1987,
  pp.~1--23.

\bibitem{AGV90}
V.~Michele Abrusci, Jean-Yves Girard, and Jacques Van~de Wiele, \emph{Some uses
  of dilators in combinatorial problems. {II}}, J. Symbolic Logic \textbf{55}
  (1990), no.~1, 32--40.

\bibitem{aguilera2022functorial}
Juan~P. Aguilera, Fedor Pakhomov, and Andreas Weiermann, \emph{Functorial
  {F}ast-{G}rowing {H}ierarchies}, 2022, arXiv:2201.04536, pp.~1--15.

\bibitem{ADWW20}
Toshiyasu Arai, David Fern\'{a}ndez-Duque, Stanley Wainer, and Andreas
  Weiermann, \emph{Predicatively unprovable termination of the {A}ckermannian
  {G}oodstein process}, Proc. Amer. Math. Soc. \textbf{148} (2020), no.~8,
  3567--3582.

\bibitem{Buchholz86}
W.~Buchholz, \emph{A new system of proof-theoretic ordinal functions}, Ann.
  Pure Appl. Logic \textbf{32} (1986), no.~3, 195--207.

\bibitem{Buchholz2017}
Wilfried Buchholz, \emph{A survey on ordinal notations around the
  {B}achmann-{H}oward ordinal}, Feferman on foundations (Gerhard J{\"a}ger and
  Wilfried Sieg, eds.), Outst. Contrib. Log., vol.~13, Springer, Cham, 2017,
  pp.~71--100.

\bibitem{Cichon83}
E.~A. Cichon, \emph{A short proof of two recently discovered independence
  results using recursion theoretic methods}, Proc. Amer. Math. Soc.
  \textbf{87} (1983), no.~4, 704--706.

\bibitem{Fodor56}
G.~Fodor, \emph{Eine {B}emerkung zur {T}heorie der regressiven {F}unktionen},
  Acta Sci. Math. (Szeged) \textbf{17} (1956), 139--142.

\bibitem{Freund18Higher}
Anton Freund, \emph{A {H}igher {B}achmann-{H}oward {P}rinciple}, 2018,
  arXiv:1704.01662, pp.~1--65.

\bibitem{Freund18PhD}
\bysame, \emph{{T}ype-{T}wo {W}ell-{O}rdering {P}rinciples, {A}dmissible
  {S}ets, and ${\Pi^1_1}$-{C}omprehension}, Ph.D. thesis, University of Leeds,
  3 2018, \url{http://etheses.whiterose.ac.uk/20929/}.

\bibitem{FreundCategorical}
\bysame, \emph{A categorical construction of {B}achmann-{H}oward fixed points},
  Bull. Lond. Math. Soc. \textbf{51} (2019), no.~5, 801--814.

\bibitem{Freund19}
\bysame, \emph{{$\Pi_1^1$}-comprehension as a well-ordering principle}, Adv.
  Math. \textbf{355} (2019), paper no. 106767, 65.

\bibitem{Freund20}
\bysame, \emph{Computable aspects of the {B}achmann-{H}oward principle}, J.
  Math. Log. \textbf{20} (2020), no.~2, paper no. 2050006, 26.

\bibitem{FreundPredicative}
\bysame, \emph{Predicative collapsing principles}, J. Symb. Log. \textbf{85}
  (2020), no.~1, 511--530.

\bibitem{FreundSecond}
\bysame, \emph{Impredicativity and {T}rees with {G}ap {C}ondition: {A} {S}econd
  {C}ourse on {O}rdinal {A}nalysis}, 2022, arXiv:2204.09321, pp.~1--34.

\bibitem{FreundPatterns}
\bysame, \emph{Patterns of resemblance and {B}achmann-{H}oward fixed points},
  Selecta Math. (N.S.) \textbf{28} (2022), no.~1, paper no. 19, 32.

\bibitem{FreundKruskalFriedman}
\bysame, \emph{Reverse mathematics of a uniform {K}ruskal-{F}riedman theorem},
  2022, arXiv:2112.08727, pp.~1--25.

\bibitem{Freund22}
\bysame, \emph{Bachmann-{H}oward derivatives}, Arch. Math. Logic \textbf{62}
  (2023), no.~5-6, 581--618.

\bibitem{FR21}
Anton Freund and Michael Rathjen, \emph{Well ordering principles for iterated
  {$\Pi^1_1$}-comprehension}, Selecta Math. (N.S.) \textbf{29} (2023), no.~5,
  paper no. 76, 83.

\bibitem{FRW22}
Anton Freund, Michael Rathjen, and Andreas Weiermann, \emph{Minimal bad
  sequences are necessary for a uniform {K}ruskal theorem}, Adv. Math.
  \textbf{400} (2022), paper no. 108265, 44.

\bibitem{FriedmanHirst}
Harvey~M. Friedman and Jeffry~L. Hirst, \emph{Weak comparability of well
  orderings and reverse mathematics}, Ann. Pure Appl. Logic \textbf{47} (1990),
  no.~1, 11--29.

\bibitem{Girard81}
Jean-Yves Girard, \emph{{$\Pi \sp{1}\sb{2}$}-logic. {I}. {D}ilators}, Ann.
  Math. Logic \textbf{21} (1981), no.~2-3, 75--219.

\bibitem{Goodstein44}
R.~L. Goodstein, \emph{On the restricted ordinal theorem}, J. Symbolic Logic
  \textbf{9} (1944), 33--41.

\bibitem{Hirst94}
Jeffry~L. Hirst, \emph{Reverse mathematics and ordinal exponentiation}, Ann.
  Pure Appl. Logic \textbf{66} (1994), no.~1, 1--18.

\bibitem{KP82}
Laurie Kirby and Jeff Paris, \emph{Accessible independence results for {P}eano
  arithmetic}, Bull. London Math. Soc. \textbf{14} (1982), no.~4, 285--293.

\bibitem{Marcone96}
Alberto Marcone, \emph{On the logical strength of {N}ash-{W}illiams' theorem on
  transfinite sequences}, Logic: from foundations to applications
  ({S}taffordshire, 1993) (Wilfrid Hodges, Martin Hyland, Charles Steinhom, and
  John Truss, eds.), Oxford Sci. Publ., Oxford Univ. Press, New York, 1996,
  pp.~327--351.

\bibitem{MM09}
Alberto Marcone and Antonio Montalb\'{a}n, \emph{On {F}ra\"{i}ss\'{e}'s
  conjecture for linear orders of finite {H}ausdorff rank}, Ann. Pure Appl.
  Logic \textbf{160} (2009), no.~3, 355--367.

\bibitem{MM11}
\bysame, \emph{The {V}eblen functions for computability theorists}, J. Symbolic
  Logic \textbf{76} (2011), no.~2, 575--602.

\bibitem{rathjen99}
Michael Rathjen, \emph{The realm of ordinal analysis}, Sets and Proofs
  (S.~Barry Cooper and John~K. Truss, eds.), London Mathematical Society
  Lecture Note Series, Cambridge University Press, 1999, pp.~219--280.

\bibitem{Rathjen15}
\bysame, \emph{Goodstein's theorem revisited}, Gentzen's centenary (Reinhard
  Kahle and Michael Rathjen, eds.), Springer, Cham, 2015, pp.~229--242.

\bibitem{sep-proof-theory}
Michael Rathjen and Wilfried Sieg, \emph{{Proof Theory}}, The {Stanford}
  Encyclopedia of Philosophy (Edward~N. Zalta and Uri Nodelman, eds.),
  Metaphysics Research Lab, Stanford University, {F}all 2023 ed., 2023,
  \url{https://plato.stanford.edu/archives/fall2023/entries/proof-theory/}.

\bibitem{RW93}
Michael Rathjen and Andreas Weiermann, \emph{Proof-theoretic investigations on
  {K}ruskal's theorem}, Ann. Pure Appl. Logic \textbf{60} (1993), no.~1,
  49--88.

\bibitem{RW11}
\bysame, \emph{Reverse mathematics and well-ordering principles}, Computability
  in context (S~Barry Cooper and Andrea Sorbi, eds.), Imp. Coll. Press, London,
  2011, pp.~351--370.

\bibitem{Simpson09}
Stephen~G. Simpson, \emph{Subsystems of second order arithmetic}, second ed.,
  Perspectives in Logic, Cambridge University Press, Cambridge; Association for
  Symbolic Logic, Poughkeepsie, NY, 2009.

\bibitem{Uftring23}
Patrick Uftring, \emph{Weak and strong versions of effective transfinite
  recursion}, Ann. Pure Appl. Logic \textbf{174} (2023), no.~4, paper no.
  103232, 15.

\end{thebibliography}
\end{document}